
\documentclass{article}

\usepackage{authblk}
\usepackage[utf8]{inputenc}
\usepackage{amssymb, amsthm, amsmath, tcolorbox}
\usepackage{enumitem}
\usepackage{comment}

\PassOptionsToPackage{hyphens}{url}\usepackage[backref,colorlinks,citecolor=blue,bookmarks=true]{hyperref}

\theoremstyle{definition}
\newtheorem*{theorem*}{Theorem}
%% this allows for theorems which are not automatically numbered
\newtheorem*{definition*}{Definition}

\newtheorem{theorem}{Theorem}[section]
\newtheorem{lemma}[theorem]{Lemma}
\newtheorem{proposition}[theorem]{Proposition}
\newtheorem{claim}[theorem]{Claim}
\newtheorem{remark}[theorem]{Remark}

\newtheorem{example}[theorem]{Example}

\newtheorem{corollary}[theorem]{Corollary}
\newtheorem{definition}[theorem]{Definition}

\newtheorem{problem}{Problem}

\newcommand{\F}{\mathbb{F}}

\newcommand{\C}{\mathbb{C}}
\newcommand{\sub}{\subseteq}
\newcommand{\sm}{\setminus}
\newcommand{\setmid}{\,\vert\,}

\renewcommand{\a}{\alpha}

\newcommand{\g}{\gamma}
\newcommand{\e}{\epsilon}
\renewcommand{\d}{\delta}

\DeclareMathOperator{\Span}{Sp}
\DeclareMathOperator{\Image}{Im}
\DeclareMathOperator*{\Exp}{\mathbb{E}}

\DeclareMathOperator{\rk}{rk}

\DeclareMathOperator{\bias}{bias}
\DeclareMathOperator{\udeg}{udeg}
\DeclareMathOperator{\ch}{char}
\DeclareMathOperator{\Poly}{Poly}
\DeclareMathOperator{\Forms}{Form}

\DeclareMathOperator{\Z}{Z}

\DeclareMathOperator{\Supp}{Supp}
\DeclareMathOperator{\rank}{rank}
\DeclareMathOperator{\codim}{codim}
\DeclareMathOperator{\Tr}{Tr}

\newcommand{\X}{\vec{X}}
\newcommand{\Y}{\vec{Y}}

\newcommand{\B}{\vec{B}}

\newcommand{\y}{\vec{y}}
\newcommand{\z}{\vec{z}}
\renewcommand{\P}{\vec{P}}
\newcommand{\U}{\vec{U}}

\let\vec\mathbf

\title{A weak regularity lemma for polynomials}
%\author{}
\author[1]{Guy Moshkovitz\footnote{G.~M. was partially supported by NSF Award DMS-2302988, NSF CAREER award DMS-2544490, and a PSC-CUNY award.  
D.~W. was supported as a graduate student in this research by the first grant.}}
\author[2]{Dora Woodruff}
\affil[1]{City University of New York}
\affil[2]{Massachusetts Institute of Technology}

\date{}

\begin{document}

\maketitle
\begin{abstract}
    A regularity lemma for polynomials provides a decomposition in terms of a bounded number of approximately independent polynomials. Such regularity lemmas play an important role in numerous results, yet suffer from the familiar shortcoming of having tower-type bounds or worse. In this paper we design a new, weaker regularity lemma with strong bounds. The new regularity lemma in particular provides tools for quantitatively studying the curves contained in the image of a polynomial map, which is beyond the reach of standard rank methods.

    The weak regularity lemma turns out to be powerful enough to yield results on arithmetic circuits and polynomial ranks that may be of independent interest:
    \begin{itemize}
        \item A general upper bound on the arithmetic circuit size of low-degree polynomial maps based solely on their image: if the image avoids curves of degree below $u$ then there is an arithmetic circuit of size $n^{\lfloor d/u \rfloor + o(1)}$, a power-saving bound compared to the typical $n^{d-o(1)}$ bound for degree-$d$ polynomials.
        \item An upper bound on the top fan-in of depth-4 arithmetic formulas under similar conditions.
        \item A quantitative bound for the Green-Tao notion of rank for polynomials, significantly improving on a result of Karam.
    \end{itemize}
\end{abstract}

\section{Introduction}

Regularity lemmas for polynomials reduce the analysis of polynomials in many variables to the analysis of a bounded number of approximately independent polynomials.
They have served as a key component in a variety of important results, as in the proof of the finite-field Gowers inverse conjecture in additive combinatorics and higher-order Fourier analysis (e.g.~\cite{GreenTao09,TaoZi12}),
%~\cite{Gowers01,GreenTao09,BergelsonTaoZi10,TaoZi10,TaoZi12} 
%(high Gowers norm implies correlation with a low-degree polynomial phase), 
Stillman's conjecture in commutative algebra~\cite{AnanyanHo20}, 
%(for any ideal of $n$-variate polynomials, the ``projective dimension'' can be bounded uniformly in $n$), 
and the Reed-Muller weight distribution problem in coding theory~\cite{BhowmickLo15,KaufmanLoPo12},
%(determining the number of Reed-Muller codewords of any given weight), 
to name but a few.
They are a key component in higher-order arithmetic regularity lemmas (e.g., \cite{GowersWo11, Green06, GreenTao09, HatamiLo11, KaufmanLo08}).\footnote{Roughly speaking, arithmetic regularity lemmas approximate general functions in terms of low-degree polynomials that are approximately independent.
(This approximate independence is indispensable; 
to quote Green~\cite{Green06}: second-order arithmetic regularity lemmas ``reduce the study of general functions...\ to the study of projections...\ onto quadratic factors. This, however, is of little consequence unless we can study those supposedly simple objects.'')}
%... conjecture in commutative algebra, in higher-order Fourier analysis and additive combinatorics, and in number theory CITE, among others.
As is usually the case with regularity lemmas, they unfortunately only have weak bounds, 
which are typically inherited by applications.
%and therefore so does any result that relies on them. 
In this paper we introduce a weaker regularity lemma for polynomials, with far better bounds, that is nevertheless powerful enough for some applications.
For example, our results yield a method to prove \emph{upper} bounds for arithmetic circuits, as well as give strong bounds for a result of Karam~\cite{Karam23} on the relation between the Green-Tao rank of a polynomial and its range. 

%For example, we obtain strong bounds for a result of Karam~\cite{Karam23} on the relation between the generalized rank of a polynomial and its range. 
%Our results also yield a method to prove \emph{upper} bounds for arithmetic circuits.

Roughly speaking, the usual regularity lemmas for polynomials (see, e.g., Proposition 8.1 in~\cite{ErmanSaSn19}, Lemma~2.4 in~\cite{GreenTao09}, or Lemma 3.11 in \cite{Green06})
can be described as results about polynomial decompositions. Namely, they express any $n$-variate polynomial $P\colon \F^n \to \F$---and more generally, any polynomial map $\P \colon \F^n \to \F^m$---in terms of a bounded number (independent of $n$) of polynomials $\X=(X_1,\ldots,X_k)$, that is, $P=F(\X)$, with the property that the polynomials in $\X$ ``behave'' approximately like $k$ (jointly independent) variables.
%Theorem 4.11 in Hatami-Lovett.
%
Although the precise definition of ``behave'' can differ depending on the context, the intuition is as follows: for many purposes, any polynomial %, regardless of its number of variables $n$,
can be analyzed as though it had only $k$ variables.
%, even as $n$ may be arbitrarily large.
%See, for example, the nice survey~\cite{ErmanSaSn19} for a commutative algebra perspective.
The number $k$ of parts $X_1,\ldots,X_k$ in the decomposition depends only on the degree $d$ and the number $m$ of the given polynomials $\P=(P_1,\ldots,P_m)$; as mentioned above, this dependency---befitting a regularity lemma---can be quite astronomical, often having worse than a tower-type growth (see, e.g., the Remark following Lemma~2.4 in~\cite{GreenTao09}).\footnote{Interestingly, recent regularization results of Lampert and Ziegler~\cite{LampertZi24,Lampert23} give strong bounds; however, unlike regularity lemmas, these do not provide a decomposition of $\P$ as a polynomial in $X_1,\ldots,X_k$, but rather place $\P$ in the ideal generated by them.} 
A main reason for these weak bounds is that in order to guarantee approximate independence, 
%in the proofs, 
the polynomial parts $X_1,\ldots,X_k$, as well as all their linear combinations, are required 
to have \emph{rank} (see Section~\ref{sec:WR}) that is large as a function of $k$, their number.
Thus, the larger the number of parts, the more difficult it is to satisfy the rank requirement, which translates to an extremely large number of steps in the process to construct them.\footnote{See, e.g., Proposition~8.1 in~\cite{ErmanSaSn19} for a clean example of this phenomenon: the reason that the process terminates at all is a decrease in the lexicographic order of the tuple of degrees of $\X$ in each step, even as the tuples get longer and longer.}
%(in other words, it does not provide a small algebra containing $\P$, but rather a small ideal).}

In the setting of graphs, Frieze and Kannan~\cite{FriezeKa99} (see also~\cite{CF12,FLZ17}) famously obtained a weak  regularity lemma that has both strong bounds and multiple interesting applications. 
For comparison, the usual graph regularity lemma of Szemer\'edi~\cite{Sze78} gives a decomposition of the edge set---induced by a vertex partition---into
%For comparison, the usual graph regularity lemma of Szemer\'edi decomposes the edge set of any graph---induced by a vertex partition---into 
a bounded number of parts (independent of the number of vertices),
with the guarantee that almost all graph parts
%graphs induced by the parts 
are ``approximately'' random.
%``behave'' like random graphs.
The weak graph regularity lemma decomposes any graph in a similar sense, %as the usual regularity lemma,
%one again %decomposes any graph, by partitioning the vertices; 
but comes with only one, global guarantee. %replaced by a single global conditions.
Our weak regularity lemma for polynomials is partly inspired by this approach.

\subsection{Weak regularity for polynomials}

The basic idea of our weak regularity lemma is to relax the requirement on the $k$ polynomials $X_1,\ldots,X_k$, and only require that at least one of them, of maximal degree, behaves like a free variable with respect to all the others. 
That is, conditioned on $X_2,\ldots,X_k$ taking any given value $y$, we require that $X_1$ takes any value in $\F$ with approximately the same frequency (see Remark~\ref{remark:conditional}). 
This imitates the case of a genuinely free variable; or more concretely, the case where $X_1=x_1$, and $X_i=X_i(x_2,\ldots,x_n)$ for $i>1$, where $x_1,\ldots,x_n$ are variables.
%(so the  $\F[x_1,\ldots,x_k]$.

Intuitively, the benefit
of having a weak regular decomposition for a polynomial map $\P$ 
%of the weak regularity lemma
%of a decomposition of $\P$ into such polynomials $X_1,\ldots,X_k$ 
is that it reveals some information about %the ``univariate structure'' of $\P$; that is, about 
the univariate polynomials ``induced'' by $\P$.
%obtained by fixing all but one of its variables.
In particular, it enables us to argue about the image of $\P$ (for which standard rank-based tools are unsuitable, as they typically give information only about equidistribution);
indeed, a corollary of the weak regularity lemma will be that lines that 
$\Image(\X)$ intersects sufficiently often
%$\X$ intersects somewhat often 
%line that $\X$ significantly intersects 
%sufficiently ``popular'' line, mapped into by $\X$ a sizable fraction of the inputs, 
%according to the distribution of the map $\X$, 
must in fact be fully contained in the image of $\X$. This translates to a polynomial curve (univariate polynomial map) contained in the image of $\P$. 
See below for more about applications. 

We note that throughout the paper we work with finite fields $\F$, for which the notion of weak regularity can be most elementarily defined. Moreover, rank-based methods, which we rely on extensively in the proof (as do proofs of other regularity lemmas for polynomials), tend to be most challenging in the finite-field setting. See Remark~\ref{remark:inf-field} for more on the case of infinite fields.

Formally, we define weak regularity as follows.
We refer the reader to Section~\ref{sec:WR} for a more formal discussion, and the proofs.
For tuples of polynomials $\P=(P_1,\ldots,P_m)$ and $\X=(X_1,\ldots,X_k)$ over $\F$, we write $\P \sub \F[\X]$ if every component $P_i$ of $\P$ can be written as $F_i(X_1,\ldots,X_k)$ for some polynomial $F_i$.
%the set of all polynomials generated by $\X$ (that is, of the form 
Henceforth, for any $k$-tuple $v=(v_1,\ldots,v_k)$, we use the notation $v'$ for the $(k-1)$-tuple $v'=(v_2,\ldots,v_k)$.

\begin{definition*}[weak regularity]
    A tuple of polynomials $\P$ over $\F=\F_q$
    has a \emph{weak $\e$-regular decomposition} of size $k$
    if $\P \sub \F[\X]$, for some $k$ homogeneous polynomials $\X=(X_1,\ldots,X_k)$,
    %if $\P=\vec{F}(\X)$, for a polynomial map $\vec{F}$ and forms $\X=(X_1,\ldots,X_k)$, 
    such that:
    \begin{itemize}
        \item for every $y \in \F^k$,
    %\[|\Pr[(X_1,\X')=(y,y')]-\Pr[\X'=y']/q| \le \e q^{-r},\] 
    %$\forall y \in \F^k \colon 
    %$|\Pr[\X=y]-q^{-1}\Pr[\X'=y']| \le (\e/q)^{k}$,
    $|\Pr[\X=y]-q^{-1}\Pr[\X'=y']| \le \e q^{-k}$,
    %where $\X'=(X_2,\ldots,X_r)$ and $y'=(y_2,\ldots,y_r)$;
    \item $\P \nsubseteq \F[\X']$, and
    \item $\deg(X_1) = \max_i \deg(X_i)$.
    \end{itemize}
    % such that for every $y \in \F^k$,
    % \[|\Pr[\X=y]-q^{-1}\Pr[\X'=y']| \le \e q^{-k},\] 
    % and moreover, $\P \nsubseteq \F[\X']$
    % %$\vec{F}$ depends on its first variable, 
    % and $\deg(X_1) = \max_i \deg(X_i)$.
\end{definition*}

The probabilities here are with respect to a uniformly random input; that is, $\Pr[\X=y]$ is a shorthand for $\Pr_{x \in \F^n}[\X(x)=y]$, with $x$ chosen uniformly at random from $\F^n$.
Intuitively, the probabilistic condition can be motivated as follows. If $X_1$ were truly an independent variable $x_1$, with $X_i = X_i(x_2, \ldots, x_n)$ for $i > 1$, we would certainly have both $\Pr[X_1 = y_1] = \frac{1}{q}$ and $\Pr[\X = y] = \Pr[X_1 = y_1]\Pr[\X' = y']$. Hence, the probabilistic condition mimics the behavior we would expect for an independent variable $x_1$.

Note that the two additional conditions at the end of the definition are necessary to avoid trivialities.
This is clearly the case for the first, which guarantees that the decomposition of $\P$ depends on $X_1$.
As for the other additional condition, that $X_1$ is of maximal degree---note that we can decompose any polynomial $P=P(x_1,\ldots,x_n)$, of degree $d$ in $x_1$ say, 
as $\sum_{i=0}^d x_1^i C_i(x_2,\ldots,x_n)$;
%$C_0(x_2,\ldots,x_n)+x_1C_1(x_2,\ldots,x_n)+\cdots+x_1^dC_d(x_2,\ldots,x_n)$,
that is, we always have a small decomposition $P=F(x_1,C_0,\ldots,C_{d})$ (where 
%$F(z, y_0,y_1,\ldots,y_d) = y_0 + zy_1 + z^2y_2 + \cdots + z^d y_d$
$F(z, y_0,y_1,\ldots,y_d) = \sum_{i=0}^d z^i y_i$).
Although $x_1$ is genuinely independent of the other parts of the decomposition $C_0,\ldots,C_d$, its degree $\deg(x_1)=1$ is generally far from the largest degree among them.

Let us also mention two extreme cases in the definition of weak regularity.
First, if $\P$ is a tuple of forms that, viewed as a map $\P \colon \F^n \to \F^m$, is already equidistributed, that is, $|\Pr[\P=y]-q^{-m}| \le \e q^{-m}$ for every $y \in \F^m$,
then $\P$ trivially has a small weak $2\e$-regular decomposition.\footnote{$|\Pr[\P=y]-q^{-1}\Pr[\P'=y']|
\le |\Pr[\P=y]-q^{-m}|+q^{-1}|\Pr[\P'=y']-q^{-(m-1)}| \le 2\e q^{-k}$,
so can take $\X=\P$.}
Second, $\P$ always 
%every polynomial map $\P \colon \F^n \to \F^m$ 
has the trivial decomposition $\P=\P(\X)$ with $\X=(x_1,\ldots,x_n)$ its variables, which is a weak $0$-regular (i.e., a perfectly weak regular decomposition; assuming without loss of generality that $\P$ depends on $x_1$), though of a trivially large size $n$.

Our main result is that a rather small weak regular decomposition always exists (see Theorem~\ref{thm:WRL}).

\begin{theorem}[weak regularity lemma]\label{thm:WRL-into}
    Let $\e=q^{-c}$ with $c > 1$.
    %Let $\e > 0$.
    Every polynomial $m$-tuple
    $\P$ over $\F=\F_q$, of degrees at most $d$
    with $d < \ch(\F)$,
    has a weak $\e$-regular decomposition
    %for $\e=q^{-r}$ with $r \ge 1$,
    of size at most 
    $(mc)^{2^{d(1+o(1))}}$.\footnote{Henceforth, the $o(1)$ term in $d(1+o(1))$ depends only on $d$, going to $0$ as $d$ goes to infinity.}
\end{theorem}

For comparison, the usual regularity lemmas for polynomials only give tower-type bounds or worse; namely, the bounds are at the very least a tower of exponents, of height depending on $d$, with $m$ at the top. Our weak regularity lemma, in particular, is polynomial in $m$ (which is especially relevant in the important case of low-degree polynomials $d=O(1)$).

The proof of our weak regularity lemma relies on several components, including a combinatorial regularization process over ``punctured'' spaces 
%``pencils'' 
of polynomials, 
%---departing from the usual regularization proofs---
and a Fourier-analytic quasirandomness argument over such spaces. See Section~\ref{subsec:proof} for more information.

\subsection{Applications for arithmetic circuits}

Using the weak regularity lemma, one can prove results that go beyond standard notions of structure for polynomials such as rank or partition rank. These standard notions only give ``shallow'' decompositions---their building blocks are generally just one degree lower---whereas we will be interested in decompositions that go ``deeper''.
Concretely, in the next two sections we present applications that give novel structural results for polynomials based solely on their image.

Inspired by~\cite{Karam23}, we make the following definition for polynomial maps (that is, tuples of polynomials).
Define the \emph{univariate degree} of a polynomial map $\P\colon \F^n \to \F^m$ %(given by a tuple of polynomials $\P=(P_1,\ldots,P_m)$) %(that is, a polynomial map $\P\colon \F^n \to \F^m$) 
to be the smallest degree of a non-constant polynomial curve contained in its image: %(as a polynomial map $\P\colon \F^n \to \F^m$):
\[\udeg(\P)=\min\{\deg(\U) \setmid \U \colon \F\to\F^m \text{ non-constant} \colon \Image\U \sub\Image\P\}.\]
%By convention, if $\P$ is constant then $\udeg(\P) = \infty$.
Thus, $\udeg(\P)>u$ means that $\Image(\P)$ contains no non-constant polynomial curve (image of a univariate polynomial map) of degree at most $u$.
Note that $1\le \udeg(\P) \le \deg(\P)$ ($:= \max_i \deg(P_i))$ for any non-constant $\P$\footnote{Indeed, if $\P(u)\neq \P(v)$ then $\U(t)=\P(u+t(v-u))$ is non-constant and $\deg(\U)\le \deg(\P).$} (see also~\cite{Karam23} and Section~\ref{subsec:ud-properties}).
Intuitively, for a single polynomial, univariate degree quantifies (lack of) surjectivity; more generally, $\udeg(\P)$ quantifies the ``complexity'' of the curves found in the image of $\P$, starting from being line-free (or, over $\F_3$, being a cap set) which corresponds to $\udeg(\P)>1$.

We note that univariate degree is weaker than standard notions of variety-evasiveness~\cite{DL12,DKL14}, as it only forbids containment of an entire low-degree curve, rather than large intersections with every such curve. 
Thus, applications requiring large $\udeg(\P)$, as we give in this paper, require an even weaker condition on $\Image(\P)$ than being variety evasive.
We discuss the connection with subspace- and variety-evasive sets in Section~\ref{subsec:ud-properties}.

Let $L(P)$ denote the arithmetic formula size\footnote{That is, the smallest number of leaves in an arithmetic formula computing $P$.} of a polynomial $P$.
It is known that for most degree-$d$ polynomials $P$ in $n$ variables, over any field, $L(P) \ge \binom{n+d}{d}^{1-o(1)}$. Indeed, a nice argument of Hrube\v{s} and Yehudayoff (Theorem~3.7 in~\cite{HY11}) shows that this holds even when restricted to polynomials with coefficients in $\{0,1\}$.
In fact, a similar conclusion\footnote{This traces back to Sto{\ss}~\cite{Sto89}, at least in the univariate case. See also Lovett~\cite{Lov11}.} holds for arithmetic circuit size $S(P)$.
We denote more generally by $S(\P)$ the (multi-output) arithmetic circuit size\footnote{That is, the smallest number of $+$ and $\times$ gates of fan-in $2$ in an arithmetic circuit computing simultaneously all components of $\P$.} of a polynomial map $\P=(P_1,\ldots,P_m) \colon \F^n \to \F^m$.

We use the weak regularity lemma to show that for low-degree polynomials, 
%,and polynomial maps, 
high univariate degree %---in fact, $2$ is enough---
implies the existence of formulas and circuits that are considerably smaller than the typical $n^{d-o(1)}$, with a power-saving bound.\footnote{As usual, the $o(1)$ term, here and below, goes to $0$ as $n$ goes to infinity.}

\begin{theorem}\label{thm:formula-ub}%[see Theorem REF]
    Let $\F$ be a finite field with $\ch(\F)>d$, and let $d \le \frac12\log_2\log_2(n)$.
    For any $n$-variate polynomial $P$ over $\F$ with $\deg(P) \le d$ and $\udeg(P) = u$,
    \[L(P) \le  n^{\lfloor d/u \rfloor + o(1)}.\]
    Moreover, for any polynomial map $\P \colon \F^n \to \F^m$ with $\deg(\P) \le d$, $\udeg(\P) = u$, 
    and $m \le 2^{\log(n)^{o(1)}}$, %= n^{1/\log_2^{1-o(1)}(n)}$,
    %$m \le n^{o(1/\log^{1/2}(n)))}$, then
    \[S(\P) \le n^{\lfloor d/u \rfloor + o(1)}.\]
\end{theorem}

% \begin{theorem}\label{thm:formula-ub}%[see Theorem REF]
%     Let $P$ be an $n$-variate polynomial of degree at most $d$ and univariate degree at least $u$,
%     over any finite field $\F$ with $\ch(\F)>d$.
%     If $d \le \frac12\log_2\log_2(n)$ then %and $m \le n^{o(1)}$,
%     \[L(P) \le  n^{\lfloor d/u \rfloor + o(1)}.\]
%     Moreover, for a polynomial map $\P \colon \F^n \to \F^m$ with $\deg(\P) \le d$ and $\udeg(\P) \ge u$ over $\F$, 
%     if also $m \le 2^{\log^{o(1)}(n)}$ then
%     %$m \le n^{o(1/\log^{1/2}(n)))}$, then
%     \[S(\P) \le  n^{\lfloor d/u \rfloor + o(1)}.\]
% \end{theorem}
%Pf: s(\P) < r*n^t < m^{2^d}*n^t < n^{t+o(1)}.

We note that any improvement to the upper bound in the weak regularity lemma would translate to a less stringent condition on $d$ in Theorem~\ref{thm:formula-ub} (e.g., a polynomial upper bound $d^C$ in Theorem~\ref{thm:WRL} would translate to the condition $d \le n^{o(1)}$).
%, for some $\e \le 1$).

Our machinery also has a direct implication for the study of depth-$4$ arithmetic formulas.
%to the study of arithmetic formulas in computational complexity theory, namely, depth-$4$ arithmetic formulas.
A depth-$4$ $\sum^{[r]}\prod^{[a]}\sum\prod^{[b]}$-formula (e.g.,~\cite{KayalSaSa14,KumarSa14})
for a polynomial $P$ is a decomposition of the form 
$P=\sum_{i=1}^r \prod_{j=1}^{a_i} Q_{i,j}$ with $a_i \le a$ and $\deg(Q_{i,j})\le b$. % each $Q_{i,j}$ is a polynomial of degree at most $b$
It is homogeneous if each $Q_{i,j}$ is homogeneous and all products $\prod_j Q_{i,j}$ are of the same degree.
An important challenge in algebraic complexity is to understand the interplay among the fan-in parameters $(r,a,b)$. 
Trivially, if the bottom fan-in $b$ is allowed to be as large as $\deg(P)$, then $P$  has a $\sum^{[r]}\prod^{[a]}\sum\prod^{[b]}$-formula with $r=a=1$.
%(and generally large size).
The question becomes interesting when the bottom fan-in $b$ is bounded away from $\deg(P)$.
Using the weak regularity lemma, we %nevertheless
%show that bounding $d$ away from $\deg(P)$ still allows 
deduce an \emph{upper bound} on the top fan-in $r$ from a lower bound on the univariate degree. (It is worth noting that the model of depth-$4$ homogeneous arithmetic formulas with top fan-in $r$ was shown by Kumar and Saraf~\cite{KumarSa14} to be already powerful for $r=\Theta(\log n)$.)

\begin{theorem}\label{thm:top-fan-in}%(see Theorem REF)
    Let $P \colon \F^n \to \F$ with $\deg(P) \le d$, over any finite field $\F$ with $d < \ch(\F)$.
	Suppose $P \in \F[\P]$ with $\P=(P_1,\ldots,P_m)$, $\deg(\P) \le d$, such that $\udeg(\P) = u$.
	Then there is a %homogeneous
	$\sum^{[r]}\prod^{[2u]}\sum\prod^{[d/u]}$-formula
	for $P$ with top fan-in $r \le (2m)^{2^{d(1+o(1))}}$. 
    Moreover, the formula is homogeneous %\footnote{That is, each $Q_{i,j}$ is homogeneous, and all products $\prod_j Q_{i,j}$ are of the same degree.} 
    if $P$ is.
\end{theorem}

Contrapositively, the above results can also be interpreted as implying a barrier for lower bound results: 
roughly, any strong enough lower bound method must somehow avoid applying to polynomial maps of high univariate degree.
It therefore seems that the notion of univariate degree, and more generally a qualitative understanding of the curves contained in the image of a polynomial map,
could be an important data point to consider when studying questions about arithmetic circuits.\footnote{
%\begin{remark}
    Univariate degree should not be confused with the concept of an elusive function of Raz~\cite{Raz10}, which is a polynomial map $\P$ satisfying $\Image\P\nsubseteq \Image\vec{\Gamma}$, for every polynomial map $\vec{\Gamma}$ with given parameters.	
    Indeed, univariate degree is about containment in the other direction: a lower bound on $\udeg(\P)$ means that $\Image\U \nsubseteq \Image\P$, for every polynomial curve $\U$ of a given degree.}

There are also connections with polynomial identity testing. For example, 
%Let us also mention here a possible connection to polynomial identity testing. 
a line of work by Oliveira, Sengupta, and others (e.g.,~\cite{OS24,GOS25}) uses a regularity lemma for polynomials to make  progress towards the goal of obtaining an efficient identity testing algorithm for depth-$4$ circuits with bounded top and bottom fan-ins.
%obtain efficient identity testing algorithms for depth-4 circuits with bounded top and bottom fan-ins. 
We leave it for future work to determine whether our weak regularity lemma could suffice for these and other applications of regularity in algebraic complexity theory.

\subsection{Applications for generalized ranks of polynomials}

Is there an algebraic explanation for non-surjectivity of (multivariate) polynomials?
An important class of non-surjective polynomials are those of the form $P=F(Q)$ with $1<\deg(Q)<\deg(P)$---sometimes called \emph{uni-multivariate decomposable}~\cite{GutierrezRuSe02}---for some non-surjective univariate $F$ (for example, $P=(Q(x_1,\ldots,x_n))^t$ with $\gcd(t,|\F|-1)>1$). 
For such a polynomial of degree $d$, it must be that $\deg(Q) \le d/2$ (as $\deg(F) \ge 2$).
We show that this is always the case, 
in the sense that every non-surjective polynomial can be written in terms of a rather small number of polynomials all of degree at most $d/2$ (despite most polynomials not being uni-multivariate decomposable~\cite{vonZi15}).
In fact, we also show a significant generalization of this to polynomial maps (note that the image of a polynomial map encodes information about the dependencies between its various polynomials).

Following Green and Tao (\cite{GreenTao09}, Definition~1.6), 
let $\rank_t(P)$, for a polynomial $P$ and $t \ge 0$, denote the least number of 
polynomials of degree at most $t$ such that $P$ can be expressed as a function of only them.
%forms of degree at most $t$ required to express $P$ as a polynomial in them.
Note that this extends naturally to polynomial maps $\P$.
%that generate $P$, that is, such that $P$
One easy-to-state application of the weak regularity lemma is that any polynomial that is not surjective, and more generally any polynomial map $\P \colon \F^n\to\F^m$ whose image $\Image\P$ ($\sub \F^m$) does not contain a line,
must have low %generalized rank 
$\rank_t(\P)$, for some $t$ bounded away from 
%the degree of $\P$.
$\deg(\P)$.
%:= \max_i \deg(P_i)$.

\begin{theorem*}[see Theorem~\ref{thm:surj}]
    Let $\P \colon \F^n \to \F^m$ be a polynomial map with $\deg(\P) \le d$, over any finite field $\F$ with $d < \ch(\F)$.
    If $\Image\P$ does not contain a line then
    \[\rank_{d/2}(\P) \le (2m)^{2^{d(1+o(1))}}.\]
\end{theorem*}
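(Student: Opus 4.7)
The plan is to apply the weak regularity lemma (Theorem~\ref{thm:WRL}) to $\P$ with $\e=q^{-r}$ for $r=2$, producing a decomposition $\P\sub\F[\X]$ of size $k\le(2m)^{2^{d(1+o(1))}}$. Since the lemma guarantees $\deg(X_1)=\max_i\deg(X_i)$, I split into two cases. If $\deg(X_1)\le d/2$, then every $X_i$ has degree at most $d/2$, so $\rank_{d/2}(\P)\le k$ and we are done. Otherwise $\deg(X_1)>d/2$, and the plan is to derive a contradiction by exhibiting a line in $\Image\P$.

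Assume $\deg(X_1)>d/2$. Each $P_i=F_i(\X)$ (taking $F_i$ of minimal weighted degree) can involve $X_1$ only linearly, since $2\deg(X_1)>d\ge\deg(P_i)$ and the $X_i$ are homogeneous. Accordingly, write $F_i=G_i+y_1\cdot H_i$ with $G_i,H_i\in\F[y_2,\ldots,y_k]$. If every $H_i(\X')$ vanished as a polynomial in the original variables, then $P_i=G_i(\X')\in\F[\X']$ for each $i$, contradicting $\P\nsubseteq\F[\X']$. So I fix $i$ with $H_i(\X')\not\equiv 0$; the weighted-degree bound then gives $\deg(H_i(\X'))\le d-\deg(X_1)<d/2<\ch(\F)\le q$.

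The crux is to find $y'\in\F^{k-1}$ satisfying both (i) $\Pr[\X'=y']>\e q^{-k+1}$ and (ii) $H_i(y')\ne 0$. Condition (i) ensures, via the probabilistic estimate in the definition of weak regularity, that $\Pr[\X=(y_1,y')]>0$ for every $y_1\in\F$, so the entire line $\{(y_1,y'):y_1\in\F\}$ lies in $\Image\X$. By a trivial union bound, the $y'$ failing (i) have total $\X'$-probability at most $\e$, while by Schwartz--Zippel applied to $H_i(\X')$ the $y'$ failing (ii) have total $\X'$-probability at most $(d/2)/q$. Since $\e+(d/2)/q<1$, a common $y'$ exists.

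For such a $y'$, the image of this line under $F=(F_1,\ldots,F_m)$ is $\{G(y')+y_1\cdot H(y'):y_1\in\F\}\sub\Image\P$, a genuine line in $\F^m$ since $H_i(y')\ne 0$ forces $H(y')\ne 0$. This contradicts the hypothesis that $\Image\P$ contains no line. The step I expect to require the most care is the weighted-degree bookkeeping underlying $\deg(H_i(\X'))<d/2$: this relies on choosing a representation $F_i$ of minimal weighted degree and on the homogeneity of the $X_i$ supplied by the weak regularity lemma, and must accommodate possible algebraic dependencies among the $X_i$ that could admit multiple representations of $P_i$.
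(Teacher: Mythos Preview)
Your proof is correct and follows essentially the same route as the paper. The paper obtains Theorem~\ref{thm:surj} as the special case $\udeg(\P)\ge 2$ of Theorem~\ref{thm:rk-bd}, but the underlying argument is the one you give: apply the weak regularity lemma, use Schwartz--Zippel on a coefficient polynomial $H_i(\X')$ to locate a popular fibre $y'$ on which $F$ depends nontrivially on $y_1$, and invoke the regularity estimate to show the full line $\{(y_1,y'):y_1\in\F\}$ lies in $\Image\X$. Your weighted-degree concern is handled in the paper by Lemma~\ref{lemma:semi-hom} (irredundant decompositions), and your choice $\e=q^{-2}$ in place of the paper's $\e=1-d/q$ is immaterial for the stated bound.
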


Note that just for the special case of $\F_3$, there is already a vast literature on subsets of $\F_3^m$ avoiding a line, also known as cap sets.
See Remark~\ref{remark:cap-sets} for more on the connection between cap sets and polynomial maps.

One can significantly extend this result by considering the univariate degree $\udeg(\P)$, which we recall is the smallest degree of a curve contained in the image of $\P$. 
We note that $\udeg(\P)=1$ if and only if
$\Image\P$ contains a line (and for $m=1$ this means surjectivity).

It is not hard to show that, as $n$ tends to infinity, we cannot hope to bound $\rank_t(P)$ for $t < \deg(P)/\udeg(P)$ (see, e.g., Section~\ref{subsec:ud-properties}).
Karam proved (see Theorem~1.3\footnote{More precisely, Karam's result is stated for the closely related notion of $\rk_t$; see Section~\ref{sec:Apps}.} in~\cite{Karam23}) that we indeed can for $t=\deg(P)/\udeg(P)$.
\begin{theorem}[\cite{Karam23}]
     Let $P \colon \F^n \to \F$ be a polynomial with $\deg(P) \le d$, over any prime finite field $\F=\F_p$ with $d < p$.
     For $t=\deg(P)/\udeg(P)$, we have $\rank_t(P) \le \gamma(p,d)$, for some function $\gamma$.
\end{theorem}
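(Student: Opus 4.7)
My strategy is to apply the paper's weak regularity lemma to the singleton tuple $(P)$, obtain a decomposition $P = F(X_1, \ldots, X_k)$ with $X_1$ of maximal part-degree $D$, extract from this decomposition a univariate curve in $\Image(P)$, and use the hypothesis $\udeg(P) = u$ to force $D \le d/u = t$. Then all $X_i$ have degree at most $t$, and the desired bound $\rank_t(P) \le k \le \gamma(p,d)$ is the one supplied by the weak regularity lemma.

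Concretely, I apply the lemma with, say, $\e = q^{-2}$. The probability condition implies that $\Pr[\X' = y'] > \e q^{1-k}$ for $y'$'s accounting for a $(1-\e)$-fraction of the probability mass, and for each such $y'$ the entire fiber $\{(y_1, y') : y_1 \in \F\}$ lies in $\Image(\X)$. Hence the univariate polynomial $U_{y'}(z) := F(z, y')$ has image contained in $\Image(P)$. Writing $F = \sum_{i=0}^{e} z_1^i g_i(z_2, \ldots, z_k)$ with $e := \deg_{z_1} F \ge 1$ (since $P \notin \F[\X']$) and $g_e \ne 0$, I claim $e \ge u$. Indeed, if every such $U_{y'}$ were constant, then since $\deg(U_{y'}) \le e < q$ each $U_{y'}$ would be constant as a polynomial, giving $g_i(y') = 0$ for all $i \ge 1$ and all $y'$ in the large set above; then $g_i \circ \X'$ would vanish on a $(1-\e)$-fraction of $\F^n$, so Schwartz--Zippel (using the degree bound $\deg(g_i \circ \X') \le d < q$) would yield $g_i \circ \X' \equiv 0$, forcing $P = g_0(\X') \in \F[\X']$ and contradicting the second condition of weak regularity. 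A non-constant $U_{y'}$ of degree $\le e < q$ thus supplies a non-constant curve in $\Image(P)$, giving $u = \udeg(P) \le \deg(U_{y'}) \le e$.

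It remains to deduce $eD \le d$. Monomials $\X^\alpha$ of distinct weighted degree $w(\alpha) := \sum_i \alpha_i \deg(X_i)$ contribute to distinct $\x$-homogeneous components of $F(\X)$, so any monomial of $F$ with $w(\alpha) > d = \deg(P)$ must cancel with others of the same weighted degree. An argument parallel to the curve-extraction above, applied to such a cancellation relation restricted to a typical fiber of $\X'$, should force the coefficients (viewed as weighted-homogeneous polynomials in $z_2, \ldots, z_k$) to vanish identically, ruling out cancellation. Consequently the weighted degree of $F$ is at most $d$; specializing to any monomial of $z_1$-degree $e$ gives $eD \le d$, hence $D \le d/e \le d/u = t$. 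Since $X_1$ has maximal part-degree, all $X_i$ have degree $\le t$, and $\rank_t(P) \le k \le \gamma(p,d)$. The main obstacle is making this final cancellation-free step fully rigorous: weak regularity directly controls only $X_1$ relative to $\X'$, while ruling out higher-degree algebraic relations among the $\X$ themselves seems to require either an iterative strengthening of the decomposition or a recursive application of the same curve-extraction mechanism to nested weak regular decompositions of $\X'$.
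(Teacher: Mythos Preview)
Your overall strategy---apply the weak regularity lemma, extract a univariate curve along the $X_1$-direction from a popular fiber of $\X'$, and use the maximality of $\deg X_1$ to bound $\deg(\X)$---matches the paper's proof of Theorem~\ref{thm:rk-bd} (whose $m=1$ case is Karam's theorem). The gap you flag at the end is real, but its resolution is much simpler than the recursion you propose, and the same idea simultaneously closes a second gap you did not flag: the unjustified bound $\deg(g_i \circ \X') \le d$ invoked in your Schwartz--Zippel step.

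The fix is to first replace $F$ by an $F$ of minimal support among all $F'$ with $F'(\X) = P$; the paper calls such a decomposition \emph{irredundant}. Because every $X_i$ is homogeneous (this is built into the definition of weak regularity), each monomial $\X^\alpha$ is itself homogeneous in $x$, of degree $w(\alpha) = \sum_i \alpha_i \deg X_i$. Hence the degree-$w$ homogeneous part of $P = F(\X)$ equals $\sum_{\alpha \in \Supp F,\, w(\alpha) = w} c_\alpha \X^\alpha$; for $w > d = \deg P$ this part vanishes, so all monomials of that weighted degree could be removed from $F$ without changing $F(\X)$, contradicting minimality. Thus $w(\alpha) \le d$ for every $\alpha \in \Supp F$ (this is Lemma~\ref{lemma:semi-hom} in the paper). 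Taking any monomial with $\alpha_1 = e$ yields $eD \le w(\alpha) \le d$, which is exactly your missing inequality; and each $g_i(\X')$ now visibly has degree at most $d - iD \le d$, legitimizing the Schwartz--Zippel application. No control over algebraic relations among the $X_i$ is required: irredundancy simply prevents $F$ from exploiting any such relation above degree $d$.
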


%the bound $\rk_t(P) \le O_{|\F|,\deg(P),t}(1)$ for $t = \deg(P)/\udeg(P)$, for any polynomial $P$ (see Section REF for a more precise statement).
The proof %in~\cite{Karam23} 
uses a variant of the usual regularity lemma for polynomials, and thus yields an extremely weak bound $\g(p,d)$ on $\rank_t(P)$.

We show that the weak regularity lemma
%in the special case of polynomials ($m=1$) 
is sufficiently powerful to bound $\rank_t$ for any polynomial, and in fact also for any polynomial map and over non-prime fields too,
yielding a correspondingly strong bound.
\begin{theorem*}[see Theorem~\ref{thm:rk-bd}]
    Let $\P \colon \F^n \to \F^m$ be a polynomial map with $\deg(\P) \le d$, over any finite field $\F$ with $d < \ch(\F)$.
    For $t=\deg(\P)/\udeg(\P)$, %we have
    %$u=\udeg(\P)$, we have 
    \[\rank_{t}(\P) \le (2m)^{2^{d(1+o(1))}}.\] 
    % For every polynomial map $\P\colon\F^n\to\F^m$, over a finite field with $d < \ch(\F)$
    % and with $u=\udeg(\P)$, we have 
    % \[\rk_{d/u}(\P) \le f(m,d).\]    
\end{theorem*}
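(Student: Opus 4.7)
The plan is to apply the weak regularity lemma to $\P$ and exploit the approximate freeness of the maximal-degree component $X_1$ to extract a low-degree univariate polynomial curve inside $\Image(\P)$; since $X_1$ dominates the degrees in the decomposition $\X$, bounding $\deg(X_1) \le t = d/\udeg(\P)$ bounds every $\deg(X_j)$ by $t$, and hence bounds $\rank_t(\P)$ by the size of the decomposition.

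First, apply the weak regularity lemma with $\e = q^{-2}$ to obtain $\P \sub \F[\X]$ with $\X = (X_1,\ldots,X_k)$ homogeneous, $X_1$ of maximal degree, $\P \nsubseteq \F[\X']$, and $k \le (2m)^{2^{d(1+o(1))}}$. Write $P_i = F_i(X_1,\ldots,X_k)$ and truncate $F_i$ to monomials $Y^\alpha$ of weighted degree $\sum_j \alpha_j \deg(X_j) \le d$ (legal because the $X_j$ are homogeneous and $\deg(P_i) \le d$, which forces the aggregate heavier contributions to vanish identically on $\X$); in particular $\deg_{Y_1}(F_i) \le d/\deg(X_1)$. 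Let $A = \{y' \in \F^{k-1} : \Pr[\X' = y'] > \e q^{-(k-1)}\}$, so $\Pr[\X' \in A] \ge 1-\e$, and for every $y' \in A$ and every $y_1 \in \F$ weak regularity yields $\Pr[\X = (y_1,y')] \ge q^{-1}\Pr[\X'=y'] - \e q^{-k} > 0$, placing the line $\{(y_1,y') : y_1 \in \F\}$ inside $\Image(\X)$. Consequently, for every $y' \in A$ the polynomial curve $\U_{y'}(T) := (F_1(T,y'),\ldots,F_m(T,y'))$ satisfies $\Image(\U_{y'}) \sub \Image(\P)$ and has degree at most $d/\deg(X_1)$.

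Suppose toward contradiction that $\deg(X_1) > t$. Then each $\U_{y'}$ (for $y' \in A$) has degree strictly less than $\udeg(\P)$, so by minimality in the definition of $\udeg$ each such $\U_{y'}$ must be constant as a function of $T$. Writing $F_i = \sum_{\ell \ge 0} G_\ell^{(i)}(Y') Y_1^\ell$, this means $\sum_{\ell \ge 1} G_\ell^{(i)}(y') T^\ell$ vanishes for all $T \in \F$, for every $y' \in A$ and every $i$; since the polynomial in $T$ has degree at most $d < \ch(\F) \le q$, every coefficient $G_\ell^{(i)}(y')$ with $\ell \ge 1$ must vanish. Hence $P_i(x) = G_0^{(i)}(\X'(x))$ whenever $\X'(x) \in A$, i.e., on at least a $(1-\e)$-fraction of $x \in \F^n$. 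Since $P_i - G_0^{(i)}(\X')$ is a polynomial in $x$ of degree at most $d$, Schwartz--Zippel (applicable since $\e = q^{-2} < 1 - d/q$) forces the identity $P_i = G_0^{(i)}(\X')$ for every $i$, giving $\P \sub \F[\X']$ and contradicting the weak regularity decomposition.

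The main obstacle is the case analysis at the end: the weak regularity lemma produces a curve in $\Image(\P)$ of the right degree but says nothing about its non-constancy. The resolution combines the maximal-degree choice of $X_1$ with the hypothesis $d < \ch(\F)$ to lift pointwise constancy in $T$ to formal vanishing of the coefficient polynomials $G_\ell^{(i)}$ on a large portion of $\F^{k-1}$, and then invokes Schwartz--Zippel with the slack afforded by $\e = q^{-2}$ to turn approximate identities over $\F^n$ into exact polynomial identities, finally reaching a contradiction with the clause $\P \nsubseteq \F[\X']$ of weak regularity.
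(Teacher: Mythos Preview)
Your proposal is correct and follows the same overall strategy as the paper: apply the weak regularity lemma, truncate each $F_i$ using the homogeneity of $\X$ so that every monomial has weighted degree at most $d$ (the paper formalizes this as making the decomposition \emph{irredundant}, Lemma~\ref{lemma:semi-hom}), extract curves $T \mapsto \vec{F}(T,y')$ of degree at most $d/\deg(X_1)$ inside $\Image(\P)$, and conclude $\deg(\X)=\deg(X_1)\le t$.

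The one notable difference is in how non-constancy of the curve is secured. The paper works directly: writing $F_j=\sum_\ell Y_1^\ell C_\ell$ for some $j$ with $\deg_1(F_j)>0$, it applies Schwartz--Zippel to the nonzero polynomial $C_\ell(\X')$ (for some $\ell\ge 1$) to locate a \emph{single} $y_0$ that is simultaneously popular, $\Pr[\X'=y_0]>(1-d/q)q^{-(r-1)}$, and makes the curve non-constant (Corollary~\ref{coro:nonconstant-line}); weak regularity with $\e=1-d/q$ then places the corresponding line in $\Image(\X)$. You instead argue by contradiction over \emph{all} popular $y'\in A$: if every such curve were constant, then $P_i$ agrees with $G_0^{(i)}(\X')$ on a $(1-\e)$-fraction of inputs, and Schwartz--Zippel in the $x$-variable forces the identity $\P\sub\F[\X']$, contradicting the decomposition. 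Both routes are valid; yours trades an explicit witness curve for a cleaner global argument, and your choice $\e=q^{-2}$ versus the paper's $\e=1-d/q$ is immaterial to the final bound.
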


Roughly speaking, our proof follows by showing that for any weak $\e$-regular decomposition $\P=\vec{F}(\X)$, for $\e$ bounded away from $1$, we have $\udeg(\P) \le \deg_1(\vec{F})$, the degree of $\vec{F}$ in its first variable
(so $\Image \U \sub \Image\P$ for some non-constant curve $\U$ with $\deg(\U) \le \deg_1(\vec{F})$).
%by finding a nonconstant curve $\U \colon \F\to\F^m$ with $\deg(\U)=\deg_1(\F)$ such that $\Image \U \sub \Image\P$.
Using the guarantees of the homogeneity of $\X$ and of the maximality of $\deg(X_1)$ ($=\deg(\X)$), 
and removing ``redundant'' monomials from $\vec{F}$,
we will deduce $\deg_1(\vec{F}) \le \deg(\P)/\deg(\X)$, that is, $\deg(\X) \le \deg(\P)/\udeg(\P) = t$, which would allow us to bound $\rank_{t}$.

%Finally, 

%\paragraph{The proof.}

\subsection{The proof}\label{subsec:proof}

Towards the proof of the weak regularity lemma, we obtain results on linear spaces of polynomials and forms (homogeneous polynomials) that may be of independent interest. In particular, we analyze subspaces of high-rank polynomials (meaning the rank is allowed to grow with the number of spanning polynomials, i.e., the dimension of the space).
%where low here is relative to the dimension of the polynomial space, 
More specifically, we analyze \emph{punctured} spaces of high-rank polynomials. By a punctured space we mean the complement of a linear subspace: $V \sm U$ where $V$ is a linear space (of polynomials) and $U$ is a strict subspace.\footnote{For example, a punctured space of codimension $1$ ($V\sm U$ with $\codim_V(U)=1$) 
is of the form $\{cw + u \setmid c \neq 0, u \in U\}$ for some $w \in V$.}
%$w \in V\sm U$.}
%\footnote{Equivalently, $V=(W\sm\{0\}) + U$ for a nontrivial direct sum $V=W\oplus U$.}
For comparison, the classical regularity lemmas for polynomials work only with the ``trivial'' punctured space $V\sm \{0\}$; or more concretely, \emph{all} nonzero linear combinations of the parts $\X$ are required to have high rank. %(here $V$ is the linear span of $\X$).
Obtaining a high-rank punctured space $V\sm U$, 
rather than a high-rank \emph{trivial} punctured space $V\sm\{0\}$,
%rather than a high-rank linear space of polynomials, 
turns out to be achievable with far better bounds.
%, which later enables the strong bounds in our weak reuglarity lemma.
Roughly speaking, this is proved by looking for a ``low-rank basis'' in the span of the top-degree parts $X_i$.
If one exists, then we decompose the whole basis simultaneously and replace it by its factors---reducing the maximum degree of the parts $\X$---and repeat. If there is no such basis then the parts span a high-rank punctured space, and we stop.
That  quickly leads to strong bounds.
We call this result a ``rank-regularity lemma'', and note that it has strong bounds since the above process terminates quickly.
%; translate to strong bounds for our weak regularity lemma.

The rest of the proof of the weak regularity lemma has, very roughly, the following logical structure (see Section~\ref{sec:WR} for all definitions): 
% \begin{align*}
%     \X \text{ $t$-rank-regular} &\Leftrightarrow  \rk(X_1+\Span\X') > t|\X|\\
%     &\Rightarrow |\bias(X_1+\Span\X')| < \e q^{-|\X|}\\
%     &\Rightarrow \X \text{ weak $\e$-regular.}
% \end{align*}
\begin{align*}
    \X \text{ $t$-rank-regular} &\Leftrightarrow \min_{Q\in \Span(\X')} \rk(X_1+Q) \ge t|\X|\\
    &\Rightarrow \max_{Q\in \Span(\X')} |\bias(X_1+Q)| \le \e q^{-|\X|}\\
    &\Rightarrow \X \text{ weak $\e$-regular.}
\end{align*}
% For example, we show (see Lemma~\ref{lemma:bias-to-reg}) that for any pencil $V \sm U$ whose members all have high rank, any basis of forms for $U$ can be extended to a basis of $V$ that has an element that is approximately ``independent'' of the rest of the basis.
The proof moreover guarantees the additional required conditions, e.g., that $X_1$ has maximal degree.
To translate from high rank to low bias, our proof relies on recent results on the relationship between structure and randomness for polynomials (see e.g.~\cite{Janzer19,Milicevic19,CohenMo21B,MoshkovitzZh22}).
These results are proved by going through the corresponding multilinear forms---which requires the characteristic of the field to be larger than the degree of the parts---by relating analytic rank and partition rank (defined by Gowers and Wolf~\cite{GowersWo11} and by Naslund~\cite{Naslund20}, respectively).

\subsection{Organization}

In Section~\ref{sec:WR} we state and prove our weak regularity lemma. 
The proof has several components. In Section~\ref{subsec:rankularity} we prove a rank-regularity lemma, which finds a small decomposition where every linear combination outside of a strict subspace has high rank.
In Section~\ref{subsec:bias-regularity} we show that a similar condition in terms of bias (rather than rank) suffices to get weak regularity, using Fourier-analytic techniques and known structure-vs-randomness results.
Finally, in Section~\ref{subsec:together}, we combine the aforementioned pieces to prove the weak regularity lemma.
%show how to convert this to a weak regular decomposition, using Fourier-analytic techniques, and relying on known structure-vs-randomness results.
In Section~\ref{sec:Apps} we obtain the applications of the weak regularity lemma outlined above.
%and in particular improving on a result of Karam CITE.
We end with open problems in Section~\ref{sec:Open}.

\subsection{Preliminaries}

An important aspect of regularity lemmas for polynomials is that they are independent of the number of variables $n$ (similarly to how, for example, the graph regularity lemma is independent of the number of vertices of the graph). 
Henceforth, let $n \ge 0$ be an (unbounded) integer, let $x_1,\ldots,x_n$ be variables, and denote $\Poly(\F) = \F[x_1,\ldots,x_n]$ (suppressing the dependence on $n$ when no confusion can arise).
%$P \colon \F^n \to \F$ (or polynomials for short) 
%over $\F$ (meaning with coefficients in $\F$) in any (finite) number of variables $x_1,\ldots,x_n$ ($n \in \N$).
%
%As such, it will be convenient to use the notation $\Poly(\F)$ for the set of all polynomials
%$P \colon \F^n \to \F$ (or polynomials for short) 
%over $\F$ (meaning with coefficients in $\F$) in any (finite) number of variables %$x_1,\ldots,x_n$ ($n \in \N$).
%Note that over a finite field $\F$, 
%
While we work with (formal) polynomials in this paper, note that 
evaluation gives a bijection between functions $\F^n \to \F$ 
and reduced polynomials in $\Poly(\F)$, meaning with individual variable degrees below $|\F|$.
%any $n$-variate polynomial over $\F$ determines a function $\F^n \to \F$ (and vice versa if we require each variable to have degree smaller than $|\F|$).
%(uniquely so if every variable has degree smaller than $|\F|$).
Write $\Poly_d(\F) \sub \Poly(\F)$ for the set of polynomials of (total) degree at most $d$.
%Note that any $P \in \Poly_d(\F)$ has a unique representation as (and so can be identified with) an $\F$-linear combination of monomials of degree at most $d$, with each variable of degree at most $|\F|-1$.
We henceforth omit $\F$ from our notation when it is clear from context.
Naturally, the Cartesian power $\Poly^m = \{(P_1,\ldots,P_m) \setmid \forall i \colon P_i \in \Poly\}$ stands for the set of $m$-tuples of polynomials, which induce polynomial maps $\F^n\to\F^m$; and analogously for $\Poly_d^m$.
A form is a homogeneous polynomial. We denote by $\Forms \sub \Poly$ the set of all forms of positive degree.
%set of all homogeneous polynomials (or forms for short) of \emph{positive} degree, in any (finite) number of variables;
Denote by $\Forms_d$ the vector space of forms of degree $d$ together with the zero form (by convention, $\deg(0)=-\infty$), %($\ge 1$);
and by $\Forms_d^m$ the vector space of $m$-tuples of such forms. %forms of degree $d$.
For polynomials $X_1,\ldots,X_k \in \Poly(\F)$, we denote by $\F[X_1,\ldots,X_k]$ the algebra they generate, that is, 
$\{F(X_1,\ldots,X_k) \setmid F \in \F[y_1,\ldots,y_k] \}$,
where $y_1,\ldots,y_k$ are fresh variables.
%$\{F(X_1,\ldots,X_r) \setmid r\text{-variate } F \in \Poly \}$.
%Of course, if $X_1=x_1,\ldots,X_k=x_k$ are (independent) variables, then $\F[X_1,\ldots,X_k]$ is the usual ring of polynomials.
%if $x_1,\ldots,x_n$ are independent variables then $\F[x_1,\ldots,x_n]$ is the usual ring of $n$-variate polynomials in $x_1,\ldots,x_n$.
We denote by $\deg_i(P)$ the degree of $P \in \F[y_1,\ldots,y_k]$ in the variable $y_i$ (that is, its degree when viewed as a univariate polynomial in $y_i$).

We will frequently use boldface letters to emphasize that an object is a tuple; so for example, we will use $\P$, instead of $P$, when dealing with a tuple of polynomials (or a polynomial map).
We will use the convention that for a tuple $\X=(X_1,\ldots,X_k)$ and a set $A$, the notation $\X \sub A$ is a shorthand for $X_i \in A$ for all $i$.
For a tuple $\P \in \Poly^k$, we denote $\deg(\P) = \max_i \deg(P_i)$;
more generally, for a set $V \sub \Poly$ we denote $\deg(V) = \sup_{P \in V} \deg(P)$.

Throughout, probabilities, images, fibers and surjectivity for a polynomial map always refer to the induced function.
Probabilities, unless otherwise specified, are to be understood with respect to a uniformly random input; that is, for $\X \colon \F^n \to \F^k$, $\Pr[\X=y]$ is a shorthand for $\Pr_{x \in \F^n}[\X(x)=y]$ with $x$ chosen uniformly at random from $\F^n$.
The image (or range) of a polynomial map $\P \colon \F^n \to \F^m$ is denoted $\Image\P = \{\P(x) \setmid x \in \F^n\}$ ($\sub \F^m$). For a subset $S \sub \F^n$, we write $\P(S) = \{\P(x) \vert x \in S\}$.

Note that $\Poly_d(\F)$ is a linear space over $\F$.
We write $U \le V$ when $U$ is a subspace of $V$.
We denote the linear span of $\X \in \Poly^k(\F)$ by 
$\Span\X = \Span\{X_1,\ldots,X_k\} = \{a \cdot \X \setmid a \in \F^k\}$, 
which is of course a subspace of $\Poly(\F)$.
Note that if $\X \in \Poly_d^k(\F)$ then $\Span\X \le \Poly_d(\F)$.

For $\P \in \Poly^m(\F)$, we refer to an expression of the form $\P=\vec{F}(\X)$, or $\P = \vec{F} \circ \X$, 
%with $\vec{F} \in \F[y_1,\ldots,y_k]^m$ 
for polynomial maps $\vec{F} \colon \F^k \to \F^m$ %a polynomial map
and $\X \in \Poly^k(\F)$, 
%$\X \in \F[x_1,\ldots,x_n]^k$, 
%with $\vec{F} \in \Poly^m(\F)$ and $\X \in \Poly^k(\F)$, 
as a \emph{decomposition} of $\P$ (so each component $P_i$ of $\P$ is decomposed $P_i=F_i(\X)$).
Note that this is a composition of (formal) polynomials,
and moreover, the choice of $\vec{F}$ need not be unique as $X_1,\ldots,X_k$ need not be independent.
When the map $\vec{F}$ is immaterial, we denote the decomposition by $\P \sub \F[\X]$.

As mentioned above, all fields considered in this paper are finite; we denote by $\F_q$ the finite field of size $q$ (so $q$ is a power of the characteristic $\ch(\F)$).

\section{Weak Regularity}\label{sec:WR}

For convenience, we repeat here the definitions of weak regularity and the weak regularity lemma.
Recall that for a $k$-tuple $\y=(y_1,\ldots,y_k)$, we denote by $\vec{y}'$ the $(k-1)$-tuple $\y'=(y_2,\ldots,y_k)$.

\begin{definition}[weak $\e$-regularity]\label{def:reg}
    Let $\F=\F_q$.
    We say that $\X \in \Forms^k(\F)$ is \emph{weak $\e$-regular} if $\deg(X_1)=\deg(\X)$,
    and for every $y \in \F^k$,
    \[\big| \Pr[\X=\y] - q^{-1}\Pr[\X'=\y'] \big| \le \e q^{-k}.\]   
    For $\P \sub \Poly(\F)$, we say that $\P \sub \F[\X]$ is a weak $\e$-regular \emph{decomposition} if 
    $\X \in \Forms^k(\F)$ is weak $\e$-regular
    and $\P \nsubseteq \F[\X']$.\footnote{By convention, the empty decomposition ($k=0$) is  weak $\e$-regular for any $\e$. Thus, constant polynomial maps $\P$ trivially have a weak $\e$-regular decomposition.}
\end{definition}
Roughly speaking, a tuple of forms $\X$ is weak $\e$-regular if at least one of its members---namely, $X_1$---is of largest degree, and is ``unbiased'' and ``independent'' of the rest.
%Roughly speaking, a set of forms $\X \sub \Forms$ is weak $\e$-regular if at least one of its members, of largest degree, is approximately ``unbiased'' and ``independent'' of the rest;
%, to within a small error $\e$.
Moreover, a polynomial map $\P$ has a weak $\e$-regular decomposition if it can be expressed in terms of a weak $\e$-regular $\X$, and depends on $X_1$.
Intuitively, a weak regular decomposition of $\P$ allows us to think of $\P$ as having only $k$ variables $X_1,\ldots,X_k$, and although these may not be independent, free variables, at least one of them approximately is.

Let us also note that the probabilistic condition in Definition~\ref{def:reg} means that $\Pr[\X=\y]/q^{-k} = \a \pm \e$ for every $\y \in \F^k$ with $\Pr[\X'=\y']/q^{-(k-1)} = \a$ (recall $\X'$ is a $(k-1)$-tuple).
Intuitively, $\X$ is distributed like $\F_q \times \X'$, up to an additive error of $\e$.
%
%Another perspective on Definition~\ref{def:reg} is that 
We also mention that Definition~\ref{def:reg} implies $\deg(\X) \le \deg(\P)$ (see Corollary~\ref{coro:deg-X}).

\begin{theorem}[weak regularity lemma]\label{thm:WRL}
    %Let $\F=\F_q$.
    Let $c \ge 1$.
    %and $\F=\F_q$.
    Every 
    $\P \in \Poly_d^m(\F_q)$,
    with $d < \ch(\F_q)$,
    has a weak $\e$-regular decomposition
    %for $\e=q^{-r}$,
    of size %at most 
    \[k \le (2mc)^{2^{d(1+o(1))}},\]
    where we write $\e q^{-k} = q^{-ck}$ for the error term.
\end{theorem}

Note that the decomposition size is polynomial in $m$.
We remark that Theorem~\ref{thm:WRL} is even stronger than the statement in the Introduction (Theorem~\ref{thm:WRL-into}), since here $\e$ is a function of the decomposition size $k$ (namely, $\e=q^{-(c-1)k}$). 

\begin{remark}\label{remark:conditional}
One can phrase weak regularity in terms of conditional probabilities, by taking $\e$ small enough.
Indeed, since $\deg(\X) \le d$ (Corollary~\ref{coro:deg-X}),
for every $\y \in \F^{k}$ we have that $\Pr[\X'=\y'] \ge q^{-(k-1)d}$ if nonzero, by Warning's second theorem, 
in which case for any $C\ge 1$,
\[\big|\Pr[\X=\y \vert \X'=\y'] - q^{-1}\big| %= |\Pr[\X=\y]/\Pr[\X'=\y'] - 1/q| 
\le \e q^{-k}/\Pr[\X'=\y'] \le q^{-ck + (k-1)d} \le q^{-Ck} ,\]
%\le \e q^{-k}/\Pr[\X'=\y'] \le \e q^{k(d-1)} \le q^{-Ck} ,\]
%\le q^{-k(c-(d-1))} \le q^{-Ck}\] 
%for every $\y \in \F^n$. 
by taking $c=C+d$ in Theorem~\ref{thm:WRL} (which leaves unchanged the shape of its bound on $k$).
%$\e q^{-k} = q^{-(C+d)k}$ (or $c=C+d+1$).
%can also be made arbitrarily small (here $C=c-(d+1)$).
%where $C=c-(d+1)$, 
%$c=C+d+1$, 
\end{remark}

%\section{Proof of the weak regularity lemma}

\subsection{Rank-regularity}\label{subsec:rankularity}

The proof of the weak regularity lemma has several components.
At the core of the proof is what we call a \emph{rank-regularity lemma}\footnote{Or ``rankularity'' lemma for short.} (see Definition~\ref{def:rankularity}). 
Roughly, the requirement for the parts $\X$ in a rank-regular decomposition
%guarantee for the parts $\X$ produced a rank-regularity 
is that they are homogeneous (not necessarily all of the same degree), 
and that every linear combination of $\X$ outside of a strict subspace has high rank (see Definition~\ref{def:rank}).
We will later 
convert a rank-regular decomposition into a weak regular decomposition. 
We begin by setting up the basic definitions we need, %, including rank and bias of polynomials.
and state the results that form the steps of our overall strategy.

An important definition in the study of polynomials is that of \emph{rank}, sometimes also called strength~\cite{AnanyanHo20} or Schmidt rank~\cite{Schmidt85}.
%and $h$-invariant).
Recall that a polynomial is \emph{reducible} if it can be factorized as the product of two non-constant polynomials (all polynomials over the same field).
\begin{definition}[rank]\label{def:rank}
    %Let $P \in \Forms_d(\F)$. %be a form of degree at least $2$.
    The \emph{rank} of a nonzero form  $P\in \Forms_d(\F)$ is:
    \begin{itemize}
        \item For $d \ge 2$:
        the least number of reducible forms that sum up to $P$:
        \[\rk(P) = \min\big\{ r\, \big\vert \exists R_1,\ldots,R_r \in \Forms_d(\F) \text{ reducible} \colon P = R_1+\cdots+R_r\big\}.\]
        \item For $d = 1$, $\rk(P)=\infty$. %For $d < 1$, $\rk(P)=0$.
    \end{itemize}
    More generally,
    the rank of a polynomial $P \in \Poly(\F)$ is the rank of its top-degree homogeneous part,\footnote{If $\deg(P)=d$ and we write $P=\sum_{i=0}^{d} P_i$ with $P_i$ a form of degree $i$, then $\rk(P)=\rk(P_d)$.}
    or $0$ if $P$ is a constant. 
\end{definition}

Note that for $P \in \Forms_d$ with $\rk(P) \le r$ ($<\infty$), we can write $P = Q(Y_1,\ldots,Y_{k})$ with $k \le 2r$, for some quadratic form $Q$ and forms $Y_i \in \Forms$ with $\deg(Y_i) <d$.
This uses the fact that a reducible form $R$ factorizes as the product of two (lower-degree) forms; indeed, the product of the lowest-degree homogeneous parts of the two factors, and the product of the highest-degree parts, are both homogeneous parts of $R$ and thus must be of the same degree.

The core of the proof of our weak regularity lemma is a weak regularity lemma for $\rk$.
For $\P \sub \Poly$, call a decomposition $\P \sub \F[\X]$ \emph{minimal} if 
$\P \nsubseteq \F[\X^*]$ for every $\X^* \sub \Forms$ with $\Span\X^* \subsetneq \Span\X$. Put differently, %$\P \sub \F[\X]$ is a minimal decomposition if 
$\P \nsubseteq \F[W]$ for every homogeneous subspace $W \lneq \Span\X$.

%no strict homogeneous subspace $V' \lneq \Span\X$ generates $\P$.
%(where $V'$ has a basis of forms $\X^*$).

\begin{definition}[rank-regularity]\label{def:rankularity}
For any $t \ge 0$:
\begin{itemize}
    \item $\X \in \Forms^r$ is \emph{$t$-rank-regular} if $\rk(X) \ge tr$ for all $X \in \Span\X$ outside of a strict subspace.\footnote{By convention, the empty tuple ($r=0$) is $t$-rank-regular.}
    \item $\P\in\Poly^m(\F)$ has a $t$-rank-regular decomposition of size $r$ 
    if $\P \sub \F[\X]$ %(that is, $\P \sub \F[\X]$) 
    is a minimal decomposition for some $t$-rank-regular $\X \in \Forms^r(\F)$.
\end{itemize}
\end{definition}

Note that ``all $X \in \Span\X$ outside of a strict subspace'' refers, explicitly, to all $X \in (\Span\X)\sm U$ for some strict subspace $U \lneq \Span\X$.\footnote{For example, $U$ might be $\Span(\X')$, provided $\X$ is linearly independent.}
In other words, $\X$ is $t$-rank-regular if $\Span\{X \in \Span\X \setmid \rk(X) < tr\} \neq \Span\X$.
This aspect\footnote{Equivalently, this is about high-rank punctured spaces: if homogeneous $V \le \Poly$ has a punctured space $W$ ($= V\sm U$, $U \lneq V$) with $\rk(P) \ge t\dim(V)$ for every $P \in W$, 
then any basis of forms for $V$ is $t$-rank-regular (and vice versa).}
of the definition is one reason that we are able to obtain strong bounds.
%for the rank-regularity lemma.
We also mention that Definition~\ref{def:rankularity} implies $\deg(\X) \le \deg(\P)$ (see Corollary~\ref{coro:deg-X}).

\begin{theorem}[rank-regularity lemma]\label{theo:rankular}
    Every $\P \in \Poly^m_{d}$
    has a $t$-rank-regular decomposition 
    of size at most $((2t+1)dm)^{2^d}$.
\end{theorem}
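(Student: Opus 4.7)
The plan is to prove the theorem by induction on $d=\deg(\P)$, regularizing the top-degree part of the decomposition first and then recursing on the remaining lower-degree pieces.

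\textbf{Base case $d=1$.} By Definition~\ref{def:rank}, every degree-$1$ form has rank $\infty$, so any tuple of linear forms is vacuously $t$-rank-regular, with strict subspace $\{0\}$. Taking $\X$ to be a basis for the span of the (linear) components of $\P$, the resulting decomposition is minimal and $|\X|\leq m\leq((2t+1)m)^{2}$.

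\textbf{Inductive step.} The key structural observation I would invoke is that the rank of any $X\in\Span\X$ equals the rank of its top-degree homogeneous part. Consequently, if the degree-$d$ sub-tuple $\X^{(d)}$ of $\X$ satisfies a ``pencil condition at the top degree''---namely $\rk(Y)\geq t|\X|$ for every $Y\in\Span\X^{(d)}$ outside some strict subspace $U^{(d)}\lneq\Span\X^{(d)}$---then $\X$ itself is $t$-rank-regular, with ambient strict subspace $\{X\in\Span\X:X^{(d)}\text{-part lies in }U^{(d)}\}\lneq\Span\X$. So only the current top degree needs to be made rank-regular explicitly.

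Given this reduction, I would run the following iterative refinement. Initialize $\X_0$ by splitting each component of $\P$ into its homogeneous parts, giving $|\X_0|\leq dm$. While the pencil condition at the current top degree $d'$ fails, there is $Y^*\in\Span\X^{(d')}$ with $\rk(Y^*)<t|\X|$; after a linear change of basis within $\X^{(d')}$ (which leaves $\Span\X$ unchanged) we may assume $Y^*$ is one of the tuple members. Writing $Y^*=Q(R_1,\ldots,R_k)$ with $k<2t|\X|$ and $\deg(R_i)<d'$, I replace $Y^*$ in $\X$ by $R_1,\ldots,R_k$ and minimize. Each such step strictly decreases $|\X^{(d')}|$, so after at most $|\X^{(d')}|$ iterations at degree $d'$ either the pencil condition holds there, or $\X^{(d')}$ becomes empty and the top degree drops. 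Once the top degree is rank-regular, I invoke the inductive hypothesis on the sub-tuple of forms of degree $<d$ (viewed as a polynomial map of degree at most $d-1$) to obtain a $t$-rank-regular decomposition of $\P$ of the required form.

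\textbf{Main obstacle.} The technical heart is the bookkeeping needed to match the bound $((2t+1)dm)^{2^d}$. A feedback loop arises because the threshold $t|\X|$ at the top degree grows with the current $|\X|$, every refinement injects up to $2t|\X|$ new lower-degree forms, and the inductive call on those lower-degree forms enlarges the sub-tuple further. Carefully closing the resulting recursion---where each degree reduction roughly squares the bound on the size, thereby producing the $2^{d}$ exponent---is the main difficulty I anticipate.
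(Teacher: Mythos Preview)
Your structural observations are correct and match the paper's Lemma~\ref{lemma:low-deg} and Corollary~\ref{coro:top-rank-reg}: rank is determined by the top-degree homogeneous part, and a pencil condition at the top degree (with threshold $t|\X|$) suffices for $t$-rank-regularity of the whole tuple. The overall architecture---process the top degree, drop it, repeat---is also right.

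The genuine gap is in the refinement step. You replace \emph{one} low-rank top-degree form $Y^*$ at a time. With this rule, each iteration multiplies $|\X|$ by roughly $(1+2t)$, and you may need up to $|\X^{(d')}|$ iterations before degree $d'$ clears. Already at the first level this gives $|\X|\le dm(1+2t)^{m}$, which is exponential in $m$; cascading through the lower degrees produces tower-type growth, not the $((2t+1)dm)^{2^d}$ you are aiming for. Your anticipated ``each degree reduction roughly squares the bound'' simply does not follow from a one-at-a-time scheme.

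What you are missing is the content of the paper's Lemma~\ref{lemma:reg-step-eff}: failure of the pencil condition at degree $d'$ means the low-rank forms \emph{span} $\Span\X^{(d')}$, so there is an entire \emph{basis} $B_1,\ldots,B_k$ with $\rk(B_j)<t|\X|$. Decomposing all the $B_j$ simultaneously eliminates the whole degree-$d'$ layer in a \emph{single} step, at a cost of at most $2t|\X|\cdot k + (|\X|-k) \le (2t+1)|\X|^2$ forms. That is where the squaring comes from, and why the total number of steps is at most $d$ rather than $\sum_{d'}|\X^{(d')}|$.

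A smaller point: your sentence ``once the top degree is rank-regular, I invoke the inductive hypothesis on the sub-tuple of degree $<d$'' is confused. By your own structural observation, once the pencil condition holds at the top degree the full $\X$ is already $t$-rank-regular and you are done; recursion is only needed in the other branch, where $\X^{(d')}$ has been emptied.
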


\subsubsection{Linear spaces of polynomials}

Here we prove several useful properties of linear spaces of polynomials, which will be used in the proof of the rank-regularity lemma and the weak regularity lemma.
We begin with some useful notation.

Henceforth, for a linear space $V \le \Poly$ and $R>0$, we denote the span of its low-rank polynomials by
\[U_R(V)=\Span\{X \in V \setmid \rk(X) < R\}.\]
Note that $\X \in \Forms^r$ ($r \ge 1$) is $t$-rank-regular if and only if $U_R(V) \neq V$ for $V=\Span\X$ and $R=tr$.

We say that a finite-dimensional linear space $V \le \Poly$ is \emph{homogeneous} if $V$ is spanned by forms.\footnote{By convention, the zero space $\{0\}$ is homogeneous.}
Equivalently, $V$ is the sum---and thus the direct sum---of its homogeneous parts: 
$V = \bigoplus_{i \ge 0} V_i$ where $V_i := V \cap \Forms_i$.
We abbreviate $V^\uparrow = V_{\deg(V)}$, that is, 
\[V^\uparrow = V \cap \Forms_{\deg(V)} ,\]
%\[V^\uparrow =  \{X \in V \cap \Forms \vert \deg(X)=\deg(V)\},\]
%\[V^\uparrow =  \{X \in V \vert \deg(X)=\deg(V)\},\]
where we recall the notation $\deg(V) = \max_{X \in V} \deg(X)$.

For the proof of the rank-regularity lemma (Theorem~\ref{theo:rankular}) we will need a result 
on rank-regularity of forms of top degree (Corollary~\ref{coro:duplicate-rankularity} below). It is an immediate consequence of the following structural result about $U_R(V^\uparrow)$.

\begin{lemma}\label{lemma:U-top}
    Let $V \le \Poly$, $R>0$. If $V$ is homogeneous then 
    $U_R(V)\cap V^\uparrow = U_R(V^\uparrow)$.
\end{lemma}
\begin{proof}
    We need to show $U_R(V)\cap V^\uparrow \sub U_R(V^\uparrow)$, as the reverse containment trivially holds.
    Let $P \in U_R(V)\cap V^\uparrow$, so that $P \in V$ is a form with $\deg(P)=\deg(V)$, 
    and $P$ is a linear combination of $P_i \in V$ with $\rk(P_i) < R$.
    Then $P$ is a linear combination of their homogeneous parts of degree $\deg(V)$, which are:
    \begin{itemize}
        \item also in $V$ (as $V$ is homogeneous), and thus in $V^\uparrow$, and
        \item of rank $<R$ (by definition of rank for non-homogeneous polynomials).
    \end{itemize}  
    %and therefore lie in $U_R(V^\uparrow)$.
    Therefore $P \in U_R(V^\uparrow)$, so we are done.
    %We deduce that $U_R(V)\cap V^\uparrow \sub U_R(V^\uparrow)$, and since the containment in the other direction trivially holds, we are done.
\end{proof}

For a tuple $\Y=(Y_1,\ldots,Y_m) \in \Forms^r$, denote its set of top-degree elements by $\Y^\uparrow = \{Y_i \setmid \deg(Y_i) = \deg(\Y)\}$.
\begin{corollary}\label{coro:duplicate-rankularity}
    Let $\Y \in \Forms^r$, and let $\Y^* \in \Forms^r$ 
    %satisfy $\{X \in \Y^*\}=\{X \in \Y^\uparrow\}$.
    have the same underlying set as $\Y^\uparrow$.
    %be equal to $\Y^\uparrow$ as sets.
    For any $t$, if $\Y^*$ is $t$-rank-regular then $\Y$ is $t$-rank-regular.
\end{corollary}
\begin{proof}
    Put $V=\Span(\Y)$, $V^* = \Span(\Y^*)$, and note $V^*=\Span(\Y^\uparrow) = (\Span\Y)^\uparrow = V^\uparrow$.
    Let $R=tr$, and suppose $U_R(V)=V$. We need to show $U_R(V^*)=V^*$ (with the same $R$, as the tuples $\Y$ and $\Y^*$ have the same number $r$ of polynomials), or equivalently, $U_R(V^\uparrow)=V^\uparrow$.
    This holds by Lemma~\ref{lemma:U-top}: $U_R(V^\uparrow) = U_R(V) \cap V^\uparrow = V \cap V^\uparrow =  V^\uparrow$, so we are done.   
\end{proof}

For the rest of the proof of the weak regularity lemma (Theorem~\ref{thm:WRL}),
we will need Lemma~\ref{lemma:U-hom} and Lemma~\ref{coro:top-rank-reg} below.

We begin with the following closure property of the subspaces $U_R(V)$.

\begin{claim}\label{lemma:low-deg}
    Let $V \le \Poly$, $R>0$, $\dim(V)<\infty$.
    Every $X \in V$ with $\deg(X)<\deg(U_R(V))$ satisfies $X \in U_R(V)$.
\end{claim}
\begin{proof}
    Abbreviate $U=U_R(V)$.
    Note that there exists $X_0 \in U$ with $\rk(X_0)<R$ of maximal degree, $\deg(X_0)=\deg(U)$.
    %Observe that there exists $X_0 \in U^\uparrow$ satisfying $\rk(X_0) < R$.
    We claim that $X_0+X \in U$.
    Indeed, since $\deg(X) < \deg(U) = \deg(X_0)$, we have
    $\rk(X_0+X) = \rk((X_0 + X)^\uparrow) = \rk(X_0^\uparrow) = \rk(X_0) < R$.
    Thus, as desired, $X \in U$, being the difference $X=(X_0+X)-X_0$ of two members of the linear space $U$.
\end{proof}

\begin{lemma}\label{lemma:U-hom}
    Let $V \le \Poly$, $R>0$. If $V$ is homogeneous then so is $U_R(V)$.
    %If $V \le \Poly$ is homogeneous then so is $U_R(V)$, for any $R \in \N$.
\end{lemma}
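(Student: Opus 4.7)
The plan is to show that each generator of $U_R(V)$---i.e., each $X \in V$ with $\rk(X) < R$---already decomposes as a sum of forms that themselves lie in $U_R(V)$. Since $V$ is homogeneous, any such $X$ can be written uniquely as $X = \sum_{i \ge 0} X_i$ with $X_i \in V \cap \Forms_i$. It therefore suffices to check that each homogeneous part $X_i$ lies in $U_R(V)$; for then the spanning set $\{X \in V \mid \rk(X) < R\}$ is itself contained in the span of forms in $U_R(V)$, so $U_R(V)$ is spanned by forms, i.e., homogeneous.

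First I would handle the top-degree piece $X_d$, where $d = \deg(X)$. By the convention that the rank of a polynomial equals the rank of its top-degree form, $\rk(X_d) = \rk(X) < R$. Since $X_d \in V$ (using homogeneity of $V$) and has rank below $R$, it is itself a qualifying generator of $U_R(V)$, so $X_d \in U_R(V)$. In particular, this forces $\deg(U_R(V)) \ge d$.

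For each lower-degree piece $X_i$ with $i < d$, I would invoke Lemma~\ref{lemma:low-deg}: since $X_i \in V$ and $\deg(X_i) = i < d \le \deg(U_R(V))$, the lemma gives $X_i \in U_R(V)$ directly. Combining the two cases, every homogeneous part of every generator of $U_R(V)$ lies in $U_R(V)$, which is exactly what is needed.

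I do not anticipate a real obstacle here: the essential low-degree closure work has already been absorbed into Lemma~\ref{lemma:low-deg}. The only subtle point is the careful use of the convention $\rk(X) = \rk(X_d)$ for an inhomogeneous $X$, which is precisely the bridge that converts the rank hypothesis on $X$ into the membership of its top form $X_d$ in $U_R(V)$, thereby unlocking the degree bound needed to apply Lemma~\ref{lemma:low-deg} to all of its lower parts.
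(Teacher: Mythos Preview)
Your proof is correct and follows essentially the same approach as the paper's: show that each generator $X$ of $U_R(V)$ has all its homogeneous parts in $U_R(V)$, handling the top-degree piece via the convention $\rk(X)=\rk(X^\uparrow)$ and the lower-degree pieces via Lemma~\ref{lemma:low-deg}. The paper's argument is structured identically, so there is nothing to add.
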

\begin{proof}
Put $U=U_R(V)$. %Suffices to show $U$ has is spanned by forms.
For $X \in \Poly$, denote by $X^{(i)} \in \Forms_i$ the degree-$i$ part of $X$ (so that $X = \sum_i X^{(i)}$).
First, we prove another closure property of $U$ (in a homogeneous $V$): for every $X \in V$, if $\rk(X)<R$ then $X^{(i)} \in U$ for every $i$.
Let us prove this.
If $i=\deg(X)$ (that is, $X^{(i)} = X^\uparrow$)
then $\rk(X^\uparrow)=\rk(X) < R$; since $V$ is homogeneous, $X^\uparrow \in V$, so $X^\uparrow \in U$, as needed.
On the other hand, if $i<\deg(X)$
then $\deg(X^{(i)})<\deg(X) \le \deg(U)$, since $X \in U$, 
and since $X^{(i)} \in V$ by the homogeneity of $V$,
we can apply Claim~\ref{lemma:low-deg} on $X^{(i)}$
to deduce $X^{(i)} \in U$.
%by Lemma~\ref{lemma:low-deg}, $X^{(i)} \in U$. %, as desired.
This proves our claim.

Let $\X = \{X \in V \vert \rk(X)<R\}$, so that $U=\Span(\X)$ by definition.
By the claim above, $\X^* := \{X^{(i)} \setmid X \in \X, i \le \deg(X)\}$ satisfies $\X^* \sub U$.
Since $U = \Span(\X) \sub \Span(\X^*) \sub U$, we deduce that $U$ has a spanning set of forms, which completes the proof.
\end{proof}

\begin{lemma}\label{coro:top-rank-reg}
    Let $V \le \Poly$, $R>0$.
    If $V$ is homogeneous, then $U_R(V)=V$ if and only if $U_R(V^\uparrow)=V^\uparrow$.
\end{lemma}
\begin{proof}
    The forward direction is equivalent to Corollary~\ref{coro:duplicate-rankularity}: if $U_R(V)=V$, then $\deg(U_R(V))=\deg(V)$, 
    which implies $V^\uparrow \sub U_R(V)^\uparrow \sub U_R(V^\uparrow)$, as needed.

    For the other direction, suppose $U_R(V^\uparrow) = V^\uparrow$.
    Then $V^\uparrow \sub U_R(V)$, as $U_R(V^\uparrow) \sub U_R(V)$, 
    and in particular, $\deg(U_R(V))=\deg(V)$.
    %, we have $V^\uparrow \sub U_R(V)$, so $\deg(U_R(V))=\deg(V)$.
    Let us denote $V' = \bigoplus_{i<\deg(V)} V_i$. 
    Claim~\ref{lemma:low-deg} says $V'\sub U_R(V)$.
    Then 
    $V = V^\uparrow + V' \sub U_R(V) + U_R(V) = U_R(V) \sub V$,
    that is, $U_R(V)=V$, as needed.
\end{proof}

\subsubsection{Proof of the rank-regularity lemma}

%At the core of the proof is the following lemma.
Theorem~\ref{theo:rankular} will be proved using a repeated application of the following decomposition lemma.
It shows that if a tuple of forms $\X$, all of the same degree $d$, is not $t$-rank-regular, then 
we can decompose it into not-too-many forms of varying degrees, all strictly smaller than $d$.
Crucially, the lemma considers not just individual low-rank forms, but the whole subspace they span.

\begin{lemma}\label{lemma:reg-step-eff}
    If $\X \in \Forms_d^r(\F)$ is not $t$-rank-regular,
    then $\X\sub\F[\Y]$ for some $\Y \in \Forms^R$ with $\deg(\Y)<d$ and $R \le 2tr^2$.
\end{lemma}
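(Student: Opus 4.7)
The plan is to translate the failure of $t$-rank-regularity into a statement about the low-rank part of $\Span\X$, and then decompose each element of a well-chosen basis using the structural fact for low-rank forms. By Definition~\ref{def:rankularity}, $\X$ being not $t$-rank-regular means exactly that $\{X \in \Span\X : \rk(X) < tr\}$ spans all of $\Span\X$, that is, $U_{tr}(V) = V$ where $V := \Span\X$. Since $\X \in \Forms_d^r$, every nonzero element of $V$ is a form of degree exactly $d$.

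Next, I would extract from the spanning set $\{X \in V : \rk(X) < tr\}$ a basis $Y_1, \ldots, Y_{r'}$ of $V$, where $r' = \dim V \le r$ (this is just the exchange lemma applied to the spanning set, valid because $U_{tr}(V) = V$). Each $Y_i \in \Forms_d$ then satisfies $\rk(Y_i) \le tr - 1$. Applying the structural description for low-rank forms noted immediately after Definition~\ref{def:rank}, I can write
\[Y_i = Q_i(R_{i,1}, \ldots, R_{i,k_i})\]
for some $Q_i \in \Forms_2$ and forms $R_{i,j} \in \Forms$ with $\deg(R_{i,j}) < d$ and $k_i \le 2(tr - 1)$. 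Concatenating all the $R_{i,j}$ into one tuple $\Y$, we obtain $\Y \in \Forms^R$ with $\deg(\Y) < d$ and
\[R \le r' \cdot 2(tr - 1) \le 2tr^2.\]
Finally, since each $X \in \X \sub V = \Span\{Y_1,\ldots,Y_{r'}\}$ is a linear combination of the $Y_i$, and each $Y_i \in \F[\Y]$ by the decomposition above, we conclude $\X \sub \F[\Y]$, as desired.

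I do not expect any real obstacle here: the argument is essentially a single application of the low-rank decomposition, and the bound $2tr^2$ arises transparently as (dimension of $V$) $\times$ (per-element expansion cost). The only delicate point is ensuring that a basis of $V$ can simultaneously be chosen to consist of forms of rank $<tr$ and to lie in $\Forms_d$, and both follow at once from the hypothesis $U_{tr}(V) = V$ combined with $V \le \Forms_d$.
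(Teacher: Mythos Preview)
Your proof is correct and matches the paper's approach exactly: use that ``not $t$-rank-regular'' means the low-rank forms span $V = \Span\X$, extract a basis of such forms, decompose each via the remark after Definition~\ref{def:rank}, and concatenate. (One tiny nitpick: writing $\rk(Y_i) \le tr-1$ tacitly assumes $tr$ is an integer, but since rank is integer-valued and $<tr$, you still get $R < 2tr^2$ regardless, exactly as in the paper.)
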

\begin{proof}
    Assume $d\ge 2$, as otherwise the hypothesis is vacuous.
    Put $V=\Span\X$ ($\leq \Forms_d$) and $k = \dim V$ ($\le r$).
    Let \[\vec{X^-} = \{X \in V \setmid \rk(X) < tr\}.\]
    Suppose $\X$ is not $t$-rank-regular, so that $\Span(\vec{X^-}) = V$.
    Thus, $\vec{X^-}$ contains a basis $\B=(B_1,\ldots,B_k) \in \Forms^k_d$ for $V$.
    This has the following implications:
    \begin{itemize}
        \item Since $\X \sub \Span\vec{B}$ ($=V$), there is a linear map $\vec{L} \colon \F^k \to \F^{r}$ such that $\X = \vec{L}(\vec{B})$.
        \item Since $\vec{B} \sub \vec{X^-} \cap \Forms$, 
        and by the definition of $\rk$ (recall Definition~\ref{def:rank} and the subsequent remark), we have
        $\vec{B} = \vec{Q}(\vec{Y})$
        for some homogeneous quadratic $\vec{Q} \colon \F^R \to \F^k$
        and some $\vec{Y} \in \Forms^R$ with $\deg(\Y)<d$,
        %for some $k$-tuple of quadratic forms $\vec{Q}$ 
        %for some quadratic $\vec{Q} \in \Forms_2^k$ 
        %and some $R$-tuple of forms $\vec{Y} \in \Forms^R$ with $\deg(\Y)<d$,
        such that $R \le \sum_{i \in [k]} 2\rk(B_i) < 2tr \cdot k$.
    \end{itemize}
    Combining the above, we obtain the decomposition
    $\X = \vec{L}(\vec{Q}(\vec{Y})) = \vec{F}(\Y)$, 
    where $\vec{F} := \vec{L}(\vec{Q}) \colon \F^R \to \F^r$ is homogeneous and quadratic,
    %where $\vec{F} := \vec{L}(\vec{Q})$ is an $r$-tuple of quadratic forms
    %where $\vec{F} := \vec{L}(\vec{Q}) \in \Forms_2^r$
    %$\vec{X} = \vec{L} \circ \vec{Q} \circ \vec{Y} = \vec{F} \circ \Y$, 
    %where $\vec{F} := \vec{L}\circ\vec{Q} \in \Forms_2^r$
    and $\vec{Y} \in \Forms^R$ with $\deg(\Y) < d$, such that $R < 2tr \cdot k \le 2tr^2$. This completes the proof.
\end{proof}

We prove the rank-regularity lemma by an iterative construction with at most $d$ steps, maintaining at all times a tuple of forms of various degrees.
At each step, we move to a minimal decomposition, 
and if it is not a $t$-rank-regular decomposition
then, we show, its subset of forms of maximal degree is also not $t$-rank-regular. This last deduction requires care---as the rank requirement in Definition~\ref{def:rankularity} ($\rk(X) \ge tr$) depends on the number of polynomials in the decomposition---and follows using Corollary~\ref{coro:duplicate-rankularity}.
Having done that, we decompose the maximum-degree forms in terms of lower degree forms using Lemma~\ref{lemma:reg-step-eff},
which strictly decreases the maximum degree of the forms in the decomposition, as needed.

\begin{proof}[Proof of Theorem~\ref{theo:rankular}]
    Assume $\P$ is non-constant, as otherwise there is nothing to prove.
    Let $\vec{X^0} \in \Forms^{r_0}$ be obtained from $\vec{P} \in \Poly^m$ by 
    collecting all nonzero positive-degree homogeneous parts of the components of $\vec{P}$.
    %replacing each component of $\vec{P}$ with its nonzero homogeneous parts of degree $1,2,\ldots,d$. 
    Note that $\vec{P} \sub \F[\vec{X^0}]$ and $r_0 \le dm$. 
    %
    %Put $\vec{X^0}=\vec{P}$.
    
    We claim that for some $s \ge 0$, there are $s$ homogeneous polynomial maps $\vec{X^1}, \ldots, \vec{X^s}$, with $\vec{X^i} \in \Forms^{r_i}$ for $i=1,\ldots,s$, 
    %\[\vec{X^1}, \ldots, \vec{X^s} \in \Forms^*\]
    such that $\P \in \F[\vec{X^s}]$ is a $t$-rank-regular decomposition, 
    %$\vec{X^s}$ is $t$-rankular, 
    and for every $0 \le i \le s$:
    %$i \ge 0$:
    \begin{itemize}
        \item $\P \sub \F[\vec{X^{i}}]$,
        \item $\deg(\vec{X^i}) \le d-i$, and
        \item $r_i \le (2t+1)^{2^i - 1}(dm)^{2^i}$.
    \end{itemize}
    The process terminates (that is, $s$ is well defined);
    indeed, any $\X \sub \Forms$ with $\deg(\X)=1$ is $t$-rank-regular for any $t$, since $\rk(X)=\infty$ for every $X \in \Span(\X)\setminus\{0\}$, 
    so a minimal decomposition $\P \sub \F[\X]$ with $\deg(\X) = 1$ is a $t$-rank-regular decomposition for any $t$.
    Proving the above claim would complete the proof,
    since $1 \le \deg(\vec{X^{s}}) \le d-s$ implies $s < d$, 
    and thus $\vec{X^s} \in \Forms^{r_s}$ with $r_s \le ((2t+1)dm)^{2^d}$, 
    as desired.

    For each $i$ starting with $i=0$, 
    given $\vec{X^{i}}$ we proceed as follows:
    \begin{itemize}
        \item %If $\P \in \F[\vec{X^i}]$ is not a minimal decomposition, then 
        Let $\vec{Y} \in \Forms^{r'}$ be a basis of a homogeneous subspace $W \le \Span(\vec{X^i})$ of minimal dimension satisfying $\P \sub \F[W]$.
        %
        %Consider a homogeneous subspace $W \le \Span(\vec{X^i})$ of minimal dimension satisfying $\P \sub \F[W]$, 
        %and let $\vec{Y} \in \Forms^{r'}$ be a basis of $W$.
        Then $\P \sub \F[\vec{Y}]$ is a minimal decomposition,
        and moreover, $\deg(\vec{Y}) \le \deg(W) \le \deg(\vec{X^i})$,
        and $r' = \dim W \le \dim \Span(\vec{X^i}) \le r_i$.

        \item Now, if $\P \sub \F[\vec{Y}]$ is $t$-rank-regular 
        then set $s=i$, $\vec{X^{s}}=\Y$ and $r_s=r'$,
        finishing the construction.
        Otherwise, $\vec{Y}$ must not be $t$-rank-regular.
        We will use $\vec{Y}$ to construct $\vec{X^{i+1}}$ with $\deg(\vec{X^{i+1}})<\deg(\vec{X^{i}})$.
        %Recall that 
        In order to apply Lemma~\ref{lemma:reg-step-eff}, we need a tuple of forms satisfying two conditions: they are all of the same degree, and the tuple is not $t$-rank-regular.

        Put $\ell = \deg(\Y)$, and obtain $\vec{Y^*} \in \Forms^{r'}$ from $\Y$ by replacing each component of $\Y$ of degree smaller than $\ell$ by an arbitrary degree-$\ell$ polynomial from $\Y$. %\footnote{That is, $\vec{Y^*}$ is obtained from $\vec{Y}^\uparrow$ by arbitrarily duplicating components to achieve size $r'$.}
        By Corollary~\ref{coro:duplicate-rankularity},  $\vec{Y^*}$ is not $t$-rank-regular.

        Apply Lemma~\ref{lemma:reg-step-eff} on $\vec{Y^*}$.
        Merge the resulting tuple of forms with the forms in $\vec{Y}$ of degree smaller than $\ell$ to obtain $\vec{X^{i+1}}\in \Forms^{r_{i+1}}$
        with $\vec{Y} \sub \F[\vec{X^{i+1}}]$, %(so that $\P \sub \F[\vec{X^{i+1}}]$), 
        $\deg(\vec{X^{i+1}}) < \deg(\vec{X^i})$,
        and $r_{i+1} \le 2tr_i^2 + r_i \le (2t+1)r_i^2$.
    \end{itemize}
    
    To prove that this construction gives the claim above,
    we proceed by induction on $i \ge 0$.
    The induction base $i=0$ follows trivially from the construction of $\vec{X^0}$, as $\P \sub \F[\vec{X^0}]$ with $\deg(\vec{X^0}) \le d$, and $r_0 \le dm = (2t+1)^0 (dm)^1$.
    For the induction step
    we have, by the construction of $\vec{X^{i+1}}$ and the induction hypothesis,
    that $\P \sub \F[\vec{Y}] \sub \F[\vec{X^{i+1}}]$,
    that $\deg(\vec{X^{i+1}}) \le \deg(\vec{X^i}) -1 \le d-(i+1)$,
    and that 
    $r_{i+1} \le (2t+1)r_{i}^2 \le (2t+1)((2t+1)^{2^{i} - 1}(dm)^{2^{i}})^2 = (2t+1)^{2^{i+1}-1} (dm)^{2^{i+1}}$,
    %$r_{i+1} \le 2tr_{i}^2 \le (2t)((2t)^{2^{i} - 1}(dm)^{2^{i}})^2 = (2t)^{2^{i+1}-1} (dm)^{2^{i+1}}$,
    as needed.
    This completes the proof.
\end{proof}

\subsection{Bias and weak regularity}\label{subsec:bias-regularity}

We will use the notion of bias of a polynomial, which is a standard analytic measure related to deviation from equidistribution.
%We first define the notion of bias of a polynomial, which measures deviation from equidistribution.
Henceforth, fix any nontrivial additive character\footnote{A nontrivial additive character of $\F$ is a homomorphism $\chi \colon \F \to \C^\times$ ($\chi(a+b)=\chi(a)\chi(b)$) with $\chi \not\equiv 1$.
Equivalently, $\chi(y) = \omega^{\Tr(cy)}$, with $\omega=e^{2\pi i/p}$, 
%$\chi(y) = e^{2\pi i \Tr(cy)/p}$ 
$p=\ch(\F)$, $c \in \F^\times$, and $\Tr$ is the trace of $\F$ on $\F_p$.} $\chi \colon \F\to\C$ (for concreteness, if $\F=\F_p$ is a prime field, take $\chi(y) = \omega^y$, where $\omega$ is the principal root of unity, $\omega=e^{2\pi i/p}$, and $y$ on the right hand side is viewed as an element of $\{0,\ldots,p-1\}$).
The \emph{bias} of $P \in \Poly(\F)$ is the complex number
\[\bias(P) = \Exp_{x \in \F^n} \chi(P(x)).\]
%where $\chi \colon \F\to\C$ is any nontrivial additive character.
As we will see below, simple analysis relates bias and equidistribution as follows (see also e.g.~\cite{BGY20}, or Claim~33 in \cite{BogdanovVi10}):
\[\forall y\in \F\colon \Big|\Pr[P=y]-\frac{1}{|\F|}\Big| \le \max_{a \in \F^\times}|\bias(aP)|.\]

Towards proving that rank-regularity implies weak regularity, we will use the best bound known for 
the structure versus randomness problem for polynomials, 
which is almost linear.
\begin{theorem}[\cite{MoshkovitzZh22}, Corollary~8.5 and Remark~8.3]\label{theo:MZ}
    For any $P \in \Poly_d(\F)$ 
    with $\ch(\F)>d$,
    \[\rk(P) \ge r \quad\Rightarrow\quad |\bias(P)| \le |\F|^{-c_d r/\log_2(r+1)}\]
    and $c_d = \Omega_d(1)$, namely $c_d \ge 2^{-d^{1+o(1)}}$.
\end{theorem}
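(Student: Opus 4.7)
The plan is to reduce the statement about polynomial bias to a corresponding statement about symmetric multilinear forms via polarization, and then invoke the almost-linear relationship between analytic rank and partition rank. Given $P \in \Forms_d$, polarization (legitimate since $\ch(\F) > d$, so $d!$ is invertible) produces a symmetric $d$-linear form $T$ on $(\F^n)^d$ with $T(x,\ldots,x) = d!\,P(x)$. Under this correspondence, the polynomial rank $\rk(P)$ and the partition rank $\PR(T)$ agree up to a factor depending only on $d$: a decomposition of $P$ into $r$ reducible forms multilinearizes into a partition-rank-$O_d(r)$ decomposition of $T$, and vice versa.

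Next I would relate the two biases. Using the standard Fourier/derivative identity $|\bias(P)|^{2^{d-1}} \le |\Exp_{y_1,\ldots,y_{d-1}} \chi(\Delta_{y_1}\cdots\Delta_{y_{d-1}} P)|$, and noting that the iterated difference operator applied to $P$ is essentially $d!\,T(y_1,\ldots,y_{d-1},\cdot)$ plus a lower-order term, one obtains $\ark(P) \ge \ark(T)/2^{d-1}$ up to factors depending only on $d$, where $\ark(T) := -\log_{|\F|}|\bias(T)|$. The statement then reduces to the purely multilinear claim: $\PR(T) \ge r$ implies $\ark(T) \ge c_d\, r / \L_\F(r)$.

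To prove that multilinear claim, I would induct on $d$. The same derivative identity relates $\bias(T)$ to an average of biases of the $(d-1)$-linear slices $T_y(\cdot) := T(y,\cdot,\ldots,\cdot)$; Cauchy--Schwarz on this average extracts a positive-density set of $y$'s along which $|\bias(T_y)|$ is large. By induction, $T_y$ has small partition rank for such $y$, and a subspace-extraction step---finding an affine subspace of $y$'s on which the low-partition-rank decompositions of the $T_y$ can be made consistent and then glued into a global decomposition of $T$---transfers the slice-wise rank bound into a bound on $\PR(T)$ itself. Exponentiating at the end gives the claimed $|\bias(P)| \le |\F|^{-c_d r/\L_\F(r)}$.

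The main obstacle is the quantitative bookkeeping in the gluing step: a na\"ive induction across the $d$ levels blows up into a tower-type loss in $d$, destroying the almost-linear dependence on $r$. Achieving $c_d = 2^{-d^{1+o(1)}}$ requires the refined potential argument of \cite{MoshkovitzZh22}, in which each inductive step only inflates the rank bound by a factor of $\L_\F$ (logarithmic in the current rank) and where the accumulated loss over $d$ levels yields exactly the stated constant. The logarithmic $\L_\F$ factor in the denominator of the exponent is, moreover, unavoidable in the constant-field regime, because bias can concentrate along low-dimensional affine subvarieties; the construction in \cite{MoshkovitzZh22} ensures that this is the only loss incurred.
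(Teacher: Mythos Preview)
This theorem is not proved in the paper: it is quoted as a black-box input from \cite{MoshkovitzZh22} (Corollary~8.5 there), and the present paper only \emph{uses} it in the proof of Theorem~\ref{thm:WRL}. So there is no ``paper's own proof'' to compare against; your proposal is a sketch of the argument in the cited reference, not of anything carried out here.

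That said, as a summary of the \cite{MoshkovitzZh22} strategy your outline is broadly on target: pass from $P$ to its polarization $T$ (using $\ch(\F)>d$), relate $\rk(P)$ to $\PR(T)$ and $|\bias(P)|$ to $\ark(T)$ up to $O_d(1)$ losses, and then invoke the quasi-linear $\PR$-vs-$\ark$ bound for multilinear forms. The one place where your description is vague to the point of being a gap is the ``subspace-extraction / gluing'' step and the claim that a refined potential argument keeps the loss per level to a single $\L_\F$ factor. That is precisely the technical heart of \cite{MoshkovitzZh22}, and a naive induction does \emph{not} give it; you would need to actually reproduce their mechanism (or an equivalent one) to justify $c_d = 2^{-d^{1+o(1)}}$ rather than a tower-type constant. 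For the purposes of the present paper, however, none of this matters: the result is simply imported, and the paper even remarks that the weaker polynomial bounds of Mili\'{c}evi\'{c} or Janzer would suffice at the cost of a worse exponent in Theorem~\ref{thm:WRL}.
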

We remark that~\cite{MoshkovitzZh22} even gives that the logarithmic term in Theorem~\ref{theo:MZ} can be removed if $|\F| \ge r^{\Omega_d(1)}$
%tends to $1$ as $|\F|$ grows 
(namely, the denominator is $\log_{|\F|}(r+1)+1$), but we do not need this.
%We note that the restriction on the characteristic of the field in Theorem~\ref{theo:MZ} is unnecessary if $P$ is multilinear.
We also remark that one can replace the use of Theorem~\ref{theo:MZ} in our proof by earlier bounds of Mili\'{c}evi\'{c}~\cite{Milicevic19} or Janzer~\cite{Janzer19}, and still obtain reasonable, albeit somewhat worse bounds in Theorem~\ref{thm:WRL}.

Using bias, a Fourier-analytic argument will help us to deduce weak regularity.
Recall that $\X$ being weak $\e$-regular requires that for every $y \in \F^{k}$,
\[\big| \Pr[\X=\y] - |\F|^{-1}\Pr[\X'=\y'] \big| \le \e |\F|^{-k}.\]
We will bound the left hand side by 
$\max_{a_1 \neq 0} |\bias(a \cdot \X)|$.
Next, we will bound the latter by $\e |\F|^{-k}$, and deduce weak $\e$-regularity, in the following statement.

\begin{lemma}\label{lemma:bias-to-reg}
    Let $U \lneq V \le \Poly(\F_q)$ with $\dim(V)=k$. 
    Suppose $|\bias(X)| \le \e q^{-k}$ for all $X \in V\sm U$.
    Then any basis $\X \in \Forms^k$ of $V$ that contains a basis of $U$, 
    %with $X_1 \in V^{\uparrow} \sm U$,
    with $X_1 \notin U$ and $\deg(X_1)=\deg(\X)$,
    is weak $\e$-regular.
\end{lemma}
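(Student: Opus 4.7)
The plan is to use the standard Fourier-analytic expansion of the probability mass function of a polynomial map. For any $\Z \colon \F^n \to \F^k$, averaging the identity $\mathbb{1}[\Z(x)=y] = q^{-k}\sum_{a\in\F^k}\chi(a\cdot(\Z(x)-y))$ over uniformly random $x$ yields
\[\Pr[\Z=y] \;=\; q^{-k}\sum_{a\in\F^k}\chi(-a\cdot y)\,\bias(a\cdot\Z).\]
I would apply this twice, once to $\X$ and once to $\X'$, producing an expression for both $\Pr[\X=y]$ and $\Pr[\X'=y']$ as exponential sums over linear combinations of the components.

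The key cancellation is that for $a=(a_1,a')\in\F^k$ with $a_1=0$, one has $a\cdot\X = a'\cdot\X'$ and $a\cdot y = a'\cdot y'$, so the sub-sum over $\{a : a_1=0\}$ in the formula for $\Pr[\X=y]$ is, up to its $q^{-k}$ prefactor, precisely $q^{-(k-1)}\sum_{a'}\chi(-a'\cdot y')\bias(a'\cdot\X')$, i.e.\ exactly $q^{-1}\Pr[\X'=y']$. Subtracting therefore isolates the ``top'' part of the sum:
\[\Pr[\X=y] - q^{-1}\Pr[\X'=y'] \;=\; q^{-k}\sum_{a_1\ne 0}\chi(-a\cdot y)\,\bias(a\cdot\X).\]
The remaining task is to bound the right-hand side using the bias hypothesis; for this I need that each $a\cdot \X$ appearing in the sum lies in $V\setminus U$.

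This is where the two structural assumptions come in: $\X$ contains a basis of $U$, and $X_1\notin U$. Pick the basis so that some sub-tuple of $X_2,\ldots,X_k$ spans $U$; the remaining components, including $X_1$, descend to a basis of the quotient $V/U$. Consequently, a combination $a\cdot\X$ lies in $U$ only if its $X_1$-coefficient $a_1$ vanishes, so every term in the displayed sum satisfies $a\cdot\X\in V\setminus U$ and hence $|\bias(a\cdot\X)|\le \e q^{-k}$. Applying the triangle inequality, and noting that the number of $a\in\F^k$ with $a_1\ne 0$ is $(q-1)q^{k-1}<q^k$, gives
\[\big|\Pr[\X=y] - q^{-1}\Pr[\X'=y']\big| \;\le\; q^{-k}\cdot q^k\cdot \e q^{-k} \;=\; \e q^{-k},\]
which, together with the hypothesis $\deg(X_1)=\deg(\X)$, is exactly what weak $\e$-regularity requires.

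I do not expect a serious obstacle: this is a direct Fourier computation, and the only point demanding care is the bookkeeping that $a_1\ne 0$ forces $a\cdot\X\notin U$, which uses both hypotheses essentially. If one were to drop either hypothesis, the cancellation argument would still work but some terms with $a_1\ne 0$ might land in $U$, on which we have no control over the bias, so the argument would fail precisely at the step where the basis structure is used.
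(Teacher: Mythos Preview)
Your proof is correct and follows essentially the same approach as the paper: the same Fourier expansion of $\Pr[\X=y]$, the same cancellation of the $a_1=0$ terms against $q^{-1}\Pr[\X'=y']$, and the same key observation that $a_1\neq 0$ forces $a\cdot\X\notin U$ (the paper phrases this as $U\subseteq\Span(\X')$, which is equivalent to your quotient argument). The only cosmetic difference is that the paper routes the computation through the line reformulation $\Pr[\X\in\ell]=\Pr[\X'=y']$ and bounds by a $\max$ rather than a sum, but the content is identical.
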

That is, Lemma~\ref{lemma:bias-to-reg} requires of $\X$ that: $\X$ is a basis of $V$, 
$U \sub \Span(\X')$, $X_1 \notin U$, and $\deg(X_1)=\deg(\X)$.

It will be convenient to use a more geometric formulation of weak $\e$-regularity:
for every line $\ell \sub \F^k$ in the direction of $e_1$ (so $\ell=\{te_1+\z \vert t \in \F\}$ for some $\z \in \F^k$),\footnote{Indeed, any $\y \in \ell$ is of the form $\y=(t,\z')$ for some $t \in \F$, so $\Pr[\X \in \ell]=\Pr[\X'=\z']$ and $\z'=\y'$.}   
\begin{equation}\label{eq:line}
    \forall y \in \ell \colon |\Pr[\X=\y] - |\F|^{-1}\Pr[\X \in \ell]| \le \e |\F|^{-k}.  
\end{equation}

\begin{proof}[Proof of Lemma~\ref{lemma:bias-to-reg}]
We will use the standard fact that the distribution $(\Pr[\X=y])_{y \in \F^k}$ can be expressed in terms of biases, 
by using simple Fourier analysis.
Indeed, since\footnote{$\sum_{a \in \F^m} \chi(a \cdot v) = \sum_{a \in \F^m} \prod_{i=1}^m \chi(a_i v_i)
    %= \sum_{a_1 \in \F}\cdots\sum_{a_m \in \F} \prod_{i=1}^m \chi(a_i v_i)$}
    = \prod_{i=1}^m \sum_{a_i \in \F} \chi(a_i v_i)
    = \prod_{i=1}^m \mathbf{1}(v_i=0)|\F| = \mathbf{1}(v=0)|\F|^m$.} %for every $v \in \F^m$,
\begin{equation}\label{eq:exp-identity}
    \forall v \in \F^m \colon\quad \Exp_{a \in \F^m} \chi(a \cdot v) = \delta[v=0]
    := \begin{cases}
        1 & v = 0\\
        0 & v \neq 0
    \end{cases}\;,
\end{equation}
%A standard corollary is the following equality that holds for every $y \in \F^m$:
we have that for every $y \in \F^k$:
\begin{equation}\label{eq:pr-point}
\Pr_x[\X(x)=y] 
= \Exp_x \,\delta[\X(x)-y=0]
= \Exp_x \Exp_{a \in \F^k} \chi(a \cdot (\X(x)-y))
= \Exp_{a \in \F^k} \bias(a \cdot (\X-y)).
\end{equation}

Recall that $\X$ contains a basis for $U$, and $X_1 \notin U$.
We claim that for every $a \in \F^k$ with $a_1 \neq 0$, we have that $a \cdot \X \notin U$. 
Indeed, write $a\cdot \X = a_1X_1 + X'$ with $X' \in \Span(\X')$. Our assumption on $\X$ implies that $U \sub \Span(\X')$, so if $a \cdot \X \in U$ then $a_1X_1 \in U-X' \sub \Span(\X')$, a contradiction as $a_1 \neq 0$.
%Indeed, $a\cdot\X \notin \Span\X'$, since $a_1X_1 \notin \Span\X'$, and our assumption on $\X$ implies that $U \sub \Span\X'$, so $a\cdot\X \notin U$.
Using our assumption on $U$, we deduce that
\begin{equation}\label{eq:bias-assumption}
    \forall a \in \F^k \colon \quad a_1 \neq 0 \quad\Rightarrow\quad 
    |\bias(a \cdot \X)| \le \e|\F|^{-k}.
\end{equation}

Put $v=e_1 \in \F^k$.
Let $\ell \sub \F^k$ be a line in the direction of $v$, so that $\ell = \{tv+z \vert t \in \F\}$ for some $z \in \ell$.
We claim that the following equality holds:
\begin{equation}\label{eq:pr-line}
\Pr_x[\X(x) \in \ell] 
= \Exp_{\substack{a \in \F^k\colon\\a \cdot v = 0}}\, \bias(a \cdot (\X-z)).
\end{equation}
Indeed, 
\begin{align*}
    \Pr_x[\X(x) \in \ell]
    %&= \sum_{z \in \ell} \Pr_x[\vec{Q}(x) = z]
    &= \sum_{t \in \F} \Pr_x[\X(x) = tv+z]
    = \sum_{t \in \F} \Exp_{a \in \F^k} \Exp_x \chi(a \cdot (\X(x)-(tv+z)))\\
    &= \Exp_{a \in \F^k} \big( \sum_{t \in \F} \chi(t (a\cdot v)) \big) \Exp_x \chi(a \cdot (\X(x)-z))\\
    &= \Exp_{a \in \F^k} \big(|\F|\delta[a\cdot v = 0]\big) \bias(a \cdot (\X-z))
    = \frac{1}{|\F|^{k-1}}\sum_{\substack{a \in \F^k\colon\\a \cdot v = 0}} \bias(a \cdot (\X-z))
    %= \Exp_{\substack{a \in \F^m\\a \cdot v = 0}} \bias(a \cdot (\vec{Q}-y)) .
\end{align*}
where~(\ref{eq:pr-point}) is used in the second equality, and~(\ref{eq:exp-identity}) is used in the penultimate equality with $m=1$.
Combining the point equality~(\ref{eq:pr-point}) and the line equality~(\ref{eq:pr-line}), we deduce (for $y=z$)
\[\Pr_x[\X(x)=z] 
    = |\F|^{-1}\Pr_x[\X(x) \in \ell]
    + |\F|^{-k}\sum_{\substack{a \in \F^k\colon\\a \cdot v \neq 0}}\, \bias(a \cdot (\X-z)).\]
Now, we bound the magnitude of the rightmost term as follows:
\[|\F|^{-k}\Big|\sum_{\substack{a \in \F^k\colon\\a \cdot v \neq 0}}\, \bias(a \cdot (\X-z))\Big|
\le \max_{\substack{a \in \F^k\colon\\a \cdot v \neq 0}} |\bias(a \cdot (\X-z))|
= \max_{\substack{a \in \F^k\colon\\a \cdot v \neq 0}} |\bias(a \cdot \X)|.\]
Therefore,
\[\Big|\Pr_x[\X(x)=y] - |\F|^{-1}\Pr_x[\X(x) \in \ell]\Big| 
\le \max_{\substack{a \in \F^k\colon\\a \cdot v \neq 0}} |\bias(a \cdot \X)| 
\le \e|\F|^{-k},\]
using~(\ref{eq:bias-assumption}) in the last inequality.
Thus, $\X$ is weak $\e$-regular, as desired.
\end{proof}

\subsection{Putting everything together}\label{subsec:together}

The proof of the weak regularity lemma will rely on 
Theorem~\ref{theo:rankular}, Theorem~\ref{theo:MZ}, and Lemma~\ref{lemma:bias-to-reg}.
Careful work will be required to convert rank-regularity, which gives information only about the top-degree part (recall Definition~\ref{def:rank}) of the polynomials in our linear space, to weak regularity, which takes into account polynomials of all degrees, while still retaining the homogeneity of the parts.
%Moreover, rank-regularity depends only on the linear space spanned by the parts, whereas weak regularity depends on the particular parts (and so does not allow the first part to be repeated, for example).

\begin{proof}[Proof of Theorem~\ref{thm:WRL}]
    Put $g(t,d,m)=((2t+1)dm)^{2^d}$.
    We claim that the following holds 
    for every polynomial $P \in \Poly_d(\F)$ and every $1 \le k \le g(t,d,m)$:
    \begin{equation}\label{eq:bias-to-rk}
        |\bias(P)| \ge q^{-ck} \quad\Rightarrow\quad \rk(P) < tk
        %|\bias(X)| \ge \e^k q^{-k} \quad\Rightarrow\quad \rk(X) < tk
    \end{equation}
    for some $t = 2^{d^{1+o(1)}}c^{1+o(1)}\log(2m)$.
    %$t = 2^{d^{1+o(1)}}(c\log(2m))^{1+o(1)}$.
    Indeed, by Theorem~\ref{theo:MZ}, $\rk(P) \ge tk$ implies that
    $|\bias(P)| \le q^{-c_d tk/\log_2(tk+1)}$ with $c_d \ge 2^{-d^{1+o(1)}}$,
    that is,
    \begin{align*}
        (-\log_q|\bias(P)|)/k
        &\ge \frac{c_d t}{\log_2(tk+1)}
        \ge \frac{c_d t}{\log_2(tg(t,d,m)+1)}\\
        &\ge \frac{c_d t}{(2^d+1)\log_2((2t+1)dm)} .
        %> c , %= (-\log_q( q^{-k(r+1)})
        %= -\log_q(\e^k q^{-k}) ,
    \end{align*}
    It remains to show that the latter is greater than $c$, which would imply the desired bound $|\bias(P)| < q^{-ck}$.
    Take $t=\lceil 8DL \rceil$, for $D=c(2^d+1)/c_d$ and $L=\log_2(32Ddm)$.
    Then $t = D^{1+o(1)}\log(2dm)$, implying $t$ has the aforementioned desired asymptotics. 
    Assuming $c_d \le 1$ as we may, we have $D \ge 1$ and $L \ge 5$.
    Thus, $(2t+1)dm \le 32DL \cdot dm \le (32Ddm)^2 = 2^{2L}$, 
    so 
    \[\frac{c_d t}{(2^d+1)\log_2((2t+1)dm)} \ge \frac{c_d \cdot 8DL}{(2^d+1)\cdot 2L} = 4c > c,\]
    as needed.
    
    %that is, $|\bias(P)| < q^{-ck}$.

    With $t$ as above, apply Theorem~\ref{theo:rankular} to find a $t$-rank-regular decomposition $\Y \sub \Forms$ of $\P$ of size at most $g(t,d,m)$
    %\[g(t,d,m) \le (2mr)^{2^{d(1+o(1))}},\]
    %\[s \le ((2t+1)dm)^{2^d} = (2m(r + 1))^{2^{d(1+o(1))}},\]
    so that $\P \sub \F[\Y]$ is a minimal decomposition and $\Y$ is $t$-rank-regular.
    For simplicity, assume without loss of generality that $\Y$ is linearly independent\footnote{This preserves $\F[\Y]$ and minimality, and $\rk(X) \ge t\cdot\dim\Span(\Y)$ outside of a strict subspace.} (and nonempty, as otherwise there is nothing to prove).
    Put $V= \Span\Y$ and $k=\dim V$ ($>0$).
    To finish, we will use $\Y$ to obtain a weak $\e$-regular decomposition of $\P$ of size 
    \[k \le g(t,d,m) \le (2mc)^{2^{d(1+o(1))}}.\]
    
    Consider the subspace of $V$ given by
    \[U =  \Span\{ X \in V \setmid \rk(X) < tk\},\]
    and the subset of $V$ given by
    \[U_\e = \{ X \in V \setmid |\bias(X)| \ge q^{-ck}\}.\]
    Then:
    \begin{enumerate}
        \item\label{item:U-hom} $U$ is homogeneous. This is by Lemma~\ref{lemma:U-hom} and since $V$ is (it is spanned by $\Y \sub \Forms$).
        \item\label{item:U-arrow} $V^\uparrow \sm U \neq \emptyset$. 
        Indeed, by Lemma~\ref{coro:top-rank-reg}, the $t$-rank-regularity of $\Y$ implies $U_{tk}(V^\uparrow) \neq V^\uparrow$;
        by Lemma~\ref{lemma:U-top}, $V^\uparrow \sm U = V^\uparrow \sm (U \cap V^\uparrow)
        = V^\uparrow \sm U_{tk}(V^\uparrow) \neq \emptyset$.
        \item\label{item:U-bias} $U_\e \sub U$.
        Indeed, for $X \in U_\e$
        we have $\rk(X) < tk$ by~(\ref{eq:bias-to-rk}), 
        using that $k \le g(t,d,m)$ and that $\deg(X) \le \deg(\Y) \le d$ (see Corollary~\ref{coro:deg-X}).
    \end{enumerate}
    Let $X_1 \in V^\uparrow \sm U$ be nonzero, using~\ref{item:U-arrow} (as $V^\uparrow \sm U \neq \emptyset, \{0\}$),
    add to it a homogeneous basis of $U$, using~\ref{item:U-hom},
    and extend this independent set into a homogeneous basis $\X \in \Forms^k$ of $V$.
    Since $U$ satisfies $(V \sm U) \cap U_\e = \emptyset$ by~\ref{item:U-bias}, we have $|\bias(X)| \le q^{-ck}$ for every $X \in V \sm U$.
    Therefore, we can apply Lemma~\ref{lemma:bias-to-reg} on $U \lneq V$, and $\X$ (with $\e=q^{-(c-1)k}$), and deduce that $\X$ is weak $\e$-regular.

    Finally, since $\Span\Y = V = \Span\X$, 
    we have that $\P \sub \F[\Y] = \F[\X]$; 
    moreover, by definition,
    since $\P \sub \F[\Y]$ is a minimal decomposition,
    so is $\P \sub \F[\X]$.
    In particular, $\P \nsubseteq \F[\X']$, since $\X' \sub \Forms$ satisfies $\Span\X' \subsetneq \Span\X$.
    Thus, $\P \sub \F[\X]$ is a weak $\e$-regular decomposition.
    The size of this decomposition is $k$, %($\le |\Y|$),
    which, as discussed above, completes the proof.
\end{proof}

\begin{remark}\label{remark:inf-field}
    Although we do not pursue it here, one can try to prove an analogue of our weak regularity lemma for infinite fields. 
    This would require replacing the probabilistic notion of approximate independence in Definition~\ref{def:reg}, 
    say by a geometric or by a commutative algebra analogue (see, e.g.,~\cite{ErmanSaSn19,LampertZi22}), and adapting the proof, including the notion of bias of a polynomial used in Section~\ref{subsec:bias-regularity}, accordingly.
    To retain the applications of Section~\ref{sec:Apps} would, however, require stronger control than algebraic independence, since the latter alone does not quite guarantee that the image contains entire lines.
    For the structure-vs-randomness theorem we rely on (Theorem~\ref{theo:MZ}), a geometric analogue for forms over algebraically-closed and other infinite fields
    has long been known~\cite{Schmidt85} (see also~\cite{KazhdanLaPo24}) with, in fact, a slightly better, linear bound.
\end{remark}

\section{Applications}\label{sec:Apps}

In this section we obtain several results along the theme that one can deduce strong bounds on the algebraic structure of polynomials, and polynomial maps, based solely on their image.
The proofs use the weak regularity lemma.

We begin with the following standard quantification of algebraic structure, which constrains the degrees of the polynomials in the decomposition.
\begin{definition}[$\rk_t$]\label{def:t-rank}
    Call a polynomial \emph{$t$-reducible} if it is the product of polynomials of degree at most $t$.
    Denote, for $P \in \Poly(\F)$ with $\deg(P)=d$,
    \[\rk_t(P) = \min\big\{ r\, \big\vert \exists R_1,\ldots,R_r \in \Poly_d(\F) \text{ $t$-reducible} \colon P = R_1+\cdots+R_r\big\}. \]
    For completeness, set $\rk_0(P)=0$ for constant $P$. 
    %One extends $\rk_t$ to polynomial tuples as follows.
    More generally, for a tuple $\P \in \Poly^m$ with $\deg(\P)=d$, 
define $\rk_t(\P)$ to be the least number $r$ of $t$-reducible polynomials $R_1,\ldots,R_r \in \Poly_d$ such that $\P \sub \Span\{R_1,\ldots,R_r\}$.
\end{definition}
This definition of $\rk_t(P)$ was used by Karam~\cite{Karam23}, and is closely related to an earlier definition of Green and Tao~\cite{GreenTao09}, $\rank_t(P)$, which expresses $P$ over a finite field as any function of $\rank_t(P)$ polynomials of degree at most $t$. %(over a finite field). 
Note that $\rank_t(P) \le O(\deg(P)/t)\rk_t(P)$, as each $t$-reducible polynomial can be expressed in terms of $O(\deg(P)/t)$ polynomials of degree at most $t$; thus, proving an upper bound on $\rk_t(P)$, as we will do below, would imply a similar upper bound on $\rank_t(P)$.

\begin{remark}
    If $\rk_t(P)$ is small, for $t$ bounded away from $\deg(P)$, then
    $P$ has an unusually small Hamming distance to some polynomial of considerably smaller degree (unlike most polynomials of a given degree~\cite{BHL12,BGY20}).
    More precisely, by a standard Fourier-analytic argument (e.g.~\cite{Janzer19}), 
    every $P \in \Poly_k(\F_p)$ with $\rank_t(P)=r$ has correlation  
    $|\bias(P-Q)| \ge p^{-r/2}$ ($\ge p^{-O(k\rk_t(P)/t)}$) %$|\bias(P-Q)| \ge q^{-\rank_t(P)}$ ($\ge q^{-t\rk_t(P)}$) 
    with some $Q \in \Poly_t(\F_p)$,
    and therefore $P$ agrees with $Q$ on a $\frac{1}{p}(1+\d)$-fraction of the inputs with $\d\ge\Omega(p^{-r/2})$.
    Like the Gowers inverse theorem, this type of result is closely related to polynomiality testing in the low-soundness/list-decoding regime.  
    %\footnote{In the context of the Gowers inverse theorem, this is a nontrivial bound on the correlation norm: $\norm{f}_{u^{t+1}} \ge q^{-\frac12\rank_t(P)}$ with $f=\chi\circ P\colon \F^n\to\C$, for a nontrivial additive character $\chi$.}
    %polynomial $Q$ of degree at most $t$.\footnote{In the context of the Gowers inverse theorem, this is denoted $\norm{\omega^P}_{u^{t+1}} \ge q^{-\frac12\rank_t(P)}$.}
\end{remark}

%for every component $P_i$ of $\P$.
%Note that for $m=1$, this indeed reduces to Definition~\ref{def:t-rank}.
%the definition of $\rk_t$ for polynomial.

An application of the weak regularity lemma that is quite simple to state involves polynomial maps whose image is a line-free set. Recall that a line in $\F_q^m$ is a set of $q$ elements of the form $\{u+tv \,\vert\, t \in \F_q\}$ for some $u,v \in \F_q^m$ ($v \neq 0$).

\begin{remark}\label{remark:cap-sets}
    %Just for the special case of $\F_3$, there is a vast literature on subsets of $\F_3^m$ avoiding a line, also known as cap sets. 
    A subset of $\F_3^m$ avoiding a line is known as a cap set. 
    The most basic example of a cap set is $\{0,1\}^m$;
    this cap set leads to the following easy observation:
    for any (low-degree) polynomial map $\P\colon \F_3^n\to\F_3^m$, in any number of variables $n$, 
    its ``square'' $\P^2:=(P_1^2,\ldots,P_m^2) \colon \F_3^n\to\F_3^m$ is a (low-degree) polynomial map with a line-free image. 
    See, e.g.,~\cite{EHPS24} and references within for more constructions of sets free of $3$-term arithmetic progressions.
    % Over $\F_p$, for any prime $p \ge 3$, another standard construction is analogous to Behrend's construction (see, e.g., Lemma~17 in~\cite{FP19}),
    % %More generally over $\F_p$, for any prime $p \ge 3$,
    % %consider for example the 
    % %it corresponds to the following observation:
    % which gives the following criterion:
    % if $\Image(P_i) \sub \{0,1,\ldots,(p-1)/2\}$ for every $i$, and the real number $\sum_{i=1}^m P_i^2$ is constant, then the image of $\P=(P_1,\ldots,P_m)$ contains no three-term arithmetic progression $\{u,u+v,u+2v\}$ ($v\neq 0$),\footnote{Indeed, if $u+w=2v$ with $u,v,w \in \Image(\P)$ then the equality holds over the reals and, as real vectors, their squared Euclidean norms are equal. By the parallelogram law, 
    % $\norm{u-w}^2 = 2\norm{u}^2+2\norm{w}^2-\norm{u+w}^2 
    % = 2\norm{u}^2+2\norm{w}^2-4\norm{v}^2 = 0$, 
    % implying $u=w$.} and thus no lines $\{u+tv \setmid t \in \F_p\}$ ($v \neq 0$). See, e.g.,~\cite{EHPS24} and references within for more constructions of sets free of $3$-term arithmetic progressions.

    Over fields other than $\F_3$, one can also easily construct a low-degree polynomial map whose image is a cap set, meaning it contains no three collinear points (so not just free of $3$-term arithmetic progressions $\{u,u+v,u+2v\}$). 
    %The field of finite geometry gives us other 
   % Other low-degree constructions from finite geometry give even stronger properties than being a cap set, such as having no three collinear points.
    %It is not hard to obtain a low-degree construction whose image has a stronger property than being a cap set, namely, there being no three points on a line. 
    For example, for any (low-degree) polynomial map $\P \colon \F^n \to \F^m$ with $|\F| \ge 3$, the image of the ``parabola graph'' $(\P,\P^2) \colon \F^n \to \F^{2m}$ is a cap set. Indeed, 
    if $(w,w^2) = c(v,v^2) + (1-c)(u,u^2)$ for distinct $u,v,w \in \Image(\P)$ and $c \in \F$ (so $c \notin \{0,1\}$), where $x^2$ denotes coordinate-wise square, then equating the first $m$ coordinates gives $w=cv+(1-c)u$, and so equating the last $m$ coordinates implies, after manipulation, $c(1-c)(v_i-u_i)^2=0$ for every $i \in [m]$; thus, $v=u$, a contradiction.
    %$(cv+(1-c)u)^{(2)} = cv^{(2)} + (1-c)u^{(2)}$, that is,
    %$c(1-c)v^{(2)} + c(1-c)u^{(2)} - 2c(1-c)uv = 0$, or $c(1-c)(v-u)^{(2)}=0$, that is, $v=u$.
    We refer to~\cite{EL23,Hir98,HT16} for more on the rich topic of constructions of cap sets in finite geometry.
    %construct a low-degree polynomial maps with the stronger property that its image contains  
    %Another similar construction of a low-degree polynomial map over $\F_p$ ($p \ge 3$) with 
\end{remark}

Henceforth, we denote
\[f(m,d) = (2m)^{2^{d(1+o(1))}}.\]

\begin{theorem}\label{thm:surj}
    Let $\P \in \Poly_d^m(\F)$ with $d < \ch(\F)$.
    If $\Image\P \sub \F^m$ does not contain a line then
    $\rk_{d/2}(\P) \le f(m,d)$.
\end{theorem}
%In particular, the $m=1$ case of Theorem~\ref{thm:surj} is about non-surjective polynomials. 
%.$P \in \Poly_d$.

Theorem~\ref{thm:surj} is a special case of a theorem about the degree of curves contained in the image of a polynomial map.
We next recall the definition of the \emph{univariate degree} of a polynomial map $\P$, 
which can be thought of as a combinatorial measure of ``complexity'' of the image of $\P$.
In particular, for a single polynomial $P$, it can be thought of as a measure of surjectivity---the larger it is, the further $P$ is from being surjective.

\begin{definition}[Univariate degree]\label{def:univariatedegree}
    For $\P \colon \F^n \to \F^m$, 
    %For $\P \in \Poly(\F)^m$, we denote
    \[\udeg(\P)=\min\{\deg(\U) \setmid \U \colon \F\to\F^m \text{ non-constant} \colon \Image\U \sub\Image\P\}.\]
    By convention, if $\P \colon \F^n \to \F^m$ is constant as a map then $\udeg(\P) = \infty$.\footnote{We require $U$ to be non-constant as a map, though note that a (minimum-degree) witness to $\udeg(\P)$, for $\P$ a non-constant map, must be a reduced polynomial, for which there is no distinction between being non-constant as a map or as a polynomial.} %(that is, individual degrees smaller than $|\F|$).}
    %\footnote{We require $\U$ to be non-constant as a polynomial. This is in fact equivalent to requiring $\U$ to be non-constant as a function, since we always have $\udeg(\P) < |\F|$ (for non-constant $\P$): if $a \neq b \in \Image\P$ then $\U(t)=a+(b-a)t^{|\F|-1}$ has $\Image \U \sub \Image \P$ and is non-constant.}
\end{definition}

We include a brief list of properties of univariate degree in Section~\ref{subsec:ud-properties} below.

\begin{remark}
    The special case of Definition~\ref{def:univariatedegree} where $(n,m,\udeg(P))=(1,1,1)$ is already very interesting. Indeed, note that for any finite field $\F$, a \emph{univariate} polynomial $P \colon \F\to\F$ satisfies $\udeg(P)=1$ if and only if $P$ induces a permutation of $\F$; such polynomials are naturally called \emph{permutation polynomials}, 
    and have attracted much attention 
    %and are a topic of considerable interest 
    (see e.g.~\cite{Hou15} and the references within).
\end{remark}

Our main result in this section is as follows.

\begin{theorem}\label{thm:rk-bd}
    For every $\P \in \Poly_d^m(\F)$ with $d < \ch(\F)$
    and $u=\udeg(\P)$, we have 
    $\rk_{d/u}(\P) \le f(m,d)$.
\end{theorem}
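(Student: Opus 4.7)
The plan is to apply Theorem~\ref{thm:WRL} to obtain a small weak regular decomposition of $\P$, and then convert the resulting structural information into a bound on $\rk_{d/u}(\P)$, where $u=\udeg(\P)$. Fix a small constant $\e$ (say $\e=q^{-2}$) and apply Theorem~\ref{thm:WRL} to obtain $\P=\vec{F}(\X)$ with $\X=(X_1,\ldots,X_k)\in\Forms^k$ weak $\e$-regular, $\P\nsubseteq\F[\X']$, $\deg(X_1)=\deg(\X)$, and size $k\le f(m,d)$. First I would normalize $\vec{F}$ so that every monomial $X_1^{a_1}\cdots X_k^{a_k}$ of $\vec{F}$ has ``weighted degree'' $\sum_j a_j\deg(X_j)$ at most $d=\deg(\P)$. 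This is possible because the $X_j$ are homogeneous, so each monomial substitutes to a form whose degree equals its weighted degree; since forms of distinct degrees are linearly independent, monomials of weighted degree greater than $d$ must sum to zero after substitution, and can be dropped without changing $\vec{F}(\X)$. After normalization, each monomial satisfies $a_1\deg(X_1)\le d$, giving $\deg_1(\vec{F})\le d/\deg(\X)$; moreover $\deg_1(\vec{F})\ge 1$ since $\P\nsubseteq\F[\X']$.

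Next I would use weak $\e$-regularity to produce a non-constant univariate curve of degree at most $\deg_1(\vec{F})$ with image in $\Image\P$. Set $A=\{y'\in\F^{k-1}:\Pr[\X'=y']>\e q^{-(k-1)}\}$; summing probabilities gives $\Pr[\X'\in A]\ge 1-\e$. For each $y'\in A$, the defining inequality of weak $\e$-regularity yields $\Pr[\X=(y_1,y')]>0$ for every $y_1\in\F$, so the line $\{(y_1,y'):y_1\in\F\}$ lies in $\Image\X$, and hence the univariate polynomial map $\U_{y'}(y_1):=\vec{F}(y_1,y')$ satisfies $\Image\U_{y'}\sub\Image\P$ and has degree at most $\deg_1(\vec{F})$. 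The main obstacle is to show that \emph{some} such $\U_{y'}$ is non-constant; I expect to argue by contradiction. If every $y'\in A$ makes $\U_{y'}$ constant in $y_1$, then writing $\vec{F}(y_1,y')=\sum_j y_1^j\vec{C_j}(y')$, we find $\vec{C_j}(y')=0$ for every $y'\in A$ and every $j\ge 1$; consequently $\P-\vec{C_0}(\X')$ vanishes on $\{x:\X'(x)\in A\}$, a set of density at least $1-\e$ in $\F^n$. The normalization ensures $\deg(\vec{C_0}(\X'))\le d$, so by the Schwartz--Zippel bound applied componentwise, $\P-\vec{C_0}(\X')$ must be identically zero (as $1-\e>d/q$, because $\e=q^{-2}$ and $d<\ch(\F)\le q$), giving $\P\in\F[\X']$, which contradicts the definition of a weak regular decomposition.

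This yields $u=\udeg(\P)\le\deg_1(\vec{F})\le d/\deg(\X)$, so $\deg(\X)\le d/u$. Consequently every monomial $X_1^{a_1}\cdots X_k^{a_k}$ of the normalized $\vec{F}$ is a product of forms of degree at most $d/u$, hence $(d/u)$-reducible in the sense of Definition~\ref{def:t-rank}. Since each component of $\P$ is an $\F$-linear combination of these monomials, and the number of tuples $(a_1,\ldots,a_k)$ with $\sum_j a_j\le d$ is at most $\binom{k+d}{d}\le(k+d)^d$, we obtain
\[\rk_{d/u}(\P)\le (k+d)^d \le (f(m,d)+d)^d \le f(m,d),\]
the last inequality holding because the extra factor $d$ in the exponent is absorbed into the $(1+o(1))$ exponent of $f(m,d)=(2m)^{2^{d(1+o(1))}}$.
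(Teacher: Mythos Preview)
Your proof is correct and follows essentially the same approach as the paper: apply the weak regularity lemma, normalize $\vec{F}$ so that monomials have weighted degree at most $d$ (the paper's Lemma~\ref{lemma:semi-hom}), use weak regularity together with Schwartz--Zippel to find a line in $\Image\X$ on which $\vec{F}$ is non-constant, deduce $\deg(\X)\le d/u$, and count monomials. The only tactical difference is in locating the non-constant line: the paper applies Schwartz--Zippel directly to a nonzero coefficient $C_j(\X')$ to pinpoint a single popular $y'$ with $C_j(y')\neq 0$ (Lemma~\ref{lemma:S-Z} and Corollary~\ref{coro:nonconstant-line}, with $\e=1-d/q$), whereas you argue by contradiction over all popular $y'$, applying Schwartz--Zippel to $\P-\vec{C_0}(\X')$; both arguments are equally valid and yield the same bound.
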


We highlight two extreme special cases. 
One is where $\udeg(\P)$ is very large, namely $\udeg(\P)>\deg(\P)/2$,
in which case we directly obtain a small depth-$3$ (homogeneous) arithmetic formula for $\P$:
each component $P_i$ is expressed as a linear combination of (the same) few products of  degree-$1$ polynomials.
%each component $P_i$ is expressed as a sum of products of (the same) few linear forms.
\begin{corollary}
    For $\P \in \Poly_d^m(\F)$ with $d < \ch(\F)$,
    if $\udeg(\P)>d/2$ then
    $\rk_{1}(\P) \le f(m,d)$.
%    
    % For every $P \in \Poly_d(\F)$ with $d < \ch(\F)$,
    % if $\udeg(P)>d/2$ then
    % $\rk_{1}(P) \le 2^{d^{O(1)}}$.
\end{corollary}

\newcommand{\A}{\vec{A}}

The other extreme special case is Theorem~\ref{thm:surj}, which is the special case where $\udeg(\P)>1$.
Indeed, $\udeg(\P) = 1$ means $\Image\U \sub \Image \P$ for some univariate $\U \colon \F\to\F^m$ with $\deg(\U) = 1$, and observe that $\deg(\U)=1$ if and only if $\Image\U$ is a line.
Indeed, $\deg(\U)=1$ means $U_i(t) = ta_i+b_i$ with $a_i,b_i \in \F$, or equivalently, $\U(t) = ta+b$ with $a,b \in \F^m$, where $a \neq 0$.
Thus, if $\Image\P$ does not contain a line then $\udeg(\P) \ge 2$,
so Theorem~\ref{thm:rk-bd} implies $\rk_{d/2}(\P) \le f(m,d)$ (that is, Theorem~\ref{thm:surj}).

\subsection{Properties of univariate degree}\label{subsec:ud-properties}

Recall the definition of univariate degree for a polynomial map $\P \colon \F^n \to \F^m$:
\[\udeg(\P)=\min\{\deg(\U) \setmid \U \colon \F\to\F^m \text{ non-constant} \colon \Image\U \sub\Image\P\}.\]

\begin{example}\label{ex:udeg-power}
    The univariate $x\mapsto x^k \colon \F_p \to \F_p$ has $\udeg(x^k) = \gcd(k,p-1)$.
    To see this, note that $\Image(x^k)=\Image(x^g)$ with $g:=\gcd(k,p-1)$, so $\udeg(x^k) = \udeg(x^g) \le g$.
    To prove $\udeg(x^g) \ge g$, write $ag=p-1$, and let $U$ be a nonconstant univariate with $\Image(U) \sub \Image(x^g)$. If $\deg(U)<g$ then $A:=U^a$ satisfies $\deg(A) < ag = p-1$ and $\Image(A) \sub \Image(x^{p-1}) = \{0,1\}$; this is a contradiction ($0 = \sum_{x \in \F_p} A(x) = |\{x \in \F_p \setmid A(x) \neq 0\}| \pmod{p}$, so $A$, and $U$, are constants).
\end{example}

We first show that one cannot in general hope to bound $\rk_t(P)$ independently of $n$ for $t < \deg(P)/\udeg(P)$.
We have $\udeg(Q^k) \ge \udeg(x^k)$ for any (multivariate) polynomial $Q$, with equality for any surjective $Q$ (see Proposition~\ref{prop:udeg-prop} (\ref{item:udeg-prop-comp}));
moreover, %it is not hard to show that 
$\udeg(x^k) = \gcd(k,p-1)$ assuming $\F=\F_p$ prime. 
%(as $\Image(x^t)=\Image(x^g)$ with $g=\gcd(t,p-1)$).
Given a prime $p$, a divisor $k$ of $p-1$, and an integer $t \ge 2$,
let $Q \in \Poly_t(\F_p)$ be a random $n$-variate polynomial over $\F_p$ of degree $t$,
and set $P=Q^k$.
Since $Q$ is surjective with high probability, 
we have $\udeg(P) = \udeg(x^k) = k$ (recall Example~\ref{ex:udeg-power}), 
so $\deg(P)/\udeg(P) = t$.
On the other hand, with high probability and since $\deg(Q)=t$, we have that $\rk_{t-1}(P)$ is unbounded as $n \to \infty$ (see, e.g., Example~1.4 in~\cite{Karam23} or~\cite{BHL12,BGY20}).
Let us note that $\rk_t(P) \le 1$, since $P$ is a product of polynomials of degree at most $t$.
We also note that similar arguments lead to the same conclusion about $t=\deg(P)/\udeg(P)$ being the threshold for Green-Tao's notion $\rank_t(\P)$ as well.

% Thus, for $P=Q^t$ we have $\udeg(P) \ge t$ for any $t$ dividing $p-1$.
% In this case, $\deg(P)/\udeg(P) \le \deg(Q)$, with equality if $Q$ is surjective.
% Now, if we take $Q$ to be a random polynomial $Q \colon \F^n \to \F$, then we have that $\rk_{\deg(Q)-1}(P)$ is unbounded (as $n \to \infty$; see Example~1.4 in~\cite{Karam23}).

Below is a brief list of observations regarding the notion of univariate degree, which we include for the sake of completeness.
For simplicity, we state them for the case of a single polynomial ($m=1$).
%, though they are not hard to generalize to arbitrary polynomial maps.
We will use the notation $\overline{P}$ for the unique \emph{reduced} polynomial equal to $P$ as a function, where reduced means that each variable has degree smaller than $|\F|$ (since $x^{|\F|} \equiv x$ as functions over $\F$).\footnote{That is, $\overline{P} = P \pmod{x_1^{|\F|}-x_1,\ldots,x_n^{|\F|}-x_n}$.}
Observe that $\udeg(P)=\udeg(\overline{P})$, since $\Image(P)=\Image(\overline{P})$.

\begin{proposition}\label{prop:udeg-prop}
For $P \in \Poly(\F)$, non-constant as a map,
the univariate degree $\udeg(P)$ satisfies the following properties: 
\begin{enumerate}
    \item $\udeg(P) \leq \min_i \deg_i(P)$ ($\le \deg(P)$),
    over all $i$ with $\deg_i(\overline{P})>0$.
    %\item $\udeg(P)=1$ if and only if $P$ is surjective.
    %, since all degree-$1$ univariate polynomials are surjective. 
    \item $\udeg(P) < |\F|$.  
    \item\label{item:udeg-prop-comp} $\udeg(F(\X)) \ge \udeg(F)$, for any $F \in \Poly$, with equality if $\vec{X}$ is surjective.
    %\item $\udeg(U) \le \deg(U)$ for any univariate $U$.
    %Note that this inequality can be strict; for example, every permutation polynomial $U$ has $\udeg(U)=1$.
    %\item $\udeg(F \circ \vec{Q}) = \udeg(F)$ if $\vec{Q}$ is surjective.
    %Indeed, by assumption, $\Image(F) = \Image(P)$, and $\udeg(P)$ depends only on $\Image(P)$.
    %
    %\item $\udeg(U \circ Q) \le \deg(U)$ if $U$ is univariate and $Q$ is surjective.
    %If $P=U\circ Q$ with $U$ univariate and $Q$ surjective, then 
    %$\udeg(P) \le \deg(U)$. 
    %Indeed, $\Image(U) \sub \Image(P)$ (in fact, $\Image(P)=\Image(U)$);
    %alternatively, combine the last two items.
    %so if a univariate $U$ satisfies $\Image(U) \sub \Image(F)$ then $\Image(U) \sub \Image(P)$.
    %\item By convention, if $P$ is constant then $\udeg(P) = \infty$. 
    \item $\udeg(aP+b) = \udeg(P)$ for any $a,b \in \F$ with $a \neq 0$. 
    \item $\udeg(P^2) \ge 2$ if $\ch(\F) > 2$, with equality if $P$ is surjective.
    %\item $\udeg(Q^{q-1})$.
    %\item $\udeg(P)=|\F|-1$ if $|\Image(P)|=2$.
\end{enumerate}
\end{proposition}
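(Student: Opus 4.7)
Each property follows either from a direct choice of a candidate curve, an elementary observation about images, or the paper's conventions on polynomial representation. My plan is to address them in the order listed, reusing earlier parts when convenient; throughout, minima such as $\min_i \deg_i(P)$ are understood to be taken over indices $i$ for which $P$ genuinely depends on $x_i$ (and we tacitly assume $P$ is non-constant, as otherwise $\udeg(P) = \infty$ and all six bounds are vacuous or trivial).

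For property~1, for each such $i$ I would choose $a \in \F^n$ so that the curve $U(t) = P(a + t e_i)$ has degree exactly $\deg_i(P)$; such $a$ exists because the coefficient of $x_i^{\deg_i(P)}$ in $P$, viewed as a polynomial in the remaining variables, is not identically zero. This $U$ is non-constant with $\Image U \sub \Image P$, yielding $\udeg(P) \le \deg_i(P)$, and hence $\udeg(P) \le \min_i \deg_i(P) \le \deg(P)$. Property~2 is then immediate: each candidate curve is a univariate polynomial in $\Poly(\F)$, so by the paper's conventions its variable has individual degree at most $|\F|-1$; together with the existence of at least one candidate (from property~1), this gives $\udeg(P) \le |\F|-1 < |\F|$.

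For property~3, the containment $\Image F(\X) \sub \Image F$ shows that every candidate curve for $F(\X)$ is also a candidate for $F$, yielding $\udeg(F) \le \udeg(F(\X))$; when $\X$ is surjective, $\Image F(\X) = F(\X(\F^n)) = F(\F^k) = \Image F$, giving equality. Property~4 is immediate by taking the candidate curve to be $U$ itself. Property~5 follows from the identity $\Image(aP+b) = a\,\Image P + b$ together with the fact that, for $a \neq 0$, the map $U \mapsto aU+b$ is a degree-preserving, non-constancy-preserving bijection between candidate curves for $P$ and for $aP+b$.

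Property~6 is the only one requiring a genuine field-theoretic argument. Suppose toward contradiction that $\udeg(P^2) = 1$; then some non-constant $U(t) = at+b$ satisfies $\Image U \sub \Image P^2$. But a non-constant degree-$1$ polynomial $\F \to \F$ is a bijection, so $\Image U = \F$, which would force every element of $\F$ to be a square---contradicting $\ch(\F) > 2$, where the nonzero squares form a subgroup of index $2$ in $\F \setminus \{0\}$. For the equality when $P$ is surjective, the curve $U(t) = t^2$ has $\Image U = \{x^2 \colon x \in \F\} = \Image P^2$, giving $\udeg(P^2) \le 2$. The only real obstacle is this characteristic hypothesis in property~6; everything else is routine bookkeeping around the definition of $\udeg$.
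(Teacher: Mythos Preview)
Your proof is correct and follows essentially the same approach as the paper's: each item is handled by the same elementary observation (restricting to an axis-parallel line for~1, the degree convention for~2, image containment for~3--5, and the non-surjectivity of squaring for~6). The only cosmetic difference is in property~6, where the paper first reduces to $\udeg(x^2)$ via property~3 and then argues $\udeg(x^2)=2$, whereas you argue directly that $\Image P^2$ cannot be all of $\F$; these are the same idea.
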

\begin{proof}
In order:
\begin{enumerate}
    \item It suffices to assume $P$ is reduced, as 
    $\udeg(P) = \udeg(\overline{P})$ and $\deg_i(\overline{P}) \le \deg_i(P)$.
    Let $i$ with $k:=\deg_i(P)>0$, and for concreteness assume $i=1$.
    Write $P$ as a univariate polynomial in $x_1$:
    $P=\sum_{j=0}^k x_1^j C_j(x_2,\ldots,x_n)$. 
    Since $C_k$ is both a nonzero polynomial and reduced (as $P$ is), it is nonzero as a function as well:
    $C_k(c) \neq 0$ for some $c \in \F^{n-1}$. Thus, the univariate polynomial $U(t)=P(t,c)=\sum_{j=0}^k t^j C_j(c)$ is of degree $k$. 
    Since $\Image U = \{P(t,c) \setmid t \in \F\} \sub \Image P$
    and $k>0$, we have $\udeg(P) \le k$.
    \item $\udeg(P)=\udeg(\overline{P}) \le \min_i \deg_i(P) < |\F|$ by the previous item.
    \item We may assume $F$ is non-constant. %, otherwise there is nothing to prove. 
    For $U$ witnessing $\udeg(F(\X))$, we have $\Image(U) \sub \Image(F(\X)) \sub \Image(F)$, so $\udeg(F) \le \deg(U) = \udeg(F(\X))$.
    Moreover, if $\vec{X}$ is surjective then $\Image(F) = \Image(F(\X))$, and generally, $\udeg(P)$ depends only on $\Image(P)$.
    %\item By definition. Note that the inequality can be strict (e.g., a permutation polynomial of degree at least $2$).
    \item If a univariate $U$ satisfies $\Image(U) \sub \Image(P)$,
    then $\Image(aU+b) \sub \Image(aP+b)$, so $\udeg(aP+b) \le \deg(aU+b) = \deg(U)$,
    which shows $\udeg(aP+b) \le \udeg(P)$ if we choose $U$ that witnesses $\udeg(P)$.
    The equality follows by symmetry.
    \item $\udeg(P^2) \ge \udeg(x^2)$,
    and we claim that $\udeg(x^2) = 2$.
    It is not surjective as $\gcd(2,|\F|-1) = 2$, using that $|\F|$ is odd.
    Thus, $2 \le \udeg(x^2) \le \deg(x^2) = 2$.
    Moreover, by Item~\ref{item:udeg-prop-comp}, if $P$ is surjective then $\udeg(P^2) = \udeg(x^2)$.
\end{enumerate}
\end{proof}

\begin{remark}
    Univariate degree is related to the concepts of subspace-evasive sets and variety-evasive sets over finite fields, which are sets that have small intersections with, in particular, any line and any curve of low degree, respectively. (Explicit constructions were given by Dvir, Koll\'ar, and Lovett~\cite{DL12,DKL14}.)
    In particular, if $\Image(\P)$ evades curves in the sense that
    $|\Image(\P) \cap C| < |\F|/u$ for every polynomial curve $C \sub \F^m$ of degree at most $u$, 
    then $\udeg(\P) > u$. (Indeed, otherwise there is a curve $C \sub \Image(\P)$ with $\deg(C) \le u$, so $|C| \ge |\F|/u$.)
    However, the converse does not hold---large $\udeg(\P)$ is about $\Image(\P)$ avoiding containment of entire curves, and so is a weaker property than $\Image(\P)$ avoiding any significant intersections with curves. % (one-dimensional varieties).
    %Thus, applications requiring large $\udeg(\P)$, as we give in this paper, require an even weaker condition on $\Image(\P)$ than being variety evasive.
\end{remark}

\begin{example}\label{ex:uni-graph}
    Polynomial maps of high univariate degree are not hard to come by.
    For example, the graph of any univariate polynomial is a polynomial map of maximal univariate degree. That is, over any finite field $\F$ with $|\F|>d(d-1)$, if $F$ is any univariate polynomial of degree $d \ge 2$ over $\F$, then the polynomial map $x \mapsto (x,F(x)) \colon \F \to \F^2$ has univariate degree $d$. 
    (Recall the parabola graph example from Remark~\ref{remark:cap-sets}.)
    More generally in $n$ variables, for any surjective $Q \in \Poly_k(\F)$, the $n$-variate polynomial map $\P(x)=(Q(x),F(Q(x))) \in \Poly_{kd}^2(\F)$, which has the same image as the graph of $F$, satisfies $\udeg(\P)=d$.
    The upper bound $\udeg(\P) \le d$ is easy, as $(t,F(t))$ is a polynomial curve in $\Image(\P)$, so let us show the lower bound.
    If $\U \colon \F\to\F^2$ has $\deg(\U) \le d-1$ and $\Image(\U) \sub \Image(\P)$, 
    and we write $\U=(A,B)$ for univariates $A$ and $B$, which are (reduced) polynomials as $\deg(A),\deg(B) \le \deg(\U) < d \le |\F|$. 
    As functions, $B \equiv F \circ A$, and $A$ is non-constant as otherwise $B$, and thus $\U$, would be constant.
    Since $\deg(F \circ A) \le d\deg(A) < |\F|$, $F\circ A$ is a (reduced) polynomial as well,
    so $\deg(B)=\deg(F(A)) = d\deg(A) \ge d$, a contradiction.    
    % Since $\deg(B) = d\deg(A)$, we have $\deg(A),\deg(B) < |\F|$,
    % so the above are also polynomial equalities. Thus, $\deg(A)\ge 1$ and $\deg(B) \ge d$, contradicting $\deg(B) \le \deg(\U) < d$.
    %$d \le \deg(B) \le d(d-1) < |\F|$, so the equality $B=F(A)$ is also a polynomial equality, and thus $\deg(B) \ge d$
    %\le d(d-1) < |\F|$, yet $\deg(B) \le \deg(U) < d$
\end{example}

\begin{example}
    For graphs of multivariate polynomial maps $\vec{F}$, univariate degree is already a powerful notion;
    indeed, it suffices to encode the existence of solutions to systems of polynomial equations.
    %For a polynomial map $\vec{F}$, 
    %For a graph of a polynomial map $\vec{F}$, on the other hand, univariate degree is a much more interesting quantity. 
    %In particular, this next example showcases that the notion of univariate degree is powerful enough to encode the existence of solutions to systems of polynomial equations.
    Suppose $\vec{F}$ is a homogeneous polynomial map of degree $d \ge 2$, meaning all its components $F_i$ are degree-$d$ forms, over a finite field $\F$ with $|\F|>d(d-1)$.
    The graph of $\vec{F}$ is the polynomial map $\P(x) = (x,\vec{F}(x))$. 
    We show that the homogeneous system of equations $\vec{F}(x)=\vec{0}$ has a nonzero solution $x \neq 0$ if and only if $\udeg(\P)<d$.
    
    In one direction, if there is a vector $v \neq 0$ such that $\vec{F}(v)=\vec{0}$, 
    then the line $\{(tv,0) \setmid t \in \F\}$ is contained in $\Image(\P)$, meaning $\udeg(\P)=1$ ($<d$).
    In the other direction, if a non-constant polynomial curve $t\mapsto (\A(t),\vec{F}(\A(t)))$ has degree smaller than $d$
    then, similarly to Example~\ref{ex:uni-graph}, $\vec{F}(\A)$ and $\A$ are (reduced) polynomial maps.
    If $\A(t)$ has top-degree homogeneous part $ut^e$, for a vector $u \neq 0$ and degree $1 \le e<d$, then $\vec{F}(\A(t))$ has top-degree homogeneous part $\vec{F}(u)t^{ed}$. Since $\vec{F}(u) \neq \vec{0}$ by assumption, $\deg(\vec{F}(\A)) = ed \ge d$, a contradiction.
    %. We deduce $\deg(\vec{F}(\A)) \ge d$, contradicting that the curve has degree smaller than $d$ and completing the proof.
    
    Let us finally note that in this example the univariate degree takes only two values, $\udeg(\P) \in \{1,d\}$. However, if $\vec{F}$ is not homogeneous then, depending on the existence of nonzero solutions to the homogeneous equations given by various homogeneous parts of $\vec{F}$, $\udeg(\P)$ can take all other values in $\{1,2,\ldots,d\}$ (e.g., the graph of $\vec{F}(x,y)=(x^r,y^d)$, with $r \le d$, has univariate degree $r$).
    %if $\vec{F}$ is not homogeneous, then $\udeg(\P)$ can take any values in $\{1,2,\ldots,d\}$, rather than just $\{1,d\}$.
\end{example}

\subsection{Polynomial decompositions}

First, we will need the following immediate property of weak $\e$-regular tuples.
Note that for an arbitrary map $\X \colon \F^n \to \F^r$,
if a line $\ell \sub \F^r$ in a given direction is chosen uniformly at random,
then $\Exp_\ell \Pr[\X \in \ell] = q^{-(r-1)}$.
We show that if $\X$ is weak regular, then lines that are
somewhat popular (that is, close to being as popular as a random line)
%even somewhat close to being as popular as a typical line 
must be fully contained in the image of $\X$.

\begin{lemma}\label{lemma:contain-line}
    Let $\X \sub \Forms^r(\F_q)$ be weak $\e$-regular.
    For every line $\ell \sub \F^r$ in direction $e_1$,
    if $\Pr[\X \in \ell] > \e q^{-(r-1)}$
    then $\ell \sub \Image\X$.
\end{lemma}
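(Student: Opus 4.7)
The plan is to apply the weak $\e$-regularity hypothesis directly in its geometric, line-based formulation (equation~(\ref{eq:line}) in the excerpt), which says that for every line $\ell \sub \F^r$ in direction $e_1$ and every $y \in \ell$,
\[
\big|\Pr[\X = y] - q^{-1}\Pr[\X \in \ell]\big| \le \e q^{-r}.
\]
This is exactly the reformulation used in the proof of Lemma~\ref{lemma:bias-to-reg}, and it is essentially tailor-made for the present statement.

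First I would unpack the hypothesis $\Pr[\X \in \ell] > \e q^{-(r-1)}$ by multiplying by $q^{-1}$, yielding
\[
q^{-1}\Pr[\X \in \ell] > \e q^{-r}.
\]
Then for any fixed $y \in \ell$, I would combine this strict inequality with the regularity bound via the reverse triangle inequality to conclude
\[
\Pr[\X = y] \ge q^{-1}\Pr[\X \in \ell] - \e q^{-r} > 0.
\]
Since $\Pr[\X = y] > 0$ means some $x \in \F^n$ achieves $\X(x) = y$, we get $y \in \Image\X$. Ranging over $y \in \ell$ gives $\ell \sub \Image\X$, as required.

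There isn't really a main obstacle here: once the right geometric formulation of weak $\e$-regularity is in hand, the argument is a one-line consequence. The only small bookkeeping point is to make sure the direction convention matches (lines in direction $e_1$ is exactly the case for which~(\ref{eq:line}) was derived), so I would either cite equation~(\ref{eq:line}) explicitly or briefly re-derive it by noting that any $y \in \ell$ has the form $y = (t, z')$ and hence $\Pr[\X \in \ell] = \Pr[\X' = y']$, reducing back to the probabilistic condition in Definition~\ref{def:reg}.
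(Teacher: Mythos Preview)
Your proposal is correct and matches the paper's own proof essentially line for line: the paper invokes the geometric formulation~(\ref{eq:line}) to get $\Pr[\X=y] \ge \Pr[\X \in \ell]/q - \e q^{-r} > 0$ for every $y \in \ell$, and concludes $\ell \sub \Image\X$.
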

\begin{proof}
    By the definition of weak $\e$-regularity (recall~(\ref{eq:line})),
    for every $y \in \ell$ we have
    \[\Pr[\X=y] \ge \Pr[\X \in \ell]/q - \e q^{-r} > 0.\] 
    Thus $y \in \Image\X$,
    which completes the proof.
\end{proof}

Motivated by Lemma~\ref{lemma:contain-line},
we proceed to prove results about popular lines in the image of polynomial decompositions (equivalently, of polynomials in not necessarily independent variables).
These results do not assume weak regularity.

We say that a polynomial decomposition $P = F(\X)$ is
\emph{irredundant} if there is no polynomial $F'$ with $\Supp F' \subsetneq \Supp F$ such that $F'(\X)=F(\X)$.\footnote{As usual, this is an equality of polynomials. Recall that $\Supp F$ denotes the support of $F$, meaning the set of monomials appearing with a nonzero coefficient in $F$.}
Note that the definition of a weak $\e$-regular decomposition (Definition~\ref{def:reg}) is independent of the choice of $F$, so there is no loss of generality in assuming it is irredundant, as we will later do.

First, we show that a decomposition of a polynomial $P$ into forms need not involve terms of higher degree than that of $P$.

\begin{lemma}[irredundant degree]\label{lemma:semi-hom}
    For $P \in \Poly$,
    let $P = F(\X)$, with $\X \sub \Forms$, be an irredundant decomposition.
    For every monomial $M$ in the support of $F$,
    $\deg M(\X) \le \deg P$.
%
    % Let $F\circ \X$ be an irredundant, homogeneous decomposition.
    % For every monomial $M$ in the support of $F$,
    % $\deg M(\X) \le \deg F(\X)$.
\end{lemma}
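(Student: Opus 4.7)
The plan is a direct argument by contradiction, exploiting the interaction between the homogeneity of the parts $\X$ and irredundancy of the decomposition.

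First, I would observe that since each $X_i \in \Forms$ is a form of some degree $d_i$, for any monomial $M = y_1^{a_1}\cdots y_k^{a_k}$ the composition $M(\X) = X_1^{a_1}\cdots X_k^{a_k}$ is again homogeneous, of degree $\sum_i a_i d_i = \deg M(\X)$. Hence, writing $F = \sum_M c_M M$ over its support, the decomposition $P = F(\X) = \sum_M c_M M(\X)$ is already a grouping by homogeneous parts: the degree-$j$ homogeneous component of $P$ equals
\[P^{(j)} \;=\; \sum_{M \in \Supp F \,:\, \deg M(\X) = j} c_M M(\X).\]

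Next, suppose for contradiction that some $M \in \Supp F$ satisfies $\deg M(\X) > \deg P$, and let $j^\ast = \max\{\deg M(\X) : M \in \Supp F\}$, so $j^\ast > \deg P$. Since $P$ has no homogeneous component of degree $j^\ast$, we get $P^{(j^\ast)} = 0$, i.e.
\[\sum_{M \in \Supp F \,:\, \deg M(\X) = j^\ast} c_M M(\X) \;=\; 0.\]
Now define $F' = F - \sum_{M : \deg M(\X) = j^\ast} c_M M$. By construction $\Supp F' \subsetneq \Supp F$ (the containment is strict since at least one monomial of $F$ has $\deg M(\X) = j^\ast$), while $F'(\X) = F(\X) - 0 = P$. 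This contradicts irredundancy of the decomposition, completing the argument.

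There is no real obstacle; the only subtle point is recognizing that homogeneity of $\X$ forces the decomposition $P = \sum_M c_M M(\X)$ to split cleanly by degree, so that cancellation of terms of excess degree happens stratum-by-stratum rather than across strata, which is exactly what lets us remove those terms without changing the value at $\X$.
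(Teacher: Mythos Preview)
Your proof is correct and follows essentially the same approach as the paper: both use that homogeneity of the $X_i$ makes each $M(\X)$ homogeneous, so the expansion $F(\X)=\sum_M c_M M(\X)$ splits by degree, and any stratum above $\deg P$ can be removed without changing $F(\X)$, contradicting irredundancy. The only cosmetic difference is that you remove just the top stratum $j^\ast$ while the paper removes all strata of degree exceeding $\deg P$; either yields a strictly smaller support.
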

\begin{proof}
    Since every $X_i$ is homogeneous, for every monomial $M \in \Supp(F)$, all the monomials that result from expanding $M(\X)$ are of the same degree.
    Thus, for each $k$, the degree-$k$ part of $F(\X)$ is equal to 
    $\sum M(\X)$ with the sum over all monomials $M \in \Supp(F)$ with $\deg M(\X)=k$.
    Therefore, if $F'$ is obtained from $F$ by removing from its support all monomials $M$ with $\deg M(\X)=k$, for $k>\deg F(\X)$, then we still have $F'(\X)=F(\X)$.
    We deduce that if there exists a monomial $M \in \Supp(F)$ with $\deg M(\X) > \deg F(\X)$, then $P=F(\X)$ is not an irredundant decomposition, completing the proof.
\end{proof}

\begin{corollary}\label{coro:deg-X}
    For any decomposition $P \sub \F[\X]$,
    with $P \in \Poly$ and $\X \sub \Forms$,
    if $P \nsubseteq \F[\X']$ 
    then $\deg(X_1) \le \deg(P)$.
    %Any minimal decomposition $\P \sub \F[\X]$ 
    %satisfies $\deg(\X) \le \deg(\P)$.
\end{corollary}
\begin{proof}
    Without loss of generality, $P = F(\X)$ is an irredundant decomposition.
    We have $\deg_1(F) \ge 1$, since $P \nsubseteq \F[\X']$.
    Apply Lemma~\ref{lemma:semi-hom} on a monomial $M$ in the support of $F$ with $\deg_1(M) \ge 1$. Then $\deg(X_1) \le \deg M(\X) \le \deg(P)$. %, as needed.
\end{proof}

We next prove %, using the Schwartz-Zippel lemma, 
that in the image of any polynomial map $\X$ one can find a popular line that avoids a given low-degree zero set.

\begin{lemma}[Schwartz-Zippel for decompositions]\label{lemma:S-Z}
    Let $P \in \Poly_d(\F_q)$, with $d<q$, be nonzero.
    If $P = F(\X)$ with $\X \in \Poly^k$ (and $F \in \Poly$),
    then there exists $y \in \F_q^k$ such that $F(y) \neq 0$ and 
    \[ \Pr[F\neq 0] \cdot \Pr[\X=y] \ge (1-d/q)q^{-k}.\]
\end{lemma}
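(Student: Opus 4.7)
The plan is to combine the Schwartz--Zippel bound with a simple averaging argument over the fibers of $\X$. First, since $P$ is a nonzero polynomial of total degree at most $d$ over $\F_q$, the Schwartz--Zippel lemma gives $\Pr_{x \in \F_q^n}[P(x) \neq 0] \ge 1 - d/q$. Using the decomposition $P = F(\X)$, the left-hand side equals $\Pr_x[F(\X(x)) \neq 0]$, and partitioning this event into the fibers $\{x : \X(x) = y\}$ rewrites it as $\sum_{y \in S} \Pr[\X = y]$, where $S := \{y \in \F_q^k : F(y) \neq 0\}$.

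Next, I would apply an averaging step on $S$. Under the paper's convention (stated in the preliminaries), $\Pr[F \neq 0]$ denotes the probability over a uniformly random $z \in \F_q^k$, so $|S| = q^k \cdot \Pr[F \neq 0]$. The inequality $\sum_{y \in S} \Pr[\X = y] \ge 1 - d/q$ then forces the average of $\Pr[\X = y]$ over $y \in S$ to be at least $(1 - d/q)/|S|$, so some $y \in S$ (in particular one with $F(y) \neq 0$) satisfies
\[\Pr[\X = y] \;\ge\; \frac{1 - d/q}{|S|} \;=\; \frac{1 - d/q}{q^k \, \Pr[F \neq 0]}.\]
Multiplying through by $\Pr[F \neq 0]$ gives the claimed bound $\Pr[F \neq 0] \cdot \Pr[\X = y] \ge (1 - d/q) q^{-k}$. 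The parenthetical inequality $\Pr[\X = y] \ge \Pr[F \neq 0] \cdot \Pr[\X = y]$ is then immediate from $\Pr[F \neq 0] \le 1$.

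I do not foresee any real obstacle here; the proof is just Schwartz--Zippel followed by a one-line averaging step, and uses no machinery from the rest of the paper. The only point worth care is reading $\Pr[F \neq 0]$ as a probability over $\F_q^k$ rather than over $\F_q^n$, so that the identity $|S| = q^k \Pr[F \neq 0]$ lines up with the target bound after rearrangement. A minor sanity check is that the Schwartz--Zippel bound applies to $P$ (of degree at most $d$), not to $F$, whose degree as a formal polynomial in its own variables could be much larger than $d$.
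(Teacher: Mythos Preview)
Your proof is correct and essentially identical to the paper's own argument: both apply Schwartz--Zippel to $P$, rewrite $\Pr[P\neq 0]$ as $\sum_{y\notin \Z(F)}\Pr[\X=y]$, and then use that this sum is bounded above by $|S|\cdot\max_{y\in S}\Pr[\X=y]=q^k\Pr[F\neq 0]\cdot\max_{y\in S}\Pr[\X=y]$ to extract a good $y$. Your care points about reading $\Pr[F\neq 0]$ over $\F_q^k$ and applying Schwartz--Zippel to $P$ rather than $F$ are exactly right.
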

In particular, $\Pr[\X=y] \ge (1-d/q)q^{-k}$.
Note that if $\X$ is perfectly equidistributed (e.g., independent variables), 
 we retrieve the usual Schwartz-Zippel lemma: $\Pr[F \neq 0] \ge 1-d/q$.
    \begin{proof}
        Consider $\{y \in \F^k \setmid F(y) \neq 0\}$, the complement of the zero set $\Z(F)$ of $F$.
        We have
        %\[1 - d/|\F| \le 
        \[\Pr_{x}[P(x) \neq 0]
        = \Pr_{x}[\X \notin \Z(F)]
        = \sum_{y \notin \Z(F)}\Pr[\X = y]
        \le q^k\Pr[F \neq 0]\max_{y \notin \Z(F)}\Pr[\X = y].\]
        On the other hand, as $P$ is nonzero, the Schwartz-Zippel lemma gives
        $\Pr[P \neq 0] \ge 1-d/q$.
        Combining the above, we get
        $\Pr[F \neq 0]\max_{y \notin \Z(F)}\Pr[\X = y] \ge (1-d/q)q^{-k}$,
        as desired.
\end{proof}

We combine the above lemmas to find a popular line that the decomposition nontrivially depends on.
%obtain a Schwartz-Zippel lemma for lines induced by decompositions.
Denote by $\deg_1(F)$ the degree of $F \in \F[x_1,\ldots,x_r]$ in its first variable (that is, the largest exponent of $x_1$ in the monomials in the support of $F$).

\begin{corollary}\label{coro:nonconstant-line}
    Let $P \in \Poly_d(\F_q)$ with $d<q$. %be nonzero.
    If $P = F(\X)$ with $\X \in \Forms^r$
    is an irredundant decomposition with $\deg_1 F>0$,
    then there exists a line $\ell \sub \F_q^r$ in direction $e_1$ such that
    $F|_{\ell}$ is non-constant 
    %$|F(\ell)| \neq 1$ 
    and $\Pr[\X \in \ell] > (1-d/q)q^{-(r-1)}$.
\end{corollary}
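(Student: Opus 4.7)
The plan is to apply Schwartz--Zippel to an auxiliary polynomial that detects non-constancy of $F$ on lines in direction $e_1$. I would introduce a fresh variable $x_0$ and define
\[Q(x, x_0) := F(X_1(x) + x_0,\, \X'(x)) \;-\; F(X_1(x),\, \X'(x)),\]
a polynomial in the $n+1$ variables $(x, x_0)$. For each fixed $x$, the univariate map $x_0 \mapsto Q(x, x_0)$ vanishes identically on $\F$ exactly when $F$ is constant on the line $\{(t, \X'(x)) : t \in \F\}$ in direction $e_1$.

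Two preliminary facts need checking. First, $\deg Q \le d$: expanding $(X_1(x) + x_0)^{a_1}$ in each monomial $M = y_1^{a_1} \cdots y_r^{a_r} \in \Supp F$ produces terms of total degree in $(x, x_0)$ at most $\deg M(\X)$, which by Lemma~\ref{lemma:semi-hom} is bounded by $\deg P \le d$. Second, $Q \not\equiv 0$ as a function on $\F^{n+1}$: if $Q$ vanished identically then $F(\cdot, \X'(x))$ would be constant on $\F$ for every $x$, giving $F(\X) = F_0(\X)$ with $F_0(y_1, \ldots, y_r) := F(0, y_2, \ldots, y_r)$. Since $\deg_1 F > 0$ forces $\Supp F_0 \subsetneq \Supp F$, this contradicts irredundancy.

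Schwartz--Zippel then yields $\Pr_{(x, x_0)}[Q(x, x_0) \neq 0] \ge 1 - d/q$. The key observation is that for every $x$, the slice $\{x_0 \in \F : Q(x, x_0) \neq 0\}$ has size at most $q - 1$ (because $Q(x, 0) = 0$) and is empty whenever the line through $\X(x)$ is constant for $F$. Summing over $x$ and dividing by $q - 1$ gives
\[\Pr_x\big[\,F(\cdot, \X'(x)) \text{ is non-constant on } \F\,\big] \;\ge\; \frac{q - d}{q - 1}.\]
At most $q^{r-1}$ values of $y' \in \F^{r-1}$ make $F(\cdot, y')$ non-constant, so pigeonhole produces some such $y^*$ with
\[\Pr[\X'(x) = y^*] \;\ge\; \frac{q - d}{(q - 1)\, q^{r-1}} \;>\; \frac{q - d}{q \cdot q^{r-1}} \;=\; (1 - d/q)\, q^{-(r-1)},\]
and the corresponding line $\ell = \{(t, y^*) : t \in \F\}$ satisfies both $|F(\ell)| \neq 1$ and $\Pr[\X \in \ell] > (1 - d/q) q^{-(r-1)}$.

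The main obstacle is the \emph{strict} inequality in the probability bound; naive Schwartz--Zippel plus pigeonhole only delivers $\ge (1 - d/q) q^{-(r-1)}$. The slack is recovered from the single identity $Q(x, 0) = 0$, which shrinks the per-slice count from the trivial $q$ down to $q - 1$ and contributes the factor $q/(q-1) > 1$ needed to upgrade $\ge$ to $>$.
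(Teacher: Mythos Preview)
Your proof is correct, but it takes a genuinely different route from the paper's. The paper expands $F(t,y')=\sum_{i\ge 0} t^i C_i(y')$, picks some $j\ge 1$ with $C_j(\X')\not\equiv 0$ (using irredundancy exactly as you do), observes via Lemma~\ref{lemma:semi-hom} that $\deg C_j(\X')\le d-1$, and then applies Lemma~\ref{lemma:S-Z} directly to $C_j(\X')$. This produces $y_0$ with $C_j(y_0)\neq 0$ and $\Pr[\X'=y_0]\ge (1-(d-1)/q)q^{-(r-1)}$; the strict inequality over $(1-d/q)q^{-(r-1)}$ falls out immediately from the degree drop $d\mapsto d-1$.

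Your argument instead keeps the full $F$, adjoins a fresh variable $x_0$, and runs Schwartz--Zippel on $Q$ of degree $\le d$ in $n+1$ variables; you then manufacture the needed slack from the single identity $Q(x,0)=0$, converting the per-slice bound from $q$ to $q-1$. Both approaches invoke Lemma~\ref{lemma:semi-hom} for the degree bound and both reduce irredundancy to the same $\Supp F_0\subsetneq\Supp F$ observation. The paper's route is a touch more economical (no auxiliary variable, and the strictness is automatic), whereas yours has the mild advantage of never needing to single out a particular coefficient $C_j$. One small caveat: your final strict inequality $\frac{q-d}{(q-1)q^{r-1}}>\frac{q-d}{q\cdot q^{r-1}}$ silently uses $q>d$; this is harmless in the paper's applications (where $d<\ch(\F)\le q$), but the paper's version also handles $d=q$ cleanly since $d-1<q$ still gives a positive Schwartz--Zippel bound.
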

\begin{proof}
    Put $\F=\F_q$.
    Write $F(t,\y) = \sum_{i \ge 0} t^i C_i(\y)$ with $C_i \in \Poly$, the unique representation of $F$ as a linear combination of the monomials in its support.
    We claim that the following holds for some $j \ge 1$:
    \begin{itemize}
        \item $C_j(\X')$ is a nonzero polynomial; indeed, otherwise we have $P = F(\X) = F(X_1,\X') %\sum_{i \ge 0} X_1^i C_i(\X') 
        = C_0(\X')$; moreover, $\Supp C_0 \sub \Supp F$, and $\Supp C_0 \neq \Supp F$ by the assumption $\deg_1(F) > 0$. This contradicts the assumption that $P=F(\X)$ is an irredundant decomposition.
        \item $C_j(\X')$ is of degree smaller than $d$; indeed, since $\X \sub \Forms$ and the decomposition $P=F(\X)$ is irredundant,
        we have that $\deg(C_j(\X')) < \deg(X_1^j C_j(\X')) \le \deg(P) \le d$, using Lemma~\ref{lemma:semi-hom} in the second inequality.
    \end{itemize}
    Apply Lemma~\ref{lemma:S-Z} on the nonzero $C_j(\X') \in \Poly_{d-1}$ to obtain $y_0 \in \F_q^{r-1}$ such that $C_j(y_0) \neq 0$ and $\Pr[\X' = y_0] > (1-d/q)q^{-(r-1)}$.
    Observe that $\deg(F(t,y_0)) \ge j$ ($\ge 1$), so in particular, $F(t,y_0)$ is a non-constant univariate polynomial.
    Thus, the restriction $F|_{\ell}$ of $F$ to the line $\ell = \{(t,y_0) \setmid t\in\F\} \sub \F^r$ is non-constant.
    %Then 
    %$F(\ell) = \{ F(t,y_0) \setmid t\in\F\}$ is not a singleton. 
    Moreover, observe that $\Pr[\X \in \ell] = \Pr[\X' = y_0]$. This completes the proof.
\end{proof}

Finally, we use the following easy (and tight) binomial coefficient bound.
\begin{claim}\label{claim:binomial}
    $\binom{n+t}{t} \le \frac32 n^t$ for every $n\ge 2$, $t\ge 1$.
\end{claim}
\begin{proof}
    By induction on $t$, with base case $\binom{n+1}{1}=n+1 \le \frac32 n$ as $n\ge 2$.
    For the induction step, $\binom{n+t+1}{t+1} = \binom{n+t}{t} \frac{n+t+1}{t+1} \le \frac32 n^t \cdot n$,
    using the bound $\frac{n+t+1}{t+1} \le n$, or equivalently, $1 \le t(n-1)$.
    
\end{proof}

\subsection{Proofs of the applications}

%We can now prove our main result here.

\begin{proof}[Proof of Theorem~\ref{thm:rk-bd}]
    Put $\F=\F_q$.
    Assume $\P$ is non-constant, as otherwise $\rk_t(\P) = 0$ by convention and we are done.
    Put $\e=1-d/q$.
    By Theorem~\ref{thm:WRL}
    (with $c=2$, so $q^{-(c-1)k} \le 1/q \le \e$), 
    $\P$ has a weak $\e$-regular decomposition $\P = \vec{F}(\X)$, with $\X \in \Forms^r$,
    of size $r \le (2m)^{2^{d(1+o(1))}}$.
    We will prove that $\deg(\X) \le \deg(\P)/\udeg(\P)$.
    
    We assume without loss of generality that each decomposition $P_i=F_i(\X)$ is irredundant (by removing monomials from the support of $F_i$ if necessary).
    Since $\P \sub \F[\X]$ but $\P \nsubseteq \F[\X']$, 
    there is a component $P_j$ of $\P$ such that $P_j \in \F[\X]\sm\F[\X']$,
    and therefore $\deg_1(F_j)>0$.
    %; we further assume that the decomposition $P_j = F_j(\X)$ is irredundant (by removing monomials from the support of $F_j$ if necessary).
    Apply Corollary~\ref{coro:nonconstant-line} on $P_j \in \Poly_d(\F)$ to obtain a line $\ell \sub \F^r$ in direction $e_1$ such that $F_j|_\ell$ is non-constant and $\Pr[\X \in \ell] > (1-d/q)q^{-(r-1)}$.
    Note that $F_j|_\ell$ is non-constant as a function as well, since it is a reduced polynomial, that is, $\deg(F_j|_{\ell}) < |\F|$; indeed, 
    $\deg(F_j|_{\ell}) \le \deg_1(F_j) \le \deg(P_j)/\deg(X_1)\le d < \ch(\F) \le |\F|$.
    Apply Lemma~\ref{lemma:contain-line} on $\ell$, using the weak $\e$-regularity of $\X$ and our choice of $\e$, to deduce that 
    \begin{equation}\label{eq:line-prop}
      \ell \sub \Image\X \text{ and } \vec{F}|_{\ell} \text{ is non-constant.}
      %\ell \nsubseteq \Z(\vec{F}).  
    \end{equation}
    Let $\U \colon \F \to \F^m$ be the curve %univariate %polynomial 
    $\U = \vec{F}(\vec{L})$, where $\vec{L} \colon \F\to\F^r$ satisfies $\Image(\vec{L})=\ell$.
    (Explicitly, if $\ell = \{te_1 + y \setmid t \in \F\}$, %for some $y \in \F^r$,
    then $\vec{L}(t) = te_1 + y$,  so  $\U(t)=\vec{F}(\vec{L}(t))=\vec{F}(t+y_1,y_2,\ldots,y_r)$.)
    Then $\Image\vec{U}=\vec{F}(\Image(\vec{L}))=\vec{F}(\ell)$,
    which implies that
    \[\Image\vec{U} \sub \Image\vec{P} \text{ and } \vec{U} \text{ is non-constant},\]
    since $\vec{F}(\ell) \sub \vec{F}(\Image\X) = \Image\P$ 
    %and $|\vec{F}(\ell)| \neq 1$, 
    by~(\ref{eq:line-prop}).
    Thus, by definition, $\deg(\U) \ge \udeg(\P)$.

    Let $i \in [m]$; abbreviate $P=P_i$, $F=F_i$, and $U=U_i$, 
    so that $P=F(\X)$ and $U=F(\vec{L})$.
    We claim that the following chain of inequalities holds: 
    \[\deg(U) \le \deg_1(F) \le \deg(P)/\deg(X_1) = \deg(P)/\deg(\X).\]
    %where $\deg_1(F)$ denotes the degree of $F$ as a polynomial is its first variable $x_1$ (that is, the largest exponent of $x_1$ in the monomials in the support of $F$).
    The first inequality is immediate from the construction of $U$. 
    The second inequality follows using Lemma~\ref{lemma:semi-hom}: as $P=F(\X)$, for every monomial $M=\prod_{j=1}^r x_j^{m_j(M)}$ in the support of $F$, we have
    $m_1(M)\deg(X_1) \le
    \deg(M(\X))
    \le \deg(P)$;
    %\deg(F(\X))$ ($=\deg(P)$).
    since $\deg_1(F) = \max_{M} m_1(M)$ over $M \in \Supp F$, we indeed obtain that $\deg_1(F)\deg(X_1) \le \deg(P)$.
    The final equality follows from the construction of $\X$.
    We obtain the following degree bound for $\X$, using $i$ satisfying $\deg(U_i)=\deg(\U)$:
    \[\deg(\X) \le \deg(P)/\deg(U) \le \deg(\P)/\deg(\U) \le \deg(\P)/\udeg(\P).\]
    %\[\deg(\X) \le \min_{i \in [m]} \deg(P_i)/\deg(U_i) \le d/\udeg(\P).\]

    Finally, since for every component $P_i$ of $\P$
    we have an irredundant decomposition $P_i = F_i(\X)$,
    we have from Lemma~\ref{lemma:semi-hom}
    that $\deg M(\X) \le \deg(P_i) \le d$ for every $M \in \Supp(F_i)$.
    %$\M=\bigcup_i \Supp(F_i)$ 
    Let $\mathcal{R}=\{M(\X) \setmid M \in \bigcup_i \Supp(F_i)\}$.
    %\sub \Poly_d$,
    Observe that $\mathcal{R} \sub \Poly_d$ is a set of $d/u$-reducible polynomials such that $\P \sub \Span\mathcal{R}$, 
    so by Definition~\ref{def:t-rank} we have $\rk_{d/u}(\P) \le |\mathcal{R}|$.
    Since each member of $\mathcal{R}$ is a product of at most $d$ members of $\X$, we are done (using Claim~\ref{claim:binomial}, assuming $r\ge 2$ without loss of generality):
    \[\rk_{d/u}(\P) \le \binom{r+d}{d} \le \frac32 r^d
    \le (2m)^{2^{d(1+o(1))}}. \qedhere\]
\end{proof}

We are now ready to deduce Theorems~\ref{thm:formula-ub} and~\ref{thm:top-fan-in}.

\begin{proof}[Proof of Theorem~\ref{thm:formula-ub}]
    Put $t=\lfloor d/u \rfloor$ ($\ge 1$). % and $r=\max_k \rk_t(P_k)$.
    Let $P$ be an $n$-variate polynomial with $\deg(P) \le d$ and $\udeg(P)= u$.
    Assume $P$ is non-constant, as otherwise there is nothing to prove. 
    %By Theorem~\ref{thm:rk-bd}, $r \le f(2,d)$.
    %For each component $P$ of $\P$ we have 
    Then $P = R_1+\cdots+R_{\rk_t(P)}$ %with $r=\rk_t(P)$ 
    where each $R_i \in \Poly_d$ is a product of polynomials of degree at most $t$, that is, $R_i = \prod_{j \le d_i} F_{i,j}$ with $\deg(F_{i,j}) \le t$ and $d_i \le d$, by Definition~\ref{def:t-rank}.
    Since, trivially, $L(F_{i,j}) \le O(t\binom{n+t}{t})
    \le O(d n^t)$ (Claim~\ref{claim:binomial}, assuming $n\ge 2$),
    we deduce that 
    \[L(P) \le O\big(\sum_{i,j} L(F_{i,j})\big) \le O(d^2 \cdot \rk_t(P) \cdot n^t) 
    \le n^{t+o(1)},\]
    where the last inequality uses Theorem~\ref{thm:rk-bd}, 
    so $\rk_t(P) = \rk_{d/u}(P) \le f(1,d)$,
    and the statement's assumption $d \le \frac12\log_2\log_2(n)$, to bound 
    \[d^2\rk_t(P) \le 2^{2^{d(1+o(1))}} 
    \le 2^{(\log_2(n))^{\frac12+o(1)}} 
    = n^{(\log_2(n))^{-(\frac12-o(1))}} 
    \le n^{o(1)}.\]
    
    Moreover, if $\P \colon \F^n \to \F^m$ is a (non-constant) polynomial map with $\deg(\P) \le d$ and $\udeg(\P) = u$ then, by the definition of $r=\rk_t(\P)$, each component $P_i$ is a linear combination of $t$-reducible polynomials $R_1,\ldots,R_r \in \Poly_d$. 
    Similarly to the argument above, $S(R_i) \le O(d\binom{n+t}{t}) \le O(dn^t)$. 
    We deduce that
    \[S(\P) %\le S(R_1,\ldots,R_r) + O(mr) 
    \le \sum_{i=1}^r S(R_i) + O(mr) 
    \le O(drn^t) + O(mr) 
    \le O(dm r \cdot n^t)
    \le n^{t+o(1)}\]
    where the last inequality uses Theorem~\ref{thm:rk-bd}, 
    so $r \le \rk_{d/u}(\P) \le f(m,d)$,
    and the statement's additional assumption $m \le 2^{\log^{o(1)}(n)}$,
    %$\log_2(2m) \le \log^{o(1)}(n)$, 
    to bound 
    % \[r \le f(m,d)=(2m)^{2^{d(1+o(1))}} 
    % = n^{\frac{\log(2m)}{\log(n)}2^{d(1+o(1))}} 
    % \le 2^{\log(2m)(\log_2(n))^{\frac12+o(1)}} 
    % = n^{(\log_2(n))^{-(\frac12+o(1))}} 
    % \le n^{o(1)}. \qedhere\]
    %\begin{align*}
    \[dm r \le (2m)^{2^{d(1+o(1))}} 
    = (2^{2^{d(1+o(1))}})^{\log_2(2m)} 
    \le n^{(\log_2(n))^{-(\frac12+o(1))}} 
    \le n^{o(1)}. \qedhere\]
    %\end{align*}
\end{proof}

\begin{proof}[Proof of Theorem~\ref{thm:top-fan-in}]
    Let $P \sub \F[\P]$ with $\udeg(\P) = u$.
    As in the proof of Theorem~\ref{thm:rk-bd}, using the weak regularity lemma 
    we obtain a decomposition $\P \sub \F[\X]$ with $\X \in \Forms^s$ of size $s \le (2m)^{2^{d(1+o(1))}}$ such that $\deg(\X) \le d/u$.
    Observe that $P \in \F[\P] \sub \F[\X]$.
    Using Lemma~\ref{lemma:semi-hom} %(as in the proof of Theorem~\ref{thm:rk-bd}),
    with an irredundant representation (by removing monomials if necessary), 
    it follows that we can write 
    \begin{equation}\label{eq:d4-formula}
        P = \sum_{i=1}^r c_i\prod_j Q_{i,j}
    \end{equation}
    with $Q_{i,j} \in \X$, $c_i \in \F$, $\deg(\prod_j Q_{i,j}) \le d$, and $r \le \sum_{i=0}^d s^i \le (2m)^{2^{d(1+o(1))}}$.
    %we have $P = \sum_{i=1}^r \prod_j Q_{i,j}$ with $Q_{i,j} \in \X$ with $r \le ...$.
    Note that $\deg(Q_{i,j}) \le \deg(\X) \le d/u$.
    
    We claim that we can guarantee that the number of multiplicands in each product $\prod_j Q_{i,j}$ is at most $2u$. 
    Note that we can express each product as a product of homogeneous polynomials $\prod_{j=1}^{a_i} Q'_{i,j}$ all of whose degrees, except at most one of them, satisfy $\frac12(d/u) < \deg(Q'_{i,j}) \le d/u$ (note that $Q'_{i,j}$ are not necessarily members of $\X$); indeed, this follows by repeatedly using the fact that if $\deg(Q_{i,j_1}), \deg(Q_{i,j_2}) \le \frac12(d/u)$ then $\deg(Q_{i,j_1}Q_{i,j_2}) \le d/u$.
    So, $d \ge \sum_{j=1}^{a_i} \deg(Q'_{i,j}) > (a_i-1)\frac12(d/u)$, hence $a_i \le 2u$ as claimed.
    Thus, (\ref{eq:d4-formula}) gives a $\sum^{[r]}\prod^{[2u]}\sum\prod^{[d/u]}$-formula for $P$, as needed.
    
    Finally, if $P$ is homogeneous then, since every member of $\X$ is homogeneous, without loss of generality the products in the formula satisfy $\deg(\prod_j Q_{i,j})=\deg(P)$ for every $i$, and therefore the formula is homogeneous, as desired.
\end{proof}

\section{Open Problems}\label{sec:Open}

\begin{problem}
    It remains an open problem to improve the bound for the weak regularity lemma in Theorem~\ref{thm:WRL-into} (in terms of $\e$, $m$, and $d$), or its stronger version in Theorem~\ref{thm:WRL} (in terms of $c$, $m$, and $d$).
    Any improvement here would directly translate to improvements in all applications in Section~\ref{sec:Apps}.
    Conversely, it is also open whether there is a matching lower bound on the number of parts, %\footnote{As a baseline, one can show a lower bound of $\Omega(m+\log_q(1/\e))$ (and $\Omega(m+c)$ respectively) for, e.g., the bilinear diagonal form $P = x_1x_2+\cdots+x_{2r-1}x_{2r}$.} 
    perhaps analogously to the famous matching lower bound of Gowers~\cite{Gow97} for the graph regularity lemma (see also~\cite{MS16}), or that for the weak regularity lemma (see~\cite{CF12}).
    Our proof has two main quantitative bottlenecks: the rank-regularity bound in Theorem~\ref{theo:rankular} and the rank-to-bias estimate in Theorem~\ref{theo:MZ}. It would be interesting to determine the optimal bounds for each, as well as to obtain direct lower bounds for weak regular decompositions.
    %Given the approximate equivalence between rank and bias, 
    %these questions likely boil down to, roughly, finding the best bounds for the rank-regularity lemma in Theorem~\ref{theo:rankular}.
\end{problem}

\begin{problem}
We leave it as an intriguing open problem to obtain improved upper bounds on $\rk_{d/2}(\P)$
%and obtain strong lower bounds on $\sup_\P \rk_{d/2}(\P)$, 
for polynomial maps $\P \in \Poly_d^m(\F)$ with a line-free image. 
Conversely, it would also be interesting to obtain superlinear $\omega(m)$ lower bounds on $\sup_\P \rk_{d/2}(\P)$ for such degree-$d$ $\P$,
seeing as the parabola-graph example $(x_1,\ldots,x_r,x_1^2,\ldots,x_r^2)$ from Remark~\ref{remark:cap-sets} has linearly independent components, giving $\rk_{d/2}(\P) \ge 2r = m$ for any $d \ge 2$.
In fact, even the case of quadratic maps ($d=2$) seems nontrivial; 
%a more careful inspection of our proof method gives 
using the same proof method together with the exact rank–bias relation for quadratic forms gives
the bound $\rk_1(\P) \le O(m^2)$ for line-free quadratic maps, and it would be interesting to determine the truth here.

These bounds should be independent of $n$.
For comparison, a monomial grouping argument gives the tight bound\footnote{The upper bound follows by grouping monomials according to a divisor of degree $\lceil d/2 \rceil$, and the matching lower bound follows by counting parameters.} $\rk_{d/2}(\P) \le O_d(mn^{\lceil d/2\rceil})$, for any $\P \in \Poly_d^m(\F)$ with $d\ge 2$ and $m \le O_d(n^{\lfloor d/2 \rfloor})$, so it is the assumption of a line-free image that allows for a bound independent of $n$.

These problems have a somewhat similar nature to quantitative versions of the celebrated Stillman's conjecture in commutative algebra~\cite{AH20B,CLM25}, but with a more combinatorial flavor.
A speculative, and likely difficult, conjecture here would be that, over any finite field $\F$ with $2 \le d<\ch(\F)$, any polynomial map $\P \in \Poly_d^m(\F)$ with a line-free image satisfies
$\rk_{d/2}(\P) \le (2m)^{O(d)}$. %is both true and in some sense best possible.
\end{problem}

\paragraph{Acknowledgments:} We are grateful to the anonymous referees for multiple useful comments.

\bibliographystyle{alpha}

\begin{thebibliography}{99}

    \bibitem[AH20]{AnanyanHo20}
	T.~Ananyan and M.~Hochster,
	{Small subalgebras of polynomial rings and Stillman's Conjecture}, 
	{J. Amer. Math. Soc.} \textbf{33} (2020), 291--309.

    \bibitem[AH20B]{AH20B}
	T.~Ananyan and M.~Hochster,
	{Strength conditions, small subalgebras, and Stillman bounds in degree $\leq 4$},
	{Trans. Amer. Math. Soc.} \textbf{373} (2020), 4757--4806.

    \bibitem[BGY20]{BGY20}
    P.~Beame, S.O.~Gharan, and X.~Yang,
    {On the bias of Reed-Muller codes over odd prime fields},
    {SIAM J. Discrete Math.} \textbf{34} (2020), 1232--1247.

    \bibitem[BHL12]{BHL12}
	I.~Ben-Eliezer, R.~Hod, and S.~Lovett, 
	{Random low-degree polynomials are hard to approximate},
	{Comput. Complex.} \textbf{21} (2012), 63--81.

    \bibitem[BL23]{BhowmickLo15}
	A.~Bhowmick and S.~Lovett, 
	{Bias vs structure of polynomials in large fields, and applications in information theory}, {IEEE Trans. Inf. Theory} \textbf{69} (2023), 963--977.

	\bibitem[BV10]{BogdanovVi10}
	A.~Bogdanov and E.~Viola,
	{Pseudorandom bits for polynomials},
	{SIAM J. Comput.} \textbf{39} (2010), 2464--2486.

    \bibitem[CLM25]{CLM25}
	G.~Caviglia, Y.~Liang, and C.~Meng,
	{Explicit Stillman bounds for all degrees},
    \href{https://arxiv.org/abs/arXiv:2507.19617}{arXiv:2507.19617} (2025).

    \bibitem[CM21]{CohenMo21B}
	A.~Cohen and G.~Moshkovitz,
	{Partition and analytic rank are equivalent over large fields},
	{Duke Math. J.} \textbf{172} (2023), 2433--2470.

    \bibitem[CF12]{CF12}
	D.~Conlon and J.~Fox,
	{Bounds for graph regularity and removal lemmas},
	{Geom. Funct. Anal.} \textbf{22} (2012), 1191--1256.

	% \bibitem[DDS21]{DuttaDwSa21}
	% P.~Dutta, P.~Dwivedi, and N.~Saxena,
	% {Deterministic identity testing paradigms for bounded top-fanin depth-4 circuits},
	% 36th Computational Complexity Conference (CCC 2021), 1--27,
	% \href{https://arxiv.org/abs/2304.11325}{arXiv:2304.11325} (2023).

    \bibitem[DKL14]{DKL14}
    Z.~Dvir, J.~Koll\'ar, and S.~Lovett,
    {Variety evasive sets},
    {Comput. Complex.} \textbf{23} (2014), 509--529.

    \bibitem[DL12]{DL12}
    Z.~Dvir and S.~Lovett,
    {Subspace evasive sets},
    {44th ACM Symposium on Theory of Computing (STOC 2012)}, 351--358.

    \bibitem[EHPS24]{EHPS24}
	C.~Elsholtz, Z.~Hunter, L.~Proske, and L.~Sauermann,
	{Improving Behrend’s construction: Sets without arithmetic progressions in integers and over finite fields},
	\href{https://arxiv.org/abs/arXiv:2406.12290}{arXiv:2406.12290} (2024).

    \bibitem[EL23]{EL23}
    C.~Elsholtz and G.~F.~Lipnik,
    {Exponentially larger affine and projective caps},
    {Mathematika} \textbf{69} (2023), 232--249.

    \bibitem[ESS19]{ErmanSaSn19}
	D.~Erman, S.~V.~Sam, and A.~Snowden,
	{Cubics in 10 variables vs. cubics in 1000 variables: uniformity phenomena for bounded degree polynomials},
	{Bull. Amer. Math. Soc.} \textbf{56} (2019), 87--114.

    \bibitem[FK99]{FriezeKa99}
	A.~Frieze and R.~Kannan,
	{Quick approximation to matrices and applications},
	{Combinatorica} \textbf{19} (1999), 175--220.

    \bibitem[FLZ17]{FLZ17}
	J.~Fox, L.~M.~Lov{\'a}sz, and Y.~Zhao,
	{On regularity lemmas and their algorithmic applications},
	{Combin. Probab. Comput.} \textbf{26} (2017), 481--505.

    % \bibitem[FP19]{FP19}
    % J.~Fox and H.~T.~Pham, 
    % {Popular progression differences in vector spaces II}, {Discrete Anal.} No. 16 (2019).

    \bibitem[Gow97]{Gow97}
    W.~T.~Gowers,
    {Lower bounds of tower type for Szemer\'edi's uniformity lemma},
    {Geom. Funct. Anal.} \textbf{7} (1997), 322--337.

    \bibitem[GOS25]{GOS25}
    A.~Garg, R.~Oliveira and A.~K.~Sengupta, 
    {Rank bounds and PIT for depth-4 circuits with top fan-in 3 and constant bottom fan-in via a non-linear Edelstein-Kelly theorem},
    {66th IEEE Annual Symposium on Foundations of Computer Science (FOCS 2025)}, 2143--2169. %doi: 10.1109/FOCS63196.2025.00114.

    \bibitem[Gre06]{Green06}
	B.~Green,
	{Montreal lecture notes on quadratic Fourier analysis},
    Additive Combinatorics (Montr\'{e}al 2006, ed. Granville et al.), CRM Proceedings \textbf{43}, AMS 2007, 69--102. \href{https://arxiv.org/abs/math/0604089}{arXiv:0604089}.

    \bibitem[GRS02]{GutierrezRuSe02}
    J.~Gutierrez, R.~Rubio and D.~Sevilla,
    {On multivariate rational function decomposition},
    {J. Symb. Comput.} \textbf{33} (2002), 545--562.

	  \bibitem[GT09]{GreenTao09}
	  B.~Green and T.~Tao, 
	  {The distribution of polynomials over finite fields, with applications to the
	 	Gowers norms}, 
	  {Contrib. Discrete Math.} \textbf{4} (2009), 1--36.

	\bibitem[GW11]{GowersWo11}
	W.~T.~Gowers and J.~Wolf,
	{Linear forms and higher-degree uniformity for functions on~{$\mathbb{F}^n_p$}}, 
	{Geom. Funct. Anal.} \textbf{21} (2011), 36--69.

    \bibitem[Hir98]{Hir98}
    J.~W.~P.~Hirschfeld,
    {Projective geometries over finite fields},
    {Oxford Mathematical Monographs}, 2nd ed.,
    The Clarendon Press, Oxford University Press, 1998.

    \bibitem[HL11]{HatamiLo11}
	H.~Hatami and S.~Lovett,
	{Higher-order Fourier analysis of $\mathbb{F}_p^n$ and the complexity of systems of linear forms},
	{Geom. Funct. Anal.} \textbf{21} (2011), 1331--1357.

    \bibitem[Hou15]{Hou15}
    X.~Hou,
    {Permutation polynomials over finite fields — A survey of recent advances},
    {Finite Fields Their Appl.} \textbf{32} (2015), 82--119.

    \bibitem[HT16]{HT16}
    J.~W.~P.~Hirschfeld and J.~A.~Thas,
    {General Galois geometries},
    {Springer Monographs in Mathematics}, 2nd ed.,
    Springer, London, 2016.

    \bibitem[HY11]{HY11}
	P.~Hrube\v{s} and A.~Yehudayoff,
	{Arithmetic complexity in ring extensions}, 
	{Theory Comput.} \textbf{7} (2011), 119--129.

    \bibitem[Jan20]{Janzer19}
	O.~Janzer,
	{Polynomial bound for the partition rank vs the analytic rank of tensors},
	{Discrete Anal.} \textbf{7} (2020).

    \bibitem[Kar25]{Karam23}
	T.~Karam,
	{Ranges of polynomials control degree ranks of Green and Tao over finite prime fields},
    {Proc. Edinb. Math. Soc.} (2026), 1--28.
	\href{https://arxiv.org/abs/2305.11088}{arXiv:2305.11088}. %(2023)

    \bibitem[KL08]{KaufmanLo08}
	T.~Kaufman and S.~Lovett,
	{Worst case to average case reductions for polynomials}, 
	{49th IEEE Symposium on Foundations of Computer Science (FOCS 2008)}, 166--175.

    \bibitem[KLP12]{KaufmanLoPo12}
	T.~Kaufman, S.~Lovett and E.~Porat, 
	{Weight distribution and list-decoding size of Reed–Muller codes}, 
	{IEEE Trans. Inf. Theory}, \textbf{58} (2012), 2689--2696.

    \bibitem[KSS14]{KayalSaSa14}
    N.~Kayal, C.~Saha, and R.~Saptharishi,
    {A super-polynomial lower bound for regular arithmetic formulas},
    {46th ACM Symposium on Theory of Computing (STOC 2014)}, 146--153.

    \bibitem[KLP24]{KazhdanLaPo24}
    D.~Kazhdan, A.~Lampert, and A.~Polishchuk,
    {Schmidt rank and singularities},
    {Ukr. Math. J.} \textbf{75} (2024), 1420--1442.

    % \bibitem{KazhdanZi18}
    % D.~Kazhdan and T.~Ziegler, 
    % {Approximate cohomology}, 
    % {Sel. Math.} \textbf{24} (2018), 499--509.

	% \bibitem{KoppartyMoZu20}
	% S.~Kopparty , G.~Moshkovitz, and J.~Zuiddam,
	% {Geometric rank of tensors and subrank of matrix multiplication},
	% {35th Computational Complexity Conference (CCC 2020)} \textbf{35}, 1--21,
	% %{CCC 2020} \textbf{35} (2020), 1--21.
	% and {Discrete Anal.} \textbf{1} (2023).

    \bibitem[KS14]{KumarSa14}
    M.~Kumar and S.~Saraf,
    {The limits of depth reduction for arithmetic formulas: it's all about the top fan-in},
    {46th ACM Symposium on Theory of Computing (STOC 2014)}, 136--145.

    \bibitem[Lam23]{Lampert23}
	A.~Lampert,
	{Small ideals in polynomial rings and applications},
	\href{https://arxiv.org/abs/arXiv:2309.16847}{arXiv:2309.16847} (2023).

    \bibitem[Lov11]{Lov11}
	S.~Lovett, 
	{Computing polynomials with few multiplications}, 
	{Theory Comput.} \textbf{7} (2011), 185--188.

    \bibitem[LZ24a]{LampertZi22}
	A.~Lampert and T.~Ziegler,
	{On rank in algebraic closure},
    {Sel. Math. New Ser.} \textbf{30}, 15 (2024).
	%\href{https://arxiv.org/abs/arXiv:2205.05329}{arXiv:2205.05329} (2022).

    \bibitem[LZ24b]{LampertZi24}
	A.~Lampert and T.~Ziegler,
	{Relative rank and regularization},
	{Forum Math. Sigma} \textbf{12} (2024), e29.

	\bibitem[Mil19]{Milicevic19}
	L.~Mili\'{c}evi\'{c},
	{Polynomial bound for partition rank in terms of analytic rank}, 
	{Geom. Funct. Anal.} \textbf{29} (2019), 1503--1530.

    \bibitem[MS16]{MS16}
    G.~Moshkovitz and A.~Shapira,
    {A short proof of Gowers' lower bound for the regularity lemma},
    {Combinatorica} \textbf{36} (2016), 187--194.

    \bibitem[MZ24]{MoshkovitzZh22}
    G.~{Moshkovitz} and D.~{Zhu}, 
    {Quasi-linear relation between partition and analytic rank},
    \href{https://arxiv.org/abs/2211.05780}{arXiv:2211.05780} (2024), submitted.
	
	% \bibitem{MoshkovitzZh24}
	% G.~{Moshkovitz} and D.~{Zhu}, 
	% {Uniform stability of ranks},
	% \href{https://arxiv.org/abs/2411.03412}{arXiv:2411.03412} (2024).

	\bibitem[Nas20]{Naslund20}
	E.~Naslund,
	{The partition rank of a tensor and k-right corners in $\mathbb{F}^n_q$}, 
	{J. Comb. Theory Ser. A} \textbf{174} (2020), 105190.

    \bibitem[OS24]{OS24}
    R.~Oliveira and A.~K.~Sengupta,
    {Strong algebras and radical Sylvester-Gallai configurations},
    {56th ACM Symposium on Theory of Computing (STOC 2024)}, 95--105.
    %In Proceedings of the 56th Annual ACM Symposium on Theory of Computing (STOC 2024). Association for Computing Machinery, New York, NY, USA, 95–105. https://doi.org/10.1145/3618260.3649617

    

    \bibitem[Raz10]{Raz10}
	R.~Raz,
	{Elusive functions and lower bounds for arithmetic circuits},
	{Theory Comput.} \textbf{6} (2010), 135--177.

	% \bibitem{SawinTao16}
	% W.~Sawin and T.~Tao,
	% {Notes on the ``slice rank'' of tensors},
	% \url{https://terrytao.wordpress.com/2016/08/24/notes-on-the-slice-rank-of-tensors} (2016).
    
	\bibitem[Sch85]{Schmidt85}
	W.~M.~Schmidt,
	{The density of integer points on homogeneous varieties}, 
	{Acta Math.} \textbf{154} (1985), 243--296.

    \bibitem[Sto89]{Sto89}
    H.~J.~Sto{\ss},
    {Lower bounds for the complexity of polynomials},
    {Theor. Comput. Sci.} \textbf{64} (1989), 15--23.

    \bibitem[Sze78]{Sze78}
	E.~Szemer{\'e}di,
	{Regular partitions of graphs},
	in {Probl{\`e}mes combinatoires et th{\'e}orie des graphes}
	%{Colloques Internationaux du CNRS} \textbf{260},
    (Colloq. Internat. CNRS) \textbf{260} (1978), 399--401.

	% \bibitem{Tao05}
	% T.~Tao,
	% {The dichotomy between structure and randomness, arithmetic progressions, and the primes},
	% ICM 2006 plenary lecture \url{https://doi.org/10.4171/022-1/22}, \textbf{1} (2006), 581--608, and
	% \href{https://arxiv.org/abs/math/0512114}{arXiv:0512114} (2005).
	%https://ems.press/books/standalone/20/377 ICM 2006 plenary lecture

	% \bibitem{Tao16}
	% T.~Tao,
	% {A symmetric formulation of the {C}root-{L}ev-{P}ach-{E}llenberg-{G}ijswijt capset bound},
	% \url{https://terrytao.wordpress.com/2016/05/18/a-symmetric-formulation-of-the-croot-lev-pach-ellenberg-gijswijt-capset-bound} (2016).

    \bibitem[TZ12]{TaoZi12}
	T.~Tao and T.~Ziegler,
	{The inverse conjecture for the Gowers norm over finite fields in low characteristic},
	{Ann. Comb.} \textbf{16} (2012), 121--188.

    \bibitem[VZ15]{vonZi15} 
    J.~von~zur~Gathen and K.~Ziegler,
    {Survey on counting special types of polynomials}, 
    {Computer Algebra and Polynomials, Lecture Notes in Computer Science}, \textbf{8942} (2015), Springer.
    %https://doi.org/10.1007/978-3-319-15081-9_3}





\end{thebibliography}

\end{document}